\numberwithin{equation}{section}
\theoremstyle{plain}
\newcommand{\pp}{ {\partial} }
\newcommand{\h}{\hat  }
\newcommand{\cuad}{{\sqcap\kern-.68em\sqcup}}
\newcommand{\be}{\begin{equation}}
\newcommand{\ee}{\end{equation}}
\theoremstyle{plain}
\newtheorem{theorem}{Theorem}[section]
\newtheorem{lemma}[theorem]{Lemma}
\newtheorem{prop}[theorem]{Proposition}
\newtheorem{remark}{Remark}[theorem]
\newcommand{\bremark}{\begin{remark} \em}
\newcommand{\eremark}{\end{remark} }
\title[heat equations with Neumann boundary conditions]{Finite time blow up solutions for heat equations with Neumann boundary conditions on $\mathbb{R}_{+}^{4}$}
\author[X. Fang]{Xiang Fang}
\address{\noindent School of Mathematics, Tianjin University, Tianjin 300072, P. R. China}
\email{fangx@tju.edu.cn}
\author[J. Wei]{Juncheng Wei}
\address{\noindent Department of Mathematics, Chinese University of Hong Kong, Shatin, NT, Hong Kong}
\email{wei@math.cuhk.edu.hk}
\author[Y. Zheng]{Youquan Zheng}
\address{\noindent School of Mathematics, Tianjin University, Tianjin 300072, P. R. China}
\email{zhengyq@tju.edu.cn}
\date{\today \,(Last Typeset)}
\begin{document}
\begin{abstract}
We consider the nonlinear heat equations with Neumann boundary conditions
$$
\begin{cases}
u_{t}=\Delta u & \text{in}\ \mathbb{R}_{+}^{4} \times(0, T) ,\\
-\frac{d u}{d x_{4}}(\tilde{x}, 0, t) \ =u^2(\tilde{x}, 0, t)&  \text{in}\  \mathbb{R}^{3} \times(0, T).
\end{cases}
$$
We establish the existence of a finite-time blow-up solution. Specifically, for any sufficiently small $T>0$ and any $k$ distinct points $q_{1},\dots,q_{k}\in \mathbb{R}^{3}$, there exists an initial datum $u_{0}$ such that the corresponding solution $u(x,t)$ blows up exactly at $q_{1},\dots,q_{k}$ as $t\nearrow T$.
Furthermore, when $t\nearrow T$, the solution admits the asymptotic profile
$$u(x,t)=\sum_{j=1}^{k}U_{\mu_{j}(t),\xi_{j}(t)}(x)+Z_0^*(x)+o(1)\quad \text{as}~ t\nearrow T,$$
where
$$U_{\mu_{j}(t),\xi_{j}(t)}(x):=\mu_{j}^{-1}(t) U\left(\frac{x-\xi_{j}(t)}{\mu_{j}(t)}\right),~ x\in \mathbb{R}_{+}^{4},$$
and $Z_{0}^{*}\in C_{0}^{\infty}(\mathbb{R}_{+}^{4})$ satisfying
$$Z_{0}^{*}(q_{j},0)<0\quad \text{for all}\ j=1,\dots,k.$$
Here, $U(y)$ denotes the harmonic extension to $\mathbb{R}_{+}^{4}$ of the positive radially symmetric solution $\widetilde{U}$ to the fractional Yamabe problem $(-\Delta)^{\frac{1}{2}} \widetilde{U} = \widetilde{U}^{2}$ in $\mathbb{R}^{3}$. For some constants $\beta_{j}>0$, the scaling parameters $\mu{j}(t)$ and the translation parameters $\xi_{j}(t)$ satisfy
$$\mu_{j}(t)=\beta_{j}\frac{|\log 2T|(T-t)}{|\log(T-t)|^{2}}(1 + o(1)) \to 0,~\xi_{j}(t)\to (q_{j},0)\quad \text{as} ~t\nearrow T.$$
\end{abstract}

\maketitle

{
  \hypersetup{linkcolor=}
  \tableofcontents
}

 \section{Introduction}
In this paper, we study the space-time nonlocal equation
\begin{equation}\label{1.2}
(\partial_{t}-\Delta)^{s}u=u^{\frac{n+2s}{n-2s}} \quad\text{in}\  \mathbb{R}^{n}\times(0,T).
\end{equation}
For $0<s<1$, the fractional heat operator $H^{s}u(x,t)=(\partial_{t}-\Delta)^{s}u(x,t)$ is defined for functions $u=u(x,t)$ in the parabolic H\"{o}lder space $C_{x,t}^{2s+\epsilon,s+\epsilon}(\mathbb{R}^{n+1})$ with $\epsilon>0$ by the pointwise integro-differential expression
\begin{equation}\label{1.5:2}
H^{s}u(x,t)=(\partial_{t}-\Delta)^{s}u(x,t)=\int_{0}^{\infty}\int_{\mathbb{R}^{n}}(u(x,t)-u(x-z,t-\tau))K_{s}(\tau,z)dzd\tau,
\end{equation}
where the kernel is given by
$$K_{s}(\tau,z)=\frac{1}{(4\pi)^{\frac{n}{2}}\left|\Gamma(-s)\right|}\frac{e^{-\frac{|z|^{2}}{4\tau}}}{\tau^{\frac{n}{2}+1+s}},\quad \tau>0,~z\in\mathbb{R}^{n},$$
and $\Gamma$ denotes the Gamma function. We focus specifically on the case $s=\frac{1}{2}$, $n=3$ in \eqref{1.2}.

\medskip

Problem \eqref{1.2} is a special case of the general nonlocal fractional space-time equation.
\begin{equation}\label{1.8:2}
(\partial_{t}-\Delta)^{s}u(x,t)=f(x,t),\quad 0<s<1\quad \text{in}\ \mathbb{R}^{n}\times (0,\infty).
\end{equation}
Equation \eqref{1.8:2} serves as a fundamental example of a space-time nonlocal equation with order $s$ in time and $2s$ in space, analogous to the role of the fractional Laplacian in purely spatial nonlocal problems.

\medskip

The nonlocality of the operator $(\partial_t - \Delta)^s$ rules out the direct application of classical localization techniques, such as multiplication by compactly supported test functions and integration by parts. This challenge is effectively resolved by the extension technique introduced by Stinga and Torrea \cite{extension-problem2017}. They established a regularity theory for solutions to the fractional nonlocal equation \eqref{1.8:2}. Using the parabolic semigroups method, they rigorously justified the pointwise intefro-differential formula \eqref{1.5:2} for $(\partial_{t}-\Delta)^{s}$ and derived an associated maximum principle, and characterized the nonlocal operator via a parabolic extension equation (see \cite[Theorem 1.7]{extension-problem2017}). This extension transforms the $n$-dimensional nonlocal problem into a local boundary value problem in the $(n+1)$-dimensional upper half-space.
The key advantage lies in the applicability of numerous numerical and analytical methods for partial differential equations to this extension equation, thereby enabling us to derive estimates and properties of the solution $u(x,t)$ to the original problem. Additionally, they established parabolic interior and boundary Harnack inequalities, an Almgren-type monotonicity formula, and fractional parabolic H\"{o}lder ad Schauder estimates. We refer readers to \cite{AryaBanerjeeDanielliGarofaloJFA}, \cite{AudritoTerracini}, \cite{BanerjeeDanielliGarofaloCVPDE}, \cite{BanerjeeGonzaloSire}, \cite{BanerjeeGarofalo}, \cite{BanerjeeGarofaloMuniveCCM}, \cite{HyderSegattiSire}, \cite{LaiLinRuland} and the references therein for regularity, singularity as well as unique continuity property for the fractional heat operator $(\partial_{t}-\Delta)^{s}$.

\medskip

It is worth noting that in equation \eqref{1.2}, the random jumps are coupled with the random waiting times. To address this coupling, we reformulate the fractional nonlocal problem \eqref{1.2} as a local extension problem \eqref{1.3} in one higher spatial dimension.
Indeed, $s=\frac{1}{2}$ and $n=3$, the parabolic extension technique established in \cite[Theorem 1.7]{extension-problem2017} shows that \eqref{1.2} is equivalent to
\begin{equation}\label{1.3}
\begin{cases}
u_{t}=\Delta u &  \text{ in }\  \mathbb{R}_{+}^{4} \times(0, T) ,\\
-\frac{d u}{d x_{4}}(\tilde{x}, 0, t) \ ={u}^2(\tilde{x}, 0, t)&   \text { in }\  \mathbb{R}^{3} \times(0, T) ,
\end{cases}
\end{equation}
where $x_{4}>0$ is an additional spatial variable. This corresponds to a critical heat equation with Neumann boundary conditions.

\medskip

In this paper, we establish the existence and explicit form of a finite-time blow-up solution to \eqref{1.3}. A smooth solution of \eqref{1.3} blows up at time $T$ if
$$\lim_{t\to T}\|u(\cdot,t)\|_{L^{\infty}(\mathbb{R}_{+}^{4})}=+\infty.$$
Recall that all positive solutions of the fractional Yamabe problem
$$(-\Delta)^{s} u=u^{\frac{n+2s}{n-2s}}(y)\quad \text{in}~ \mathbb{R}^{n},$$
are given by
\begin{equation}\label{Yamabe}
\widetilde{U}_{\mu_{j}(t),\tilde{\xi}_{j}(t)}=\mu_{j}^{-\frac{n-2s}{2}}(t)\widetilde{U}\left(\frac{x-\tilde{\xi}_{j}(t)}{\mu_{j}(t)}\right)
=\alpha_{n,s}\left(\frac{\mu_{j}(t)}{\mu_{j}^{2}(t)+|\tilde x-\tilde{\xi}_{j}(t)|^{2}}\right)^{\frac{n-2s}{2}},
\end{equation}
where $\alpha_{n,s}$ is a constant depending only on $n$ and $s$, $\mu_{j}(t)$ and $\tilde{\xi}_{j}(t)$ are the corresponding scaling parameters. Here, the fractional Laplace operator $(-\Delta)^{s}u(x)$ for $x\in \mathbb{R}^{n}$ is defined as
$$
(-\Delta)^{s}u(x):=C(n,s)P.V.\int_{\mathbb{R}^{n}}\frac{u(x)-u(y)}{|x-y|^{n+2s}}dy,
$$
where $C(n,s)$ is a positive normalizing constant, $P.V.$ denotes convergence in the sense of the Cauchy principal value. For further details on the fractional Laplace operator, we refer to \cite{Davila-del-Pino-Dipierro-Valdinoci2015} and \cite{DiNezza-Palatucci-Valdinoci2012}.

Substituting $s=\frac{1}{2}$ and $n=3$ into \eqref{Yamabe}, we obtain
$$\widetilde{U}_{\mu(t), \tilde{\xi}(t)}(\tilde{x}) =\mu^{-1}(t) \widetilde{U}\left(\frac{\tilde{x}-\tilde{\xi}(t)}{\mu(t)}\right)=\alpha_0 \frac{\mu(t)}{\mu^{2}(t)+|\tilde{x}-\tilde{\xi}(t)|^{2}},$$
which solves the $\frac{1}{2}$-order Yamabe problem
$$
(-\Delta)^{\frac{1}{2}} \widetilde{U}=\widetilde{U}^2  \quad\text {in} \ \mathbb{R}^{3},
$$
with $\alpha_0$ a constant. It is well known that the harmonic extension of $\widetilde{U}$ to $\mathbb{R}_{+}^{4}$ is given by
$$
U\left(\tilde{y}, y_{4}\right):=U(y)=(P_{y_{4}} * \widetilde{U})(\tilde{y}),
$$
where $P_{y_{4}}\left(\tilde{y}\right)$ is the Poisson kernel for $(-\Delta)^{\frac{1}{2}}$ in $\mathbb{R}^{3}$, defined as
\begin{equation}\label{1.4:2}
P_{y_{4}}\left(\tilde{y}\right)=\frac{\Gamma(2)}{\pi^{2}} \frac{y_{4}}{\left(|\tilde{y}|^{2}+\left|y_{4}\right|^{2}\right)^{2}}, \quad\tilde{y} \in \mathbb{R}^{3},~ y_{4}>0,
\end{equation}
and $*$ denotes the convolution operator. Moreover, $U\left(\tilde{y}, y_{4}\right)$ solves
\begin{equation*}
\begin{cases}
\Delta U=0 & \text   { in }\ \mathbb{R}_{+}^{4}  ,\\
-\frac{d U}{d y_{4}}(\tilde{y}, 0) \ =U^2(\tilde{y}, 0)& \text     { in }\ \mathbb{R}^{3}.
\end{cases}
\end{equation*}
As shown in \cite[Section 2.2]{bubble2021}, the bubble profile takes the form
\begin{align}\label{Jul26-18}
U(y):=\frac{\alpha_0}{\left|\tilde{y}\right|^2+\left(1+y_4\right)^2},\quad y\in \mathbb{R}_{+}^{4},
\end{align}
where $\alpha_0=2$. Writing $\xi(t)=(\tilde{\xi}(t),0)$ with $\tilde{\xi}(t)\in \mathbb{R}^{3}$, we define the rescaled bubble as
\begin{equation}\label{1.4}
U_{\mu(t), \xi(t)} :=\mu^{-1}(t) U\left(\frac{x-\xi(t)}{\mu(t)}\right)=\mu^{-1}(t)\frac{\alpha_0}{\left|\frac{\tilde{x}-\tilde{\xi}(t)}{\mu(t)}\right|^2+\left(1+\frac{x_4}{\mu(t)}\right)^2},\quad x=(\tilde{x},x_4)\in \mathbb{R}_{+}^{4}.
\end{equation}

\medskip

Throughout the paper, for $k$ distinct points $q_{j}\in\mathbb{R}^{3},\ j=1,\dots,k$, where $k\in \mathbb{Z}_{+}$, we assume that a function $Z_{0}^{*}\in C_{0}^{\infty}(\mathbb{R}_{+}^{4})$ is chosen such that
$$Z_{0}^{*}(q_{j},0)<0\quad \text{for all}\ j=1,\dots,k.$$
The main result is stated as follows.
\begin{theorem}\label{theorem1}
For any sufficiently small $T>0$, there exists an initial datum $u_{0}$ such that the solution $u(x,t)$ to problem \eqref{1.3} blows up at the $k$ points $q_{1},\dots,q_{k}$ as $t\nearrow T$. Specifically, the solution admits the form
\begin{equation}\label{1.5}
u(x,t)=\sum_{j=1}^{k}U_{\mu_{j}(t),\xi_{j}(t)}(x)+Z_{0}^{*}(x)+\Theta(x,t),
\end{equation}
where $U_{\mu_{j}(t),\xi_{j}(t)}(x)$ is the function defined in \eqref{1.4}, and the parameters satisfy
$$ \mu_{j}(t)\to 0,\quad \xi_{j}(t)=(\tilde{\xi}_{j}(t),0)\to \hat{q}_{j}=(q_j,0)\quad \text{as}\quad t\nearrow T.$$
Moreover, there exist constants $c>0$ and $\beta_{j}>0, j=1,\dots,k$ such that
$$\|\Theta\|_{L^{\infty}}\leq T^{c},\quad \mu_{j}(t)=\beta_{j}\frac{|\log 2T|(T-t)}{|\log(T-t)|^{2}}(1 + o(1)).$$
\end{theorem}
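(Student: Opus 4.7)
\section*{Proof proposal}

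The plan is to realize the solution as a perturbation of the multi-bubble ansatz and to close the construction by an infinite-dimensional Lyapunov--Schmidt reduction of inner/outer gluing type, in the spirit of the constructions of del Pino--Musso--Wei and their collaborators for critical heat equations. Write
\begin{equation*}
u(x,t)=U_*(x,t)+Z_0^*(x)+\varphi(x,t),\qquad U_*:=\sum_{j=1}^k U_{\mu_j(t),\xi_j(t)}(x),
\end{equation*}
and split the remainder as $\varphi=\psi+\sum_{j=1}^k \eta_j\phi_j^{\rm in}$, where $\psi$ is a globally defined \emph{outer} correction, and each $\phi_j^{\rm in}$ is an \emph{inner} correction expressed in the self-similar variables $y=(x-\xi_j)/\mu_j$, localized by a smooth cutoff $\eta_j$ supported near $\hat q_j$. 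The first step is to compute the error $S(U_*+Z_0^*)$ produced by the ansatz: the heat equation is satisfied to leading order since $\Delta U_{\mu_j,\xi_j}=0$, so the dominant error comes from $\partial_t U_{\mu_j,\xi_j}$ in the bulk and from the boundary term $-\partial_{x_4}(U_*+Z_0^*)-(U_*+Z_0^*)^2$ on $\{x_4=0\}$. Near $\hat q_j$ this boundary error, after the change of variables $y=(x-\xi_j)/\mu_j$ and time rescaling, has leading order $\mu_j^{-3}(-2U(y)Z_0^*(\hat q_j))$, which is precisely the forcing that will drive the scaling parameter to zero.

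Next I would set up the \emph{inner problem} around each bubble. In the rescaled variables, the Neumann linearization is
\begin{equation*}
\mu_j^2\partial_t\phi_j^{\rm in}=\Delta_y\phi_j^{\rm in}\inn \mathbb{R}_+^4,\qquad -\partial_{y_4}\phi_j^{\rm in}-2U(y)\phi_j^{\rm in}=h_j\onn \mathbb{R}^3\times(0,T),
\end{equation*}
with $h_j$ built from the error. The stationary linearized operator $L_0[\phi]=(-\Delta_y\phi,\, -\partial_{y_4}\phi-2U\phi)$ has kernel spanned by $Z_0=\partial_\mu U_{\mu,0}|_{\mu=1}$ and $Z_i=\partial_{\xi_i} U_{1,\xi}|_{\xi=0}$, $i=1,2,3$, by the non-degeneracy of the half-harmonic Yamabe bubble (e.g.\ the $s=1/2$ case of D\'avila--L\'opez R\'ios--Sire). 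Solvability of the inner problem requires the orthogonality conditions
\begin{equation*}
\int_{\mathbb{R}^3} h_j(\tilde y,t)\,Z_\ell(\tilde y,0)\,d\tilde y=0,\quad \ell=0,1,2,3,
\end{equation*}
and these are exactly the \emph{modulation equations} determining $\mu_j(t)$ and $\tilde\xi_j(t)$. The scaling equation has the schematic form
\begin{equation*}
c_0\,\dot\mu_j(t)\log(T-t)=-c_1\,Z_0^*(\hat q_j)\,\mu_j(t)+\text{lower order},
\end{equation*}
and its leading asymptotic analysis, using $Z_0^*(\hat q_j)<0$, produces $\mu_j(t)=\beta_j|\log 2T|(T-t)|\log(T-t)|^{-2}(1+o(1))$, while the translation modes force $\tilde\xi_j(t)\to q_j$. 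Solvability theory for the linear inner problem in weighted-in-$y$, H\"older-in-$t$ norms (parabolic Schauder for the extension problem, available by Stinga--Torrea) then yields $\phi_j^{\rm in}$ satisfying estimates of the form $|\phi_j^{\rm in}|\lesssim \mu_j^\nu\langle y\rangle^{-a}$ for appropriate $\nu,a>0$.

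For the \emph{outer problem} I would solve
\begin{equation*}
\psi_t-\Delta\psi=E^{\rm out}\inn \mathbb{R}_+^4\times(0,T),\qquad -\partial_{x_4}\psi-2U_*\psi=E^{\rm out}_{\rm bdry}\onn \mathbb{R}^3\times(0,T),
\end{equation*}
with $\psi(\cdot,0)=u_0-U_*(\cdot,0)-Z_0^*$, where the forcing comes from the parts of the error not absorbed by the inner corrections (outside the support of the $\eta_j$'s and from commutator terms). Using the heat semigroup with Neumann boundary condition and the Poisson representation of the extension operator, one obtains linear estimates in $L^\infty$-weighted norms $\|\psi\|_{*,{\rm out}}$ that decay in $t$ like a positive power of $T-t$, so that the $T^c$ bound on $\Theta$ in the statement can be read off at the end. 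The final step is a fixed-point argument: view $(\psi,\phi_1^{\rm in},\ldots,\phi_k^{\rm in},\mu_1,\tilde\xi_1,\ldots,\mu_k,\tilde\xi_k)$ as a tuple in a product weighted space, the nonlinear system as a compact perturbation of the linear theory, and apply the contraction mapping principle on a small ball centered at the leading modulation profile. Choosing $u_0:=U_*(\cdot,0)+Z_0^*+\psi(\cdot,0)$ at the fixed point furnishes the required initial datum and the decomposition \eqref{1.5}.

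The main obstacle will be the inner linear theory: unlike the classical half-Laplacian on $\mathbb{R}^3$, one must work with the extension problem on $\mathbb{R}_+^4$ with the nonlinear Neumann condition linearized against $2U$, handle the degeneracy of the time derivative $\mu_j^2\partial_t$, and produce decay estimates in $y$ that match the decay of the forcing coming from $Z_0^*$. This is where the detailed choice of weighted H\"older/$L^\infty$ norms and the verification of the four orthogonality conditions (which gives precisely the $|\log(T-t)|^{-2}$ factor) concentrate all the technical difficulty; everything else, once this is in hand, is a relatively standard inner--outer gluing scheme.
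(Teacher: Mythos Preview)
Your overall architecture (inner--outer gluing, modulation by orthogonality against the kernel $Z_0,\ldots,Z_3$, fixed point at the end) matches the paper's, but there is a genuine gap in the derivation of the scaling law, and it stems from an omission in the ansatz.

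The bulk error of the bare bubble contains the slowly decaying piece
\[
\mathcal{E}_0=-\partial_t U_{\mu,\xi}\big|_{\text{slow}}\approx -\frac{\alpha_0\dot\mu}{|x-\xi|^2+\mu^2},
\]
which is only $O(\rho^{-2})$ at spatial infinity. The paper corrects this by adding a Duhamel-type term $\Psi_0(x,t)=-\alpha_0\int_{-T}^{t}\dot\mu(s)\,k(\zeta,t-s)\,ds$ to the approximate solution \emph{before} writing the inner--outer decomposition. This is not cosmetic: when you test the boundary forcing against $Z_4$, the contribution of $\Psi_0$ produces the term $\int_{\mathbb{R}^3}2U\Psi_0 Z_4\,d\tilde y$, and it is precisely this term (together with the residual $\mathcal{R}[\mu]$ from correcting $\Psi_0$) that yields the \emph{nonlocal} reduced equation
\[
\int_{-T}^{t-\mu^2(t)}\frac{\dot\mu(s)}{t-s}\,ds \;=\; \text{const}+o(1),
\]
exactly as in the two-dimensional harmonic map flow and the four-dimensional critical heat equation. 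Your schematic equation $c_0\,\dot\mu_j\log(T-t)=-c_1 Z_0^*(\hat q_j)\mu_j$ is local and does not produce the stated rate: separating variables gives $\dot\mu/\mu\sim 1/|\log(T-t)|$, hence $\log\mu$ changes by $O((T-t)/|\log(T-t)|)$ and $\mu$ tends to a nonzero limit. Even the naive linearization $\dot\mu\,|\log(T-t)|=\text{const}$ gives only one logarithmic factor. The second log in $\mu\sim (T-t)|\log(T-t)|^{-2}$ comes from the history integral, via the identity that $\beta(t):=\int_{-T}^{t}\frac{\dot\mu(s)}{T-s}\,ds-\dot\mu(t)\log(T-t)$ is constant exactly when $\dot\mu=-c|\log(T-t)|^{-2}$.

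Two smaller points. First, your claimed leading boundary error $\mu_j^{-3}(-2U(y)Z_0^*(\hat q_j))$ has the wrong homogeneity: in outer variables the cross term is $2U_{\mu,\xi}Z_0^*=2\mu^{-1}U(y)Z_0^*$, and in the inner boundary problem this becomes $\mathcal{H}_2=2\mu U(\tilde y,0)(\Psi_0+\psi+Z^*)=O(\mu)$, not $O(\mu^{-3})$. Second, the paper closes with Schauder's fixed-point theorem rather than a contraction; the linear inner theory also needs a separate treatment of the radial mode (involving a positive eigenfunction $Z_0$ with exponential decay and a projection coefficient $c(\tau)$), which your sketch does not anticipate. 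The essential fix, though, is to include $\Psi_0$ in the ansatz and to recognize that the modulation equation for $\mu$ is an integro-differential equation, solved via the machinery of D\'avila--del Pino--Wei.
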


\medskip

In recent years, the fractional heat equation with the Sobolev critical exponent
\begin{equation}\label{1.9}
\begin{cases}
u_{t}=-(-\Delta)^{s}u+u^{\frac{n+2s}{n-2s}}&\text{in}\ \mathbb{R}^{n}\times(0,T),\\
u(\cdot,0)=u_{0}& \text{in}\ \mathbb{R}^{n}
\end{cases}
\end{equation}
for $0<s<1$ and $0<T\leq \infty$ has attracted much attention.
Extensive research has focused on equation \eqref{1.9} and related problems, see \cite{heat-kernel-estimates2010, regularity-theory2011, Caffarelli-Figalli2013,  Caffarelli-Soria-Vazquez2013,Caffarelli-Vazquez2011,Caffarelli-Vasseur2010,Chen-Wei-Zhou2020,Chen-Kim-Song2010,Felsinger-Kassmann2013, Fang-Tan2018,Fernandez-Real-Ros-Oton2016,Musso-Sire-Wei-Zheng-Zhou2019,Silvestre2012,Silvestre-2-2012} and references therein.

\medskip

When $s=1$, equation \eqref{1.9} reduces to the classical critical parabolic equation
\begin{equation}\label{1.3:2}
\begin{cases}
u_{t}=\Delta u+u^{\frac{n+2}{n-2}} & \text{in}\ \mathbb{R}^{n}\times(0,T),\\
u(\cdot,0)=u_{0}& \text{in}\ \mathbb{R}^{n}.
\end{cases}
\end{equation}
For a smooth positive initial datum $u_{0}$, problem \eqref{1.3:2} admits a unique classical solution $u(x,t)$. If $(0,T)$ is the maximum existence interval of $u(x,t)$ with $T<\infty$, then $u(x,t)$ blows up at time $T$. Finite-time blow-up is commonly classified into two types. Letting $p=\frac{n+2}{n-2}$, we define
$$\text{Type}~I:\lim\sup_{t\to T}(T-t)^{\frac{1}{p-1}}\|u(\cdot,t)\|_{L^{\infty}(\mathbb{R}^{n})}<\infty,$$
$$\text{Type}~II:\lim\sup_{t\to T}(T-t)^{\frac{1}{p-1}}\|u(\cdot,t)\|_{L^{\infty}(\mathbb{R}^{n})}=\infty.$$
Type I behaves similarly to the ODE $u_{t}=u^{p}$, where nonlinearity dominates.
Type II blow up is more intricate and challenging to analyze mathematically, owing to the delicate interplay of diffusion, nonlinearity, and domain geometry.

\medskip

Singularity formation, especially the blow-up phenomenon for $n\geq3$ in problem \eqref{1.3:2} and related problems, for instance in smooth bounded domain $\Omega\in\mathbb{R}^{n}$, has been extensively studied in \cite{Cortazar-del-Pino-Musso2020,del-Pino-Musso-Wei2019,del-Pino-Musso-Wei2020,del-Pino-Musso-Wei2021,delpino-Musso-Wei-Zhang-Zhou2020,delPino-Musso-Wei-Zhou2020,Fila-King2012,Li-Sun-Wang2022,Schweyer2012} and references therein.
It is usually expected that under suitable conditions, a blow-up solution admits the asymptotic profile
\begin{equation}\label{form}
u(x,t)\sim \sum_{j=1}^{k}\mu_{j}^{-\frac{n-2}{2}}(t)W\left(\frac{x-\xi_{j}(t)}{\mu_{j}(t)}\right),
\end{equation}
where $\mu_{j}(t)$ and $\xi_{j}(t)$ are scaling and translation parameters with $\mu_{j}(t)\to 0$ as $t\to T$, and $W$ is the positive solution of the Yamabe problem
$$
\Delta W+W^{\frac{n+2}{n-2}}=0\quad \text{in}~ \mathbb{R}^{n}.
$$
For $n\geq3$, Filippas, Herrero and Vel\'{a}zquez \cite{Filippas-Herrero-Velazquez2000} showed that Type II blow up can occur in the class of positive, radially symmetric and monotonically decreasing solutions.
Matano and Merle \cite{Matano-Merle2004} later extended this result without monotonicity assumptions.
Furthermore, in \cite{Filippas-Herrero-Velazquez2000}, the authors formally obtain a Type II blow-up to \eqref{1.3:2} for $n=3,4,5,6$ by using the matched
asymptotic expansion approach, which were rigorously confirmed in \cite{ del-Pino-Musso-Wei2019, delpino-Musso-Wei-Zhang-Zhou2020, delPino-Musso-Wei-Zhou2020, Harada2020, Harada-6D2020, Li-Sun-Wang2022,Schweyer2012}.

\medskip

On the other hand, Galaktionov and V\'{a}zquez \cite{Galaktionov-Vazquez1997} showed that any radial solution $u(x,t)$ of \eqref{1.3:2} in the unit ball $B(0,1)$ must blow up in infinite time, i.e.,
$$\lim_{t\to \infty}\|u(\cdot,t)\|_{L^{\infty}(B(0,1))}=\infty.
$$
Galaktionov and King \cite{Galaktionov-King2003} constructed positive, radially symmetric infinite-time blow up solutions in $B(0,1)$ that satisfy the asymptotic form \eqref{form} as $t\to \infty$ with $k=1$ and $\mu_{1}(t)\sim t^{-\frac{1}{n-4}}$ for $n\geq5$.
In the non-radial setting, Cort\'{a}zar del Pino and Musso \cite{Cortazar-del-Pino-Musso2020} constructed positive infinite-time blow up solutions to problem \eqref{1.3:2} for $n\geq5$.
Fila and King \cite{Fila-King2012} studied problem \eqref{1.3:2} for positive, radially symmetric $u_{0}$ with exact power decay rate, formally deriving growth and decay rates via matched asymptotics.
They further conjectured that infinite-time blow up occurs only in dimensions $3$ and $4$ (see \cite[Conjecture 1.1]{Fila-King2012})—a claim later proved rigorously for $n=3$ by del Pino, Musso and Wei \cite{del-Pino-Musso-Wei2020}.

\medskip

del Pino, Musso, Wei, and Zhou \cite{delPino-Musso-Wei-Zhou2020} establish the existence of Type II finite-time blow-up solutions for \eqref{1.3:2} in $\mathbb{R}^4$. Their result guarantees that for any prescribed finite set of points and sufficiently small blow-up time $T>0$, there exists an initial datum leading to a solution that blows up exactly at those points with the sharp rate $\|u(\cdot,t)\|_{L^{\infty}}\sim (T-t)^{-1}|\log(T-t)|^2$, the same blow-up rate as that observed for the 2-dimensional harmonic map flow \cite{Davila-del-Pino-Wei2020, del-Pino-Musso-Wei2019}. The solution exhibits a multi-bubbling profile, asymptotically described by a sum of rescaled Aubin–Talenti bubbles with scaling parameters $\mu_j(t)\sim (T-t)|\log(T-t)|^{-2}$. The construction relies on a rigorous inner–outer gluing method and deals with a nonlocal effective dynamics for the scaling parameters. Theorem \ref{theorem1} can be viewed as counterpart of \cite{delPino-Musso-Wei-Zhou2020} to problem \eqref{1.3}.

\medskip

The proof of Theorem \ref{theorem1} is based on the parabolic inner-outer gluing method, as developed in \cite{Cortazar-del-Pino-Musso2020} and \cite{Davila-del-Pino-Wei2020}. This method has proven to be a powerful tool for constructing solutions to various parabolic problems, such as singularity formation in harmonic map flow \cite{Davila-del-Pino-Wei2020,Sire-Wei-Zheng2023,Wei-Ye-Zhang-Zeng} and infinite-time blow-up for energy-critical heat equations \cite{Cortazar-del-Pino-Musso2020,del-Pino-Musso-Wei2020}, among others.
The main difficulty in studying singularity formation for energy-critical heat equations with Neumann boundary conditions is the lack of symmetry, which renders the ODE techniques inapplicable. Inspired by \cite{Sire-Wei-Zheng2023,Wei-Ye-Zhang-Zeng}, we develop a linear theory that is independent of symmetry, thus completing the proof of Theorem \ref{theorem1}.

\medskip

This paper is organized as follows. In Section \ref{2}, we construct an approximate solution by combining Aubin–Talenti bubbles with a nonlocal correction term $\Psi_0$, and carefully estimate its error. Section \ref{25-inner-outer} introduces the inner–outer gluing scheme, decomposing the problem into an inner parabolic linearized equation and an outer heat equation with Neumann conditions. In Section \ref{3}, we determine the leading-order behavior of the parameters $\mu(t)$ and $\xi(t)$ via a reduction procedure, deriving the blow-up rate $\mu(t)\sim (T-t)|\log(T-t)|^{-2}$. Section \ref{5} establishes a linear theory for the outer problem, providing weighted estimates for the heat equation. In Section \ref{linear-inner}, we develop a linear theory for the inner problem, constructing fast-decay solutions to the linearized equation around $U_{\mu,\xi}$ by combining methods from \cite{Cortazar-del-Pino-Musso2020} and \cite{Davila-del-Pino-Wei2020}, with special attention to the nonlocal structure of the operator $\sqrt{\partial_t-\Delta}$. Finally, Section \ref{6} solves the full inner–outer gluing system via a fixed-point argument, proving the existence of Type II blow-up solutions.

\medskip

{\bf Notations:}
\begin{enumerate}
\item[(1)] Throughout the paper, we use the symbol "$\lesssim$" to denote "$\leq C$" for a positive constant $C$ independent of $t$ and $T$, and $C$ may change from line to line.

\item[(2)] $f\asymp g$ means that there exist constants $c$ and $C$, such that $cg\leq f\leq Cg$.

\item[(3)] In general, for any $x=(x_{1},x_{2},x_{3},x_{4})\in\mathbb{R}_{+}^{4}$, we denote its projection onto $\mathbb{R}^{3}$ by $\tilde{x}=(x_{1},x_{2},x_{3})$.

\item[(4)] $\frac{d u}{d x_{4}}(\tilde{x}, 0, t)$ means $\frac{d u(\tilde{x},x_{4}, t)}{d x_{4}}\Big|_{(\tilde{x}, 0, t)}$.
\end{enumerate}

\section{Approximate solutions and error estimates}\label{2}
In this section, we construct an approximate solution to \eqref{1.3} and carefully estimate its residual error in both the interior and the boundary sense. For simplicity, we focus primarily on the one bubble case, modifications for multi-bubble case will be addressed at the end of this paper.

Define the error operators as
\begin{align*}
&S^{i n}(u):=-u_{t}+\Delta u \qquad\qquad\qquad\ \text { in } \ \mathbb{R}_{+}^{4} \times(0, T),\\
&S^{b}(u):=\frac{d u}{d x_{4}}(\tilde{x}, 0, t)+u^2(\tilde{x}, 0, t)\quad\text { in } \ \mathbb{R}^{3} \times(0, T).
\end{align*}
Solving system \eqref{1.3} is equivalent to finding $u$ satisfying
$$S^{i n}(u)=0\quad \text{and}\quad S^{b}(u)=0.$$

It is a well-established result that the linearized operator around the bubble
$$L_{0}[\phi]:=-(-\Delta)^{\frac{1}{2}} \phi+2 \widetilde{U}(\tilde{y}) \phi$$
is non-degenerate, in the sense that all bounded solutions to $L_{0}[\phi]=0$
in $\mathbb{R}^{3}$ are linear combinations of the functions $\widetilde{Z}_{1},\dots,\widetilde{Z}_{4}$, given by
$$\widetilde{Z}_{i}(\tilde y)=\partial_{\tilde{y}_{i}}\widetilde{U}(\tilde{y}), \quad i=1, 2, 3, \quad \widetilde{Z}_{4}(y)=\widetilde{U}(\tilde{y})+\tilde{y} \cdot \nabla \widetilde{U}(\tilde{y}).$$

We now define the extension operator $L_U$ in the half-space as follows
\begin{align}\label{Jul26-10}
L_{U}[\phi]:=
\begin{cases}
\Delta_{y}\phi  & \text{in}\ \mathbb{R}_{+}^{4} \times(0, T)  ,\\
-\frac{d\phi}{d y_{4}}(\tilde{y},0,t) -2U(\tilde{y}, 0)\phi(\tilde{y},0,t)   &\text{in}\ \mathbb{R}^{3}\times(0, T).
\end{cases}
\end{align}
By the Caffarelli-Silvestre extension technique \cite{extension2007}, the Poisson convolution
$$
Z_{i}(y)=(P_{y_{4}} * \widetilde{Z}_{i})(\tilde{y}),\quad i=1,\dots,4,
$$
with $P_{y_{4}}$ as the Poisson kernel for $(-\Delta)^{\frac{1}{2}}$, as defined in \eqref{1.4:2}, yields a basis of the bounded solution space to $L_U[\phi] = 0$.
These functions can be alternatively expressed as
 \begin{align}\label{Jul26-19}
    Z_{i}(y)=\partial_{y_{i}} U(y), \ i=1, 2, 3, \quad Z_{4}(y)= U(y)+y \cdot \nabla U(y),
 \end{align}
and they satisfy $L_U[Z_i] = 0$ in the sense of the system \eqref{Jul26-10}.

Our first approximation is the rescaled bubble
\begin{equation*}
U_{\mu(t), \xi(t)}=\mu^{-1}(t) U\left(\frac{x-\xi(t)}{\mu(t)}\right)=\mu^{-1}(t)\frac{\alpha_0}{\left|\frac{\tilde{x}-\tilde{\xi}(t)}{\mu(t)}\right|^2+\left(1+\frac{x_4}{\mu(t)}\right)^2},\quad x\in \mathbb{R}_{+}^{4},
\end{equation*}
where $\mu(t)>0$ and $\xi(t)=(\tilde{\xi}(t),0)\in \mathbb{R}_{+}^{4}$ denote scaling and translation parameters to be determined later.
Direct computations yield
\begin{align}\label{Jul26-1}
S^{in}(U_{\mu(t),\xi(t)})
&= -\partial_{t}U_{\mu(t),\xi(t)} \notag
\\&= \mu^{-2}(t)\dot{\mu}(t)\left[U(y)+y\cdot\nabla U(y)\right]+ \mu^{-2}(t)\dot{\xi}(t)\cdot\nabla U(y)  \notag
\\&= \mu^{-2}(t)\dot{\mu}(t)\left(-\frac{\alpha_{0}}{|\tilde y|^2+(1+y_4)^2} + \frac{\alpha_{0}(2+2y_4)}{\left(|\tilde y|^2+(1+y_4)^2\right)^2}\right) \notag
\\&\quad + \mu^{-2}(t)\nabla_{y}U(y)\cdot\dot{\xi}(t),
\end{align}
and
\begin{align*}
S^{b}(U_{\mu(t),\xi(t)})
=\frac{d U_{\mu(t),\xi(t)}}{d x_4}(\tilde x,0)+ U^2_{\mu(t),\xi(t)}(\tilde x,0)=0,
\end{align*}
where $\tilde{y}=\frac{\tilde{x}-\tilde{\xi}(t)}{\mu(t)}$, $y_4=\frac{x_4}{\mu(t)}$. The slow-decaying error component in \eqref{Jul26-1} is
$$
\mathcal{E}_{0} = -\frac{\alpha_{0}\dot{\mu}(t)}{\left|\tilde{x}-\tilde{\xi}(t)\right|^2+\left(\mu(t) + x_4\right)^2} \approx  -\frac{\alpha_{0}\dot{\mu}(t)}{\rho^2},
$$
where $\rho:= |x -\xi(t)|$. To refine the approximation, we introduce the correction equation
\begin{equation}\label{Jul26-2}
\partial_{t}u_{1} = \Delta u_{1} + \mathcal{E}_{0} \quad \text{in} \quad \mathbb{R}^{4} \times (0,T).
\end{equation}
Following the analysis in \cite{Davila-del-Pino-Wei2020}, an explicit solution to \eqref{Jul26-2} is given by
$$
u_{1} = -\alpha_{0}\int_{-T}^{t} \dot{\mu}(s)k(\rho,t-s)  ds,
$$
where
\begin{equation}\label{Jul26-3}
k(\rho,t) := \frac{1 - e^{-\frac{\rho^{2}}{4t}}}{\rho^{2}}.
\end{equation}
Note that $u_{1}$  is radially symmetric about $\xi(t)=(\tilde{\xi}(t),0)$, we have $u_1(\tilde{x},x_4)=u_1(\tilde{x},-x_4)$ for $x_4\geq 0$, it follows that $u_1$ solves
\begin{equation}\label{Jul26-4}
\begin{cases}
\partial_{t}u_{1} = \Delta u_{1} + \mathcal{E}_{0} &~ \text{in} \ \mathbb{R}^{4}_+ \times (0,T),
\\ -\frac{d u_1}{d x_4}=0 & \text { in }\ \mathbb{R}^{3}.
\end{cases}
\end{equation}
To further regularize $u_1$, we choose a refined correction term $\Psi_0$ to be
\begin{equation}\label{Jul26-5}
\Psi_{0}(x,t) = -\alpha_{0}\int_{-T}^{t} \dot{\mu}(s)k(\zeta(\rho,t),t-s)  ds,
\end{equation}
where
$$
\zeta(\rho,t) = \sqrt{\rho^{2} + \mu^{2}(t)}.
$$
We now analyze the error introduced by $\Psi_0$. A direct computation gives
\begin{align*}
&\partial_{t}\Psi_{0} - \Delta\Psi_{0} - \mathcal{E}_{0}
\\&= \alpha_{0}\left[\frac{\mu y \cdot \dot{\xi} - \mu(t)\dot{\mu}(t)}{\zeta}\right]
\int_{-T}^{t} \dot{\mu}(s)k_{\zeta}(\zeta,t-s)  ds
\\&\quad + \alpha_{0}\int_{-T}^{t} \dot{\mu}(s)\bigg[{-k_{t}(\zeta,t-s)} + \frac{\rho^{2}}{\zeta^{2}}k_{\zeta\zeta}(\zeta,t-s)+ \frac{3}{\zeta}k_{\zeta}(\zeta,t-s) + \frac{\mu^{2}(t)}{\zeta^{3}}k_{\zeta}(\zeta,t-s)\bigg]  ds.
\end{align*}
Using the identity $-k_t + k_{\zeta\zeta} + \frac{3}{\zeta} k_\zeta = 0$, which follows from \eqref{Jul26-3}, the expression simplifies to
\begin{align} \label{Jul26-7}
\partial_{t}\Psi_{0} - \Delta\Psi_{0} - \mathcal{E}_{0}
&= \alpha_{0}\left[\frac{y \cdot \dot{\xi} - \dot{\mu}(t)}{(1+|y|^{2})^{\frac{1}{2}}}\right]
\int_{-T}^{t} \dot{\mu}(s)k_{\zeta}(\zeta,t-s)  ds \notag
\\&\quad + \frac{\alpha_{0}}{\mu(t)(1+|y|^{2})^{3/2}}
\int_{-T}^{t} \dot{\mu}(s) \left[-\zeta k_{\zeta\zeta}(\zeta,t-s) + k_{\zeta}(\zeta,t-s)\right]  ds \notag
\\&=: \mathcal{R}[\mu].
\end{align}
The corrected approximate solution is therefore
$$
u^{*}(x,t) = U_{\mu(t),\xi(t)}(x) + \Psi_{0}(x,t)\quad \text{in} \ \mathbb{R}_{+}^{4} \times(0, T),
$$
with residual error
\begin{align*}
S^{in}(u^{*}) &= S^{in}(U_{\mu(t),\xi(t)}) - \mathcal{E}_{0}- \mathcal{R}[\mu]
=: \mathcal{K}[\mu,\xi],
\\ S^{b}(u^{*}) &=\left(U_{\mu(t),\xi(t)} + \Psi_{0}\right)^2 - U_{\mu(t),\xi(t)}^2
\end{align*}
where
\begin{equation}\label{Jul26-8}
\mathcal{K}[\mu,\xi] := \frac{\mu^{-2}(t)\dot{\mu}(t)\alpha_{0}(2+2y_4)}{\left(|\tilde y|^2+(1+y_4)^2\right)^2}+ \mu^{-2}(t)\nabla U(y) \cdot \dot{\xi}(t) - \mathcal{R}[\mu],
\end{equation}
and $\mathcal{R}[\mu]$ is as defined in \eqref{Jul26-7}.

\section{The inner-outer gluing scheme}\label{25-inner-outer}

This section introduces the inner-outer gluing scheme, which decomposes the full problem into a system of coupled equations for an inner and an outer component. We then derive the precise form of this coupled system.

We seek a solution of the form
$$
u(x,t) = u^{*}(x,t) + w \quad \text{in} \ \mathbb{R}_{+}^{4} \times(0, T),
$$
where $w$ is a small perturbation comprising inner and outer components
$$w = \varphi_{in} + \varphi_{out}, \quad
\varphi_{in} = \mu^{-1}(t)\eta_{R}\phi(y,t), \quad
\varphi_{out} = \psi(x,t) + Z^{*}(x,t).
$$
Here, the cut-off function
$$
\eta_{R(t)}(y)=\eta_{0}\left(\frac{|y|}{R(t)}\right)\quad \text{for}~R(t)>0,
$$
is defined via a smooth function $\eta_{0}:\mathbb{R}\to [0,1]$ satisfying
\begin{equation}\label{estimate2.6}
\eta_{0}(s)=
\begin{cases}1, & s<1, \\
0, & s>2 .
\end{cases}\end{equation}
And $Z^{*}(x,t)$ is the unique solution to the initial-boundary value problem
\begin{equation*}
\begin{cases}
Z_{t}^{*}=\Delta Z^{*} &  \text { in } \  \mathbb{R}_{+}^{4} \times(0, T), \\
-\frac{d Z^{*}}{d x_{4}}(\tilde{x}, 0,t)=0 &  \text { in }\  \mathbb{R}^{3} \times(0, T), \\
Z^{*}(x, 0)=Z_{0}^{*}(x) &  \text { in } \ \mathbb{R}_{+}^{4}.
\end{cases}
\end{equation*}
Direct computation yields
\begin{equation}\label{error1}
\begin{aligned}
S^{in}\left(u^*+w\right)
&=\mathcal{K}[\mu,\xi]-\mu^{-2}\dot{\mu}\eta_R\phi-\mu^{-1}\phi\partial_t\eta_R-\mu^{-1}\eta_R\phi_t+\mu^{-2}\eta_R\nabla_y\phi\left(\dot{\xi}+\dot{\mu}y\right)
\\&\quad+\mu^{-3}\phi\Delta_y\eta_R+\mu^{-3}\eta_R\Delta\phi+2\mu^{-3}\nabla_y\phi\nabla_y\eta_R-\psi_t+\Delta\psi,
\end{aligned}
\end{equation}
and
\begin{equation}\label{error2}
\begin{aligned}
S^{b}\left(u^*+w\right)
&=\mu^{-2}\eta_{R}\frac{d \phi}{d y_{4}}(\tilde{y},0, t)+\mu^{-2}\phi(\tilde{y},0,t)\frac{d\eta_{R}}{d y_{4}}(\tilde{y},0,t)+\frac{d \psi}{d x_{4}}(\tilde{x},0,t)
\\&\quad+2\mu^{-2}U(\tilde y,0)\eta_R\phi(\tilde y,0,t)+2\mu^{-1}U(\tilde y,0)\left(\Psi_0+\psi+Z^*\right)(\tilde x,0,t).
\end{aligned}
\end{equation}
Thus, $u= u^{*}+ w$ solves the original equation  \eqref{1.3} if $(\phi(y, t),\psi(x,t))$ satisfies the following inner-outer gluing system
\begin{equation}\label{Jul-inner-problem}
\begin{cases}
\mu^{2} \phi_{t}=\Delta_{y} \phi+\mathcal{H}_1[\phi,\psi,\mu,\xi] &\text { in } \ B_{2R}^{+} \times(0, T),  \\
-\frac{d \phi}{d y_{4}}(\tilde{y}, 0, t)=2U(\tilde{y}, 0) \phi(\tilde{y}, 0, t)+\mathcal{H}_2[\phi,\psi, \mu, \xi]&\text { in }\ (\partial B_{2R}^{+}\cap\mathbb{R}^{3}) \times(0, T),
\end{cases}
\end{equation}
and
\begin{equation}\label{Jul-outer-problem}
\begin{cases}
\psi_{t}=\Delta_{x} \psi+ \mathcal{G}_1\left[\phi, \psi, \mu, \xi\right]& \text { in }\ \mathbb{R}_{+}^{4} \times(0, T), \\
 -\frac{d \psi}{d x_{4}}\left(\tilde{x}, 0, t\right)=\mathcal{G}_2\left[\phi, \psi, \mu, \xi\right]
& \text { in }\ \mathbb{R}^{3} \times(0, T),
\end{cases}
\end{equation}
where
\begin{align*}
B_{2 R}^{+}&:=\left\{y=(\tilde{y},y_{4})\mid \tilde{y}\in \mathbb{R}^{3},\ y_{4}\geq 0,\ |y|\leq 2R(t)\right\} ,
\\\mathcal{H}_1[\phi,\psi,\mu,\xi](y, t)&:=\mu\left[\dot{\mu}\left(\nabla_y\phi\cdot y+\phi\right)+\nabla_y\phi\cdot \dot{\xi}\right]+\mu^3\mathcal{K}[\mu,\xi],
\\\mathcal{H}_2[\phi,\psi,\mu,\xi](\tilde y,0,t)&:=2\mu U(\tilde y,0)\left(\Psi_0+\psi+Z^*\right)(\tilde x,0,t),
\end{align*}
and
\begin{align*}
\mathcal{G}_1\left[\phi,\psi,\mu,\xi\right](y,t):=&~\mu^{-3}\left[\phi\Delta_y\eta_R+2\nabla_y\phi\nabla_y\eta_R-\mu^{2}\phi\partial_t\eta_R\right]+(1-\eta_R)\mathcal{K}[\mu,\xi],
\\\mathcal{G}_2\left[\phi,\psi,\mu,\xi\right](\tilde{y},0,t):=&~2\mu^{-1}(1-\eta_R)U(\tilde y,0)\left(\Psi_0+\psi+Z^*\right)(\tilde x,0,t)
\\&~+\mu^{-2}\phi(\tilde{y},0,t)\frac{d\eta_{R}}{d y_{4}}(\tilde{y},0,t).
\end{align*}

\section{Formal derivation of $\mu_{0}(t)$ and $\xi_{0} (t)$ }\label{3}

In this section, we formally derive the leading-order behavior of the scaling parameter $\mu(t)$ and the translation parameter $\xi(t)$ as $t\nearrow T$. This is achieved by imposing orthogonality conditions against the kernel of the linearized operator.

At this point, we consider the linear system
\begin{equation}\label{Jul26-9}
\begin{cases}
\Delta_{y}\phi=-\mathcal{H}_1[\phi,\psi,\mu,\xi](y, t) &\text{in}\ \mathbb{R}_{+}^{4} \times(0, T)  ,\\
-\frac{d\phi}{d y_{4}}(\tilde{y},0,t) -2U(\tilde{y}, 0)\phi(\tilde{y},0,t) =\mathcal{H}_2[\phi,\psi,\mu,\xi](\tilde{y},0,t) &  \text{in} \ \mathbb{R}^{3}\times(0, T).
\end{cases}
 \end{equation}
Testing equation \eqref{Jul26-9} against the kernel functions $Z_{i}(y)$ and integrating over $\mathbb{R}{+}^{4}$ gives
\begin{align*}
0=&\int_{\mathbb{R}_{+}^{4}}\left(\Delta \phi+\mathcal{H}_1[\phi,\psi,\mu,\xi](y, t)\right) Z_{i}(y) d y
\\=&\int_{\mathbb{R}_{+}^{4}} \phi \Delta Z_{i}(y) d y+\int_{\mathbb{R}^{3}} Z_{i} \frac{\partial \phi}{\partial \nu} d \tilde{y}-\int_{\mathbb{R}^{3}} \phi \frac{\partial Z_{i}}{\partial \nu} d \tilde{y}+\int_{\mathbb{R}_{+}^{4}} \mathcal{H}_1[\phi,\psi,\mu,\xi](y, t) Z_{i}(y)dy
\\=&\int_{\mathbb{R}_{+}^{4}} \mathcal{H}_1[\phi,\psi,\mu,\xi](y, t) Z_{i}(y) d y+\int_{\mathbb{R}^{3}} \mathcal{H}_2[\phi,\psi,\mu,\xi](\tilde{y},0,t)Z_{i}(\tilde{y}, 0) d \tilde{y}.
\end{align*}
Therefore, the inner problem must satisfy the orthogonality conditions
\begin{align}\label{Oct9-1}
\int_{\mathbb{R}_{+}^{4}} \mathcal{H}_1[\phi,\psi,\mu,\xi](y, t) Z_{i}(y) d y+\int_{\mathbb{R}^{3}} \mathcal{H}_2[\phi,\psi,\mu,\xi](\tilde{y},0,t)Z_{i}(\tilde{y}, 0) d \tilde{y}=0,\quad i=1,\dots,4,
\end{align}
where $Z_i$ are the kernel functions of the linearized operator $L_U$ defined in \eqref{Jul26-10}.

We consider functions $\xi(t)=(\tilde{\xi}(t),0) \to \hat{q}=(q,0)$ and parameters $\mu(t) \to 0$ as $t \nearrow T$, decomposed as
\begin{equation}\label{error3:2}
\mu(t)=\mu_{0}(t)+\mu_{1}(t),\quad \xi(t)=\xi_{0}(t)+\xi^{1}(t),
\end{equation}
where $\mu_{0}(t)$ and $\xi_{0}(t)$ are the leading-order terms of $\mu(t)$ and $\xi(t)$, respectively, while $\mu_{1}(t)$ and $\xi^{1}(t)$ are lower-order remainders satisfying $\mu_{1}(t)\ll\mu_{0}(t)$ and $\xi^{1}(t)\ll\xi_{0}(t)$.
To determine the leading-order parameters $\mu_0(t)$ and $\xi_0(t)$ from \eqref{Oct9-1}, we extract the dominant terms in \eqref{Jul-inner-problem},
\begin{equation}\label{Jul-derivation1}
\begin{cases}
\mu_{0}^{2} \phi_{t}=\Delta_{y} \phi+\mathcal{H}_1^*[\mu,\xi]  &\text{ in } \  B_{2R}^{+} \times(0, T),  \\
-\frac{d \phi}{d y_4}(\tilde{y}, 0, t)=pU^{p-1}(\tilde{y}, 0) \phi(\tilde{y}, 0, t)+\mathcal{H}_2^*[\mu,\xi,\psi,Z^*]  & \text{ in }\ (\partial B_{2R}^{+}\cap\mathbb{R}^{3}) \times(0, T)
\end{cases}
\end{equation}
with
\begin{align*}
\mathcal{H}_1^*[\mu,\xi](y, t):=&~
\frac{\mu(t)\dot{\mu}(t)\alpha_{0}(2+2y_4)}{\left(1+|y|^2\right)^2}+ \mu(t)\nabla U(y) \cdot \dot{\xi}(t) -\mu^3(t)\mathcal{R}[\mu],
 \\
\mathcal{H}_2^*[\mu,\xi,\psi,Z^*](\tilde y,0,t):=&~2\mu U(\tilde y,0)\left(\Psi_0+\psi+Z^*\right)(\tilde x,0,t).
\end{align*}
The contributions from the residual terms $\mathcal{H}_i-\mathcal{H}_i^*$, ($i=1,2$) to the orthogonality conditions are negligible compared to those of the leading-order terms $\mathcal{H}_i$.

We thus approximate solutions to \eqref{Jul-derivation1} by requiring that $\mu_0(t)$ and $\xi_0(t)$ to satisfy
\begin{equation}\label{Jul-orthogonality}
\int_{\mathbb{R}_{+}^{4}} \mathcal{H}_1^*[\mu,\xi](y, t) Z_{i}(y) d y+\int_{\mathbb{R}^{3}}\mathcal{H}_2^*[\mu,\xi,\psi,Z^*](\tilde y,0,t) Z_{i}(\tilde{y}, 0)  d \tilde{y}=0, \quad i=1, \dots, 4.
\end{equation}

For $i=1,2, 3$, \eqref{Jul-orthogonality} imply
$$\dot{\xi}_0(t)=o(1),$$
where $o(1)\to0$ as $t\nearrow T$. Hence,
\begin{equation}\label{solution2}
\xi_{0}(t)=\hat{q}+o(1)\vec{v}
\end{equation}
for some vector $\vec{v}$, where $\hat{q}=(q,0)$ is a prescribed point in $\mathbb{R}_{+}^{4}$.

For $i=4$, we expand $Z^*(\mu y+\xi,t)$ and $\psi(\mu y+\xi,t)$ at the point $\hat{q}$,
$$Z^*(\mu y+\xi,t)=Z_0^*(\hat{q})+o(1),\quad \psi(\mu y+\xi,t)=\psi(\hat{q},0)+o(1).$$
Substituting into the orthogonality condition
\begin{equation*}
\int_{\mathbb{R}_{+}^{4}} \mathcal{H}_1^*[\mu,\xi](y, t) Z_{4}(y) d y+\int_{\mathbb{R}^{3}}\mathcal{H}_2^*[\mu,\xi,\psi,Z^*](\tilde y,0,t) Z_{4}(\tilde{y}, 0)  d \tilde{y}=0,
\end{equation*}
we obtain
\begin{align}\label{Jul26-14}
&\int_{\mathbb{R}_{+}^{4}} \left[\frac{\dot{\mu}(t)\alpha_{0}(2+2y_4)}{\left(1+|y|^2\right)^2}+ \nabla U(y) \cdot \dot{\xi}(t) -\mu^2(t)\mathcal{R}[\mu]\right] Z_{4}(y) d y \notag
\\&+\int_{\mathbb{R}^{3}}2U(\tilde y,0)\left[\Psi_0(x,t)+\psi(\hat{q},0)+Z_0^*(\hat{q})\right]Z_{4}(\tilde{y},0)  d\tilde{y}+o(1)=0.
\end{align}
We first calculate
\begin{align}\label{Jul26-15}
\int_{\mathbb{R}_{+}^{4}} \left[\frac{\dot{\mu}(t)\alpha_{0}(2+2y_4)}{\left(1+|y|^2\right)^2}+ \nabla U(y) \cdot \dot{\xi}(t)\right] Z_4(y) d y
=\alpha_0\dot{\mu}(t)\int_{\mathbb{R}_{+}^{4}}\frac{2+2y_4}{\left(1+|y|^2\right)^2} Z_{4}(y) d y.
\end{align}
Next, we examine the $\mathcal{R}[\mu]$ term
\begin{align*}
&\int_{\mathbb{R}_{+}^{4}}\mathcal{R}[\mu]Z_{4}(y) dy
\\&= -\alpha_{0}\dot{\mu}(t)\int_{\mathbb{R}^{4}_+}\frac{Z_{4}(y)}{(1+|y|^{2})^{\frac{1}{2}}}
\int_{-T}^{t}\dot{\mu}(s)k_{\zeta}(\zeta,t-s)ds dy
\\&\quad+\frac{\alpha_{0}}{\mu(t)}\int_{\mathbb{R}^{4}_+}\frac{Z_{4}(y)}{(1+|y|^{2})^{3/2}}
\int_{-T}^{t}\dot{\mu}(s)[k_{\zeta}(\zeta,t-s)-\zeta k_{\zeta\zeta}(\zeta,t-s)]dsdy
\end{align*}
Let
\begin{equation}\label{Jul26-11}
\Upsilon = \frac{\zeta^{2}}{t-s} = \frac{\mu^{2}(t)(1+|y|^{2})}{t-s}, \quad
\tau = \frac{\mu^{2}(t)}{t-s}
\end{equation}
and define $K(\Upsilon) = \frac{1-e^{-\frac{\Upsilon}{4}}}{\Upsilon}$. From \eqref{Jul26-3}, we derive
\begin{align*}
k_{\zeta}(\zeta,t-s) - \zeta k_{\zeta\zeta}(\zeta,t-s)
&= -4\left(\frac{\Upsilon}{t-s}\right)^{3/2}K_{\Upsilon\Upsilon}(\Upsilon)
\end{align*}
and also
\begin{align*}
k_{\zeta}(\zeta,t-s)
= -\frac{2}{\zeta^{3}} + \frac{e^{-\frac{\zeta^{2}}{4(t-s)}}}{2\zeta(t-s)} + \frac{2e^{-\frac{\zeta^{2}}{4(t-s)}}}{\zeta^{3}}
= \frac{2\sqrt{\Upsilon}}{(t-s)^{3/2}}K_{\Upsilon}(\Upsilon).
\end{align*}
Substituting these yields
\begin{align} \label{Jul26-12}
\int_{\mathbb{R}^{4}}\mathcal{R}[\mu]Z_{4}(y)dy
&=-\frac{2\alpha_{0}\dot{\mu}(t)}{\mu(t)}\int_{\mathbb{R}^{4}_+}\frac{Z_{4}(y)}{1+|y|^{2}}
\left(\int_{-T}^{t}\frac{\dot{\mu}(s)}{t-s}\Upsilon K_{\Upsilon}(\Upsilon)ds\right)dy \notag \\
&\quad  -\frac{4\alpha_{0}}{\mu^{2}(t)}\int_{\mathbb{R}^{4}_+}\frac{Z_{4}(y)}{(1+|y|^{2})^{2}}
\left(\int_{-T}^{t}\frac{\dot{\mu}(s)}{t-s}\Upsilon^{2}K_{\Upsilon\Upsilon}(\Upsilon)ds\right)dy.
\end{align}

On the other hand, from \eqref{Jul26-5} and \eqref{Jul26-11}, we find
\begin{align}\label{Jul26-13}
\int_{\mathbb{R}^{3}}2U(\tilde y,0)\Psi_0(\tilde{x},0,t)Z_{4}(\tilde{y},0)  d\tilde{y}
= -2\alpha_{0}\int_{\mathbb{R}^{3}}U^{2}(\tilde y,0)Z_{4}(\tilde y,0)
\int_{-T}^{t}\frac{\dot{\mu}(s)}{t-s}K(\Upsilon)dsd\tilde{y}.
\end{align}
Combining \eqref{Jul26-14}, \eqref{Jul26-15} \eqref{Jul26-12} and \eqref{Jul26-13} yields
\begin{align}\label{Jul26-16}
&\alpha_0\dot{\mu}(t)\int_{\mathbb{R}_{+}^{4}}\frac{2+2y_4}{\left(1+|y|^2\right)^2} Z_{4}(y) d y
+2\alpha_{0}\dot{\mu}(t)\mu(t)\int_{\mathbb{R}^{4}_+}\frac{Z_{4}(y)}{1+|y|^{2}}
\left(\int_{-T}^{t}\frac{\dot{\mu}(s)}{t-s}\Upsilon K_{\Upsilon}(\Upsilon)ds\right)dy \notag
\\& +4\alpha_{0}\int_{\mathbb{R}^{4}_+}\frac{Z_{4}(y)}{(1+|y|^{2})^{2}}
\left(\int_{-T}^{t}\frac{\dot{\mu}(s)}{t-s}\Upsilon^{2}K_{\Upsilon\Upsilon}(\Upsilon)ds\right)dy \notag
\\&-2\alpha_{0}\int_{\mathbb{R}^{3}}U^{2}(\tilde y,0)Z_{4}(\tilde y,0)
\int_{-T}^{t}\frac{\dot{\mu}(s)}{t-s}K(\Upsilon)dsd\tilde{y}\notag
\\&+2\left[\psi(\hat{q},0)+Z_0^*(\hat{q})\right]\int_{\mathbb{R}^{3}}U(\tilde y,0)Z_{4}(\tilde{y},0)  d\tilde{y}+o(1)=0.
\end{align}

For the construction of a blow-up solution, the scaling parameter $\mu(t)$ must tend to $0$ as $t \nearrow T$. We therefore impose the condition
$$
\dot{\mu}(t) = o(1) \quad \text{as} \quad t\nearrow T.
$$
Thus, \eqref{Jul26-16} simplifies to
\begin{align}\label{Jul26-17}
&4\alpha_{0}\int_{\mathbb{R}^{4}_+}\frac{Z_{4}(y)}{(1+|y|^{2})^{2}}
\left(\int_{-T}^{t}\frac{\dot{\mu}(s)}{t-s}\Upsilon^{2}K_{\Upsilon\Upsilon}(\Upsilon)ds\right)dy \notag
\\&-2\alpha_{0}\int_{\mathbb{R}^{3}}U^{2}(\tilde y,0)Z_{4}(\tilde y,0)
\int_{-T}^{t}\frac{\dot{\mu}(s)}{t-s}K(\Upsilon)dsd\tilde{y} \notag
\\&= -2\left[\psi(\hat{q},0)+Z_0^*(\hat{q})\right]\int_{\mathbb{R}^{3}}U(\tilde y,0)Z_{4}(\tilde{y},0)  d\tilde{y}+o(1).
\end{align}
Now define the function
\begin{align*}
\Gamma(\tau) :=4\alpha_{0}\int_{\mathbb{R}^{4}_+}\frac{Z_{4}(y)}{(1+|y|^{2})^{2}}
\Upsilon^{2}K_{\Upsilon\Upsilon}(\Upsilon)dy
-2\alpha_{0}\int_{\mathbb{R}^{3}}U^{2}(\tilde y,0)Z_{4}(\tilde y,0)
K(\Upsilon)d\tilde{y},
\end{align*}
where $\Upsilon = \tau(1+|y|^{2})$. Then \eqref{Jul26-17} becomes
\begin{align}\label{Oct8-1}
\int_{-T}^{t}\frac{\dot{\mu}(s)}{t-s}\Gamma\left(\frac{\mu^{2}(t)}{t-s}\right)ds= -2\left[\psi(\hat{q},0)+Z_0^*(\hat{q})\right]\int_{\mathbb{R}^{3}}U(\tilde y,0)Z_{4}(\tilde{y},0)  d\tilde{y}+o(1).
\end{align}

Using the explicit forms of $U(\tilde{y},0)$ and $Z_{4}(\tilde{y},0)$ from \eqref{Jul26-18} and \eqref{Jul26-19}, we compute the asymptotic behavior
$$
\Gamma(\tau) = \begin{cases}
c_{*} + O\left(\tau\right) & \text{for } \tau < 1, \\
O\left(\frac{1}{\tau}\right)  & \text{for } \tau > 1,
\end{cases}
$$
with $c_{*}>0$ is a constant. Thus, \eqref{Oct8-1} reduces to
\begin{equation}\label{Jul26-20}
c_{*}\int_{-T}^{t-\mu^{2}(t)}\frac{\dot{\mu}(s)}{t-s}ds
= -2c_{0}[Z^{*}_{0}(\hat{q}) + \psi(\hat{q},0)] + o(1),
\end{equation}
where
$$
c_{0} := \int_{\mathbb{R}^{3}}U(\tilde{y},0)Z_{4}(\tilde{y},0)d\tilde{y} < 0.
$$
Since $\mu(t)$ decreases to $0$ as $t\nearrow T$, we require
$$
a_{*} := Z^{*}_{0}(\hat{q}) + \psi(\hat{q},0) < 0.
$$

A natural choice for the leading-order behavior of $\mu(t)$ is given by
\begin{equation}\label{Jul26-21}
\dot{\mu}(t) = -\frac{c}{|\log(T-t)|^{2}},
\end{equation}
where $c>0$ is a constant  that will be determined below. Indeed, substituting into the integral yields
\begin{align*}
\int_{-T}^{t-\mu^{2}(t)}\frac{\dot{\mu}(s)}{t-s}ds
&=\int_{-T}^{t-(T-t)}\frac{\dot{\mu}(s)}{t-s}ds
+\int_{t-(T-t)}^{t-\mu^{2}(t)}\frac{\dot{\mu}(t)}{t-s}ds
-\int_{t-(T-t)}^{t-\mu^{2}(t)}\frac{\dot{\mu}(t) - \dot{\mu}(s)}{t-s}ds
\\&= \int_{-T}^{t-(T-t)}\frac{\dot{\mu}(s)}{t-s}ds
   +\dot{\mu}(t)(\log(T-t) - 2\log\mu(t))
\\&\quad-\int_{t-(T-t)}^{t-\mu^{2}(t)}\frac{\dot{\mu}(t) - \dot{\mu}(s)}{t-s}ds
\\&= \int_{-T}^{t}\frac{\dot{\mu}(s)}{T-s}ds - \dot{\mu}(t)\log(T-t)+o(1).
\end{align*}
Define $\beta(t):=\int_{-T}^{t}\frac{\dot{\mu}(s)}{T-s}ds - \dot{\mu}(t)\log(T-t)$. From \eqref{Jul26-21}, we compute
$$
\log(T-t)\frac{d\beta}{dt}(t) = \frac{d}{dt}\left(-\log^{2}(T-t)\dot{\mu}(t)\right) = 0,
$$
which implies $\beta(t)$ is a constant. Thus, \eqref{Jul26-20} can be approximately solved by
\eqref{Jul26-21} with $c$ chosen as
$$-c\int_{-T}^{T}\frac{ds}{(T-s)|\log(T-s)|^{2}} = -\frac{3c_{0}\,a_{*}}{c_{*}}:=\kappa_{*}.$$
At main order, we thus obtain
$$\dot{\mu}(t) = \kappa_{*}\dot{\mu}_{0}(t)\quad \text{with}\quad \dot{\mu}_{0}(t) = -\frac{|\log 2T|}{|\log(T-t)|^{2}}.$$
Imposing the condition $\mu_{0}(T) = 0$, we obtain
\begin{align}\label{Aug1-1}
\mu_{0}(t) = \frac{|\log 2T|(T-t)}{|\log(T-t)|^{2}}(1 + o(1)) \quad \text{as} \quad t\nearrow T.
\end{align}

\section{The linear theory of the outer problem}\label{5}
In this section, we establish a priori estimates of the outer problem. We first consider the corresponding linear problem
\begin{equation}\label{outer1}
\begin{cases}
\psi_{t}=\Delta \psi+g_{1}(x,t)& \text{in}~ \mathbb{R}_{+}^4\times (0,T),\\
-\frac{d\psi}{dx_4}(\tilde{x},0,t)=g_{2}(\tilde{x},t) &\text{in}~  \mathbb{R}^3\times (0,T).
\end{cases}
\end{equation}
We decompose problem (\ref{outer1}) into
\begin{equation}\label{outer19}
\begin{cases}
\psi_{t}=\Delta\psi+g_{1}(x,t)& \text{in}~ \mathbb{R}_{+}^4\times (0,T),\\
-\frac{d\psi}{dx_4}(\tilde{x},0,t)=0 & \text{in}~ \mathbb{R}^3\times (0,T)
\end{cases}
\end{equation}
and
\begin{equation}\label{outer2}
\begin{cases}
\psi_{t}=\Delta\psi&\text{in}~ \mathbb{R}_{+}^4\times (0,T),\\
-\frac{d\psi}{dx_4}(\tilde{x},0,t)=g_{2}(\tilde{x},t)& \text{in}~ \mathbb{R}^3\times (0,T).\\
\end{cases}
\end{equation}

\subsection{Estimates for problem \eqref{outer19}}
We begin by estimating solutions to \eqref{outer19}. From \cite{heat-kernels2011}, the heat kernel of problem \eqref{outer19} is given by
\begin{equation}\label{outer20}
K_{t}(x,y):= \frac{1}{(4\pi t)^{2}}\left(e^{\frac{-|x-y|^{2}}{4t}}+e^{\frac{-|x-\bar{y}|^{2}}{4t}}\right),
\end{equation}
where $\bar {y}=(\tilde{y},-y_4)$ denotes the reflection of $y$ across the hyperplane $\{y_4=0\}$.
A solution $\psi$ to \eqref{outer19} can then be represented as
\begin{equation}\label{outer22:3}
\begin{aligned}\psi(x,t):&=\int_{0}^{t}\int_{\mathbb{R}^{4}_{+}}K_{t-s}(x,z)g_{1}(z,s)dzds
\\&=\int_{0}^{t}\int_{\mathbb{R}^{4}_{+}}\frac{1}{(t-s)^{2}}\left(e^{\frac{-|x-z|^{2}}{4(t-s)}}+e^{\frac{-|x-\bar{z}|^{2}}{4(t-s)}}\right)g_{1}(z,s)dzds.
\end{aligned}\end{equation}

We introduce the following weight functions
\begin{align}\label{Aug5-1}
\begin{cases}
\varrho_{1} := \mu_0^{\nu-3}(t) R^{-2-\alpha}(t) \chi_{\{|x - \xi(t)| \leq 2\mu_0R\}} \\
\varrho_{2} := \frac{\mu_0^{\nu_{2}}(t)}{|x-\xi(t)|^{2}} \chi_{\{|x-\xi(t)| \geq \mu_0R\}} \\
\varrho_{3} := 1
\end{cases}
\end{align}
where $ R(t) = \mu_0^{-\beta}(t) $ for $ \beta \in (0,1/2) $ throughout this paper.
We further define the norm for $g_1$ as
\begin{align}\label{Aug5-2}
\|g_1\|_{**}^{(1)} := \sup_{(x,t) \in \mathbb{R}^{4}_{+} \times (0,T)} \Big( \sum_{i=1}^{3} \varrho_{i}(x,t) \Big)^{-1} |g_1(x,t)|
\end{align}
For the solution $\psi$, we define a weighted norm
\begin{align}\label{Aug5-3}
\begin{aligned}
\|\psi\|_{*}^{(1)} &:= \left(\mu_0^{\nu-1}(0) R^{-\alpha}(0)+T^{\nu_2}|\log T|^{1-\nu_2}+T\right)^{-1} \|{\psi}\|_{L^{\infty}\left(\mathbb{R}^{4}_{+} \times (0,T)\right)} \\
&\quad + \left(\mu_0^{\nu-2}(0) R^{-1-\alpha}(0)+\mu_0^{\nu_2-1}(t)R^{-1}(t)+T^{\frac{1}{2}}\right)^{-1} \|{\nabla\psi}\|_{L^{\infty}\left(\mathbb{R}^{4}_{+} \times (0,T)\right)} \\
&\quad + \sup_{(x,t) \in \mathbb{R}^{4}_{+} \times (0,T)} \left[ \left(\mu_0^{\nu-1}(t) R^{-\alpha}(t)+(T-t)^{\nu_{2}}|\log T|^{\nu_{2}}|\log(T-t)|^{1-2\nu_{2}}\right.\right. \\
&\left.\left.\qquad\qquad\qquad\qquad+(T-t)|\log(T-t)|\right)^{-1} \left| \psi(x,t) - \psi(x,T) \right| \right].
\end{aligned}
\end{align}

We now establish the main estimate for problem \eqref{outer19}.
\begin{prop}\label{25Aug-prop5.1}
Let $\psi$ solve \eqref{outer19} with $\|g_1\|_{**}^{(1)} < +\infty$. Then
\begin{align*}
\|\psi\|_{*}^{(1)} \lesssim \|g_1\|_{**}^{(1)}.
\end{align*}
\end{prop}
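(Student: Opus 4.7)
The plan is to exploit the linearity of \eqref{outer19} together with the explicit Neumann heat kernel representation \eqref{outer22:3}. Since the hypothesis means $|g_1(x,t)| \leq \|g_1\|_{**}^{(1)}\bigl(\varrho_1+\varrho_2+\varrho_3\bigr)(x,t)$, I would decompose $g_1 = g_1^{(1)}+g_1^{(2)}+g_1^{(3)}$ with $|g_1^{(i)}| \leq \|g_1\|_{**}^{(1)}\varrho_i$ and let $\psi^{(i)}$ solve \eqref{outer19} with source $g_1^{(i)}$. The three summands appearing in each coefficient of the norm \eqref{Aug5-3} are lined up with the three weights, so it suffices to show that each $\psi^{(i)}$ satisfies the bound corresponding to its $i$-th summand.

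For the $L^\infty$ bounds I would use $K_{t-s}(x,z) \lesssim (t-s)^{-2}e^{-c|x-z|^2/(t-s)}$ together with its reflected counterpart. For $\psi^{(1)}$, whose source is a bump of height $\mu_0^{\nu-3}R^{-2-\alpha}$ concentrated on the ball $\{|z-\xi(s)|\leq 2\mu_0(s)R(s)\}$, the self-similar change of variables $z=\xi(s)+\mu_0(s)R(s)y$ together with a dyadic split of the $s$-integral according to whether $t-s$ is larger or smaller than $\mu_0^2(s)R^2(s)$ yields the factor $\mu_0^{\nu-1}(0)R^{-\alpha}(0)$. For $\psi^{(2)}$, the Gaussian convolution of the $|z-\xi(s)|^{-2}$-tail preserves a $|x-\xi(t)|^{-2}$ decay outside $\{|x-\xi(t)|\geq \mu_0R\}$, while the integration in $s$ near the blow-up trajectory produces the $T^{\nu_2}|\log T|^{1-\nu_2}$ summand. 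For $\psi^{(3)}$, the bounded source gives trivially $|\psi^{(3)}|\leq t\,\|g_1\|_{**}^{(1)}\leq T\,\|g_1\|_{**}^{(1)}$.

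For the gradient estimate I would run the same scheme with the kernel derivative bound $|\nabla_x K_{t-s}(x,z)|\lesssim (t-s)^{-1/2}K_{t-s}(x,z)$, and the three resulting weights $\mu_0^{\nu-2}(0)R^{-1-\alpha}(0)$, $\mu_0^{\nu_2-1}(t)R^{-1}(t)$, $T^{1/2}$ follow from the same dyadic decomposition. For the time-Hölder-type difference $\psi(x,t)-\psi(x,T)$, I would write
\begin{align*}
\psi(x,t)-\psi(x,T) &= -\int_{t}^{T}\!\!\int_{\mathbb{R}^4_+} K_{T-s}(x,z)\,g_1(z,s)\,dz\,ds \\
&\quad+\int_{0}^{t}\!\!\int_{\mathbb{R}^4_+} \bigl(K_{t-s}(x,z)-K_{T-s}(x,z)\bigr) g_1(z,s)\,dz\,ds,
\end{align*}
bound the first term by integrating $\varrho_i$ over $(t,T)$, and for the second use the identity $K_{t-s}-K_{T-s}=-\int_t^T\partial_\tau K_{\tau-s}\,d\tau$ together with $|\partial_\tau K_{\tau-s}|\lesssim (\tau-s)^{-1}K_{\tau-s}$, after which Fubini and the previous pointwise estimates deliver the three coefficients $\mu_0^{\nu-1}(t)R^{-\alpha}(t)$, $(T-t)^{\nu_2}|\log T|^{\nu_2}|\log(T-t)|^{1-2\nu_2}$, and $(T-t)|\log(T-t)|$, the logarithms arising from the factor $(\tau-s)^{-1}$ integrated on the range $\tau\in(t,T)$.

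The main obstacle will be the estimate for $\psi^{(2)}$ in the transition region where $|x-\xi(t)|\sim \mu_0(t)R(t)$, where both $\varrho_1$ and $\varrho_2$ are of comparable size and where the natural $s$-split crosses the time scale $\mu_0^2(s)R^2(s)$. Producing the clean pointwise bound $\mu_0^{\nu_2}(t)|x-\xi(t)|^{-2}$ up to the blow-up trajectory, with precisely the logarithmic loss $|\log(T-t)|^{1-2\nu_2}$ and no more, requires summing dyadic contributions carefully, using the time dependence $\mu_0(t)\asymp (T-t)|\log(T-t)|^{-2}$ from \eqref{Aug1-1} and the constraint $R=\mu_0^{-\beta}$ with $\beta\in(0,1/2)$; the exponents $\nu$, $\nu_2$, $\alpha$ must be chosen so that each dyadic piece is summable with exactly the logarithmic powers displayed in \eqref{Aug5-3}.
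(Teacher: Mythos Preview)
Your proposal is correct and follows essentially the same route as the paper: the paper deduces the proposition from three lemmas corresponding exactly to your three pieces $\psi^{(1)},\psi^{(2)},\psi^{(3)}$, each proved via the Duhamel representation \eqref{outer22:3} with a split of the $s$-integral at the scales $t-(T-t)$ and $t-\mu_0^2(t)R^2(t)$. One small refinement worth noting: for the time-difference estimate the paper does not apply $K_{t-s}-K_{T-s}=-\int_t^T\partial_\tau K_{\tau-s}\,d\tau$ on the full interval $[0,t]$, because for $s$ close to $t$ the factor $(\tau-s)^{-1}$ is unbounded; instead it uses that identity only on $[0,t-(T-t)]$ and treats the contribution from $[t-(T-t),t]$ by the triangle inequality on the two kernels separately, which is the natural fix you would make when carrying out the details.
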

Proposition \ref{25Aug-prop5.1} follows from the next three lemmas, each addressing different forms of the nonhomogeneous term $g_1(x,t)$.

\begin{lemma}\label{25Aug-lemma5.1}
Let $\psi$ solve problem \eqref{outer19} with
$$
|g_1(x,t)| \lesssim \mu_0^{\nu-3}(t)R^{-2-\alpha}(t)\chi_{\{|x-\xi(t)|\leq 2\mu_0R\}} , \quad \alpha,\nu \in (0,1).
$$
Then the following estimates hold
\begin{align}
|\psi(x,t)| &\lesssim \mu_0^{\nu-1}(0)R^{-\alpha}(0), \label{Aug5-4}
\\
|\psi(x,t) - \psi(x,T)| &\lesssim \mu_0^{\nu-1}(t)R^{-\alpha}(t), \label{Aug5-5}
\\
|\nabla\psi(x,t)| &\lesssim \mu_0^{\nu-2}(0)R^{-1-\alpha}(0). \label{Aug5-6}
\end{align}
\end{lemma}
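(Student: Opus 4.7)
The plan is to work directly from the Duhamel representation \eqref{outer22:3}, exploiting the spatial support of $g_1(\cdot,s)$ in $\{|z-\xi(s)| \leq 2\mu_0(s) R(s)\}$ and the pointwise bound $|g_1(z,s)| \lesssim \mu_0^{\nu-3}(s)R^{-2-\alpha}(s)$. I introduce the characteristic source scale $L(s) := \mu_0(s)R(s) = \mu_0^{1-\beta}(s)$, so that $L^2(s)$ is the associated parabolic time scale; all three estimates will proceed by splitting the $s$-integration at the crossover $t - s \asymp L^2(s)$.

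For \eqref{Aug5-4}, substituting the bound on $g_1$ gives
\begin{align*}
|\psi(x,t)| \leq \int_0^t \mu_0^{\nu-3}(s)R^{-2-\alpha}(s) \int_{|z - \xi(s)| \leq 2L(s)} K_{t-s}(x,z) \, dz \, ds,
\end{align*}
and I would treat two regimes. When $t - s \geq L^2(s)$, the Gaussian is essentially flat across the source support, so $K_{t-s}(x,z) \lesssim (t-s)^{-2}$ and the inner integral is bounded by $L^4(s)(t-s)^{-2}$, producing an $s$-integrable contribution with no singular endpoint. When $t - s < L^2(s)$, the kernel is concentrated and the inner integral is bounded by the total mass $\int K_{t-s}(x,z)\,dz \lesssim 1$; integrating the coefficient $\mu_0^{\nu-3}(s)R^{-2-\alpha}(s)$ over an interval of length $\sim L^2(t)$ then yields a contribution of order $\mu_0^{\nu-1}(t)R^{-\alpha}(t)$. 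Using the monotonicity of $\mu_0$ together with the asymptotic expansion \eqref{Aug1-1}, both contributions are absorbed into the claimed bound $\mu_0^{\nu-1}(0)R^{-\alpha}(0)$.

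Estimate \eqref{Aug5-6} follows the same template after differentiating the representation in $x$ and using $|\nabla_x K_\tau(x,z)| \lesssim \tau^{-5/2} e^{-c|x-z|^2/\tau}$; this inserts one extra factor $(t-s)^{-1/2} \asymp L^{-1}(s) = \mu_0^{\beta-1}(s)$, which accounts for the one-derivative improvement in the bound. For \eqref{Aug5-5}, I would write
\begin{align*}
\psi(x,T) - \psi(x,t) = \int_t^T\!\! \int K_{T-s}(x,z) g_1(z,s) \, dz\, ds + \int_0^t\!\! \int \bigl[K_{T-s} - K_{t-s}\bigr](x,z) g_1(z,s) \, dz\, ds,
\end{align*}
control the first summand exactly as in \eqref{Aug5-4} restricted to $s \in [t,T]$, and the second via the mean-value-in-time bound $|K_{T-s} - K_{t-s}|(x,z) \lesssim (T-t)(t-s)^{-3} e^{-c|x-z|^2/(t-s)}$, obtained from the identity $\partial_\tau K_\tau = \Delta_x K_\tau$ together with the pointwise Gaussian Hessian estimate, again combined with the same two-regime split.

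The main technical obstacle is the kernel-difference piece in \eqref{Aug5-5}: the source scale $L(s)$ and the two kernel widths $\sqrt{t-s}$, $\sqrt{T-s}$ all evolve with $s$, and the mean-value bound becomes wasteful precisely when $t - s$ is comparable to or smaller than $T - t$. Treating this cleanly requires an additional subcase split: in $t - s \gg T - t$ the linear-in-$(T-t)$ bound is sharp, while in $t - s \lesssim T - t$ one must abandon the linearization and bound each of $K_{T-s}$ and $K_{t-s}$ individually using the total-mass argument. Careful bookkeeping of the Gaussian decay against the shrinking support in both subcases is what produces the precise $\mu_0^{\nu-1}(t)R^{-\alpha}(t)$ dependence appearing in the norm \eqref{Aug5-3}.
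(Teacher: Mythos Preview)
Your proposal is correct and follows essentially the same route as the paper. The paper uses the Duhamel representation \eqref{outer22:3}, the change of variables $\hat z=(z-\xi(s))/2\sqrt{t-s}$, and a three-region split of the $s$-integral at $s=t-(T-t)$ and $s=t-\mu_0^2(t)R^2(t)$; the latter cut is exactly your crossover $t-s\asymp L^2(t)$, while the former is precisely the extra subcase split you identify as needed to make the mean-value bound on $K_{T-s}-K_{t-s}$ effective (the paper's $I_1$ versus $I_2$).
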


\begin{lemma}\label{25-lemma5.2}
Assume that $\psi(\tilde{x},x_{4},t)$ solves \eqref{outer19} with the right hand side
\begin{align}\label{Aug2-4}
|g_1(x,t)|\lesssim \frac{\mu_0^{\nu_2}}{|x-\xi(t)|^2}\chi_{\{|x-\xi(t)|\geq \mu_0R\}}.
\end{align}
Then for sufficiently small $T>0$, the unique solution $\psi(\tilde{x},x_{4},t)$ of problem \eqref{outer19} satisfies
\begin{align*}
|\psi(x,t)|&\lesssim T^{\nu_2}|\log T|^{1-\nu_2},
\\|\psi(x,t)-\psi(x,T)|&\lesssim (T-t)^{\nu_{2}}|\log T|^{\nu_{2}}|\log(T-t)|^{1-2\nu_{2}},
\\ |\nabla\psi(x,t)|&\lesssim \frac{\mu_0^{\nu_{2}-1}(t)}{R(t)}.
\end{align*}
\end{lemma}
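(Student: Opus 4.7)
The plan is to employ the explicit Neumann heat-kernel representation \eqref{outer22:3}, reduce each of the three claims to a one-dimensional time integral via a four-dimensional convolution estimate, and then carefully track logarithmic weights using the asymptotics of $\mu_0$. The first step is to establish the auxiliary convolution bound
$$\int_{\mathbb{R}^4} \frac{e^{-|x-z|^2/(4\tau)}}{\tau^2\,|z-\xi|^2}\,\chi_{\{|z-\xi|\geq \mu_0 R\}}\,dz \;\lesssim\; \frac{1}{\tau + |x-\xi|^2}\min\!\left(1,\, e^{-c(\mu_0 R)^2/\tau}\right),$$
which follows from the rescaling $z = \xi + \sqrt{\tau}\,u$ and an analysis of $J(y,a) := \int_{|u|\geq a} e^{-|y-u|^2/4}|u|^{-2}\,du$ with $y = (x-\xi)/\sqrt{\tau}$ and $a = \mu_0 R/\sqrt{\tau}$: splitting into $|u|\lesssim 1$, $|u-y|\lesssim 1$, and elsewhere gives $J(y,a)\lesssim(1+|y|^2)^{-1}$ in general, plus the Gaussian improvement $J(y,a)\lesssim e^{-a^2/8}$ in the regime $|y|\ll a$. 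Inserting this into \eqref{outer22:3} with \eqref{Aug2-4} yields
$$|\psi(x,t)| \;\lesssim\; \int_0^t \frac{\mu_0(s)^{\nu_2}\,\Phi_s(t)}{(t-s)+|x-\xi(s)|^2}\,ds, \qquad \Phi_s(t):=\min\!\left(1,\,e^{-c(\mu_0(s)R(s))^2/(t-s)}\right),$$
whose effective lower cutoff in $s$ is the scale $t-s\sim(\mu_0(s)R(s))^2$.

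The second step is to evaluate each time integral. For the $L^\infty$ bound, I would use $\mu_0(s)\leq \mu_0(0)\asymp T|\log T|^{-1}$ together with $\int_0^t \Phi_s(t)/(t-s)\,ds \lesssim |\log T|$ (the logarithm arising from integrating $1/(t-s)$ over the admissible scales $(\mu_0 R)^2 \lesssim t-s \lesssim T$), producing $|\psi|\lesssim \mu_0(0)^{\nu_2}|\log T|\lesssim T^{\nu_2}|\log T|^{1-\nu_2}$. For the time-increment bound I would split
$$\psi(x,t)-\psi(x,T) = -\int_t^T\!\!\int K_{T-s}(x,z)\,g_1(z,s)\,dz\,ds + \int_0^t\!\!\int [K_{t-s}-K_{T-s}](x,z)\,g_1(z,s)\,dz\,ds;$$
the first integral is handled exactly as above but over $[t,T]$, yielding $(T-t)^{\nu_2}|\log T|^{\nu_2}|\log(T-t)|^{1-2\nu_2}$ via the pointwise identity $\mu_0(s)\asymp(T-s)|\log T|\,|\log(T-s)|^{-2}$, while the second uses $|K_{t-s}-K_{T-s}|\lesssim(T-t)(T-s)^{-1}K_{2(T-s)}$ and matches the same bound. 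For the gradient I would invoke $|\nabla_x K_{t-s}|\lesssim(t-s)^{-1/2}K_{2(t-s)}$; the extra $(t-s)^{-1/2}$ causes the time integral to concentrate near $s=t$, where the effective cutoff gives $\int_{(\mu_0(t)R(t))^2}^t(t-s)^{-3/2}\mu_0(s)^{\nu_2}\,ds\asymp \mu_0(t)^{\nu_2}(\mu_0(t)R(t))^{-1} = \mu_0(t)^{\nu_2-1}/R(t)$.

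The main obstacle will be the careful bookkeeping of logarithmic weights, since the target bounds entangle $|\log T|$ with $|\log(T-t)|$ in delicate combinations that depend on whether $s$ sits near $0$, $t$, or $T$; the asymptotic $\mu_0(s)\asymp(T-s)|\log T|\,|\log(T-s)|^{-2}$ must be invoked with care in each regime and in both directions. A secondary subtlety is that the cutoff $\chi_{\{|z-\xi|\geq \mu_0 R\}}$ improves the convolution bound only when $|x-\xi|\lesssim\sqrt{t-s}$; in the complementary regime the spatial decay $((t-s)+|x-\xi|^2)^{-1}$ is already sharp, so the two regimes must be separated cleanly in the time integration, and the freedom $\beta\in(0,1/2)$ in $R(t)=\mu_0(t)^{-\beta}$ must be exploited to ensure that the resulting endpoints at $t-s\sim(\mu_0 R)^2$ do not produce excess powers of $|\log T|$.
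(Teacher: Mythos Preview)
Your approach is essentially the one the paper indicates: Duhamel's formula with the explicit Neumann heat kernel \eqref{outer20}, followed by time-splitting that isolates the scale $t-s\sim(\mu_0 R)^2$ and tracks the $\mu_0$-asymptotics. The paper does not actually write out a proof of this lemma; it states that the argument follows the pattern demonstrated for the estimates \eqref{Aug5-4}--\eqref{Aug5-5} of Lemma~\ref{25Aug-lemma5.1} (and refers to \cite[Section~15]{Davila-del-Pino-Wei2020}), where the splitting is into $[0,t-(T-t)]$, $[t-(T-t),\,t-\mu_0^2 R^2]$, $[t-\mu_0^2 R^2,\,t]$. Your cutoff factor $\Phi_s(t)$ encodes exactly the same threshold, and your convolution bound $\int e^{-|x-z|^2/4\tau}\tau^{-2}|z-\xi|^{-2}\,dz\lesssim(\tau+|x-\xi|^2)^{-1}$ plays the role that the direct change of variables $\hat z=(z-\xi)/(2\sqrt{t-s})$ plays in the paper's computation for Lemma~\ref{25Aug-lemma5.1}. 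The one cosmetic difference is packaging: you state a single convolution lemma up front and then reduce everything to scalar time integrals, whereas the paper keeps the spatial integral inside each time region; neither buys anything the other does not.
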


\begin{lemma}\label{25-lemma5.3}
Let $\psi$ solve problem \eqref{outer19} with right hand side
$$|g_1(x,t)| \lesssim 1.$$
Then it holds that
\begin{align*}
|\psi(x,t))| &\lesssim T,
\\|\psi(x,t)-\psi(x,T)| &\lesssim (T-t)|\log(T-t)|,
\\ |\nabla\psi(x,t)| &\lesssim T^{\frac{1}{2}}.
\end{align*}
\end{lemma}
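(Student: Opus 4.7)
The plan is to exploit the explicit representation
$$\psi(x,t)=\int_{0}^{t}\int_{\mathbb{R}^{4}_{+}}K_{t-s}(x,z)g_{1}(z,s)\,dz\,ds$$
given in \eqref{outer22:3}, together with the fact that the Neumann half-space kernel $K_{t}$ defined in \eqref{outer20} is a probability density on $\mathbb{R}^{4}_{+}$, that is, $\int_{\mathbb{R}^{4}_{+}}K_{t}(x,z)\,dz=1$ for every $x\in\mathbb{R}^{4}_{+}$ and $t>0$ (this is just the reflection-principle identity making the Gaussian kernel together with its mirror image add to a probability measure on the half-space). Under the hypothesis $|g_{1}|\lesssim 1$, a direct triangle inequality using this mass-one identity yields $|\psi(x,t)|\lesssim t\leq T$, which is the first claim.

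For the temporal modulus of continuity I would split
$$\psi(x,T)-\psi(x,t)=\int_{t}^{T}\!\!\int_{\mathbb{R}^{4}_{+}}K_{T-s}(x,z)g_{1}(z,s)\,dz\,ds+\int_{0}^{t}\!\!\int_{\mathbb{R}^{4}_{+}}\bigl[K_{T-s}(x,z)-K_{t-s}(x,z)\bigr]g_{1}(z,s)\,dz\,ds.$$
The first piece is bounded by $T-t$ by the same mass-one argument. For the second, I would rewrite the kernel difference as $\int_{t-s}^{T-s}\partial_{\tau}K_{\tau}(x,z)\,d\tau$, use the heat identity $\partial_{\tau}K_{\tau}=\Delta_{x}K_{\tau}$, and apply the standard Gaussian bound $\int_{\mathbb{R}^{4}_{+}}|\partial_{\tau}K_{\tau}(x,z)|\,dz\lesssim \tau^{-1}$ coming from the explicit formula for $K_{\tau}$. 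This reduces the outer estimate to controlling $\int_{0}^{t}\log\frac{T-s}{t-s}\,ds$; substituting $u=t-s$ and splitting the resulting integral at $u=T-t$, the inner range contributes $O(T-t)$, while on the outer range I use $\log\frac{T-t+u}{u}\leq (T-t)/u$ to get a contribution of $O\bigl((T-t)\log\frac{t}{T-t}\bigr)$. Together these yield the sharp $(T-t)|\log(T-t)|$ rate.

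For the gradient bound I would differentiate under the integral sign to obtain $\nabla\psi(x,t)=\int_{0}^{t}\int_{\mathbb{R}^{4}_{+}}\nabla_{x}K_{t-s}(x,z)g_{1}(z,s)\,dz\,ds$, and use the explicit Gaussian form in \eqref{outer20} to derive the elementary $L^{1}_{z}$ bound $\int_{\mathbb{R}^{4}_{+}}|\nabla_{x}K_{\tau}(x,z)|\,dz\lesssim \tau^{-1/2}$; this gives $|\nabla\psi(x,t)|\lesssim\int_{0}^{t}(t-s)^{-1/2}\,ds\lesssim t^{1/2}\leq T^{1/2}$. The only mildly delicate step is the logarithmic loss in the H\"older-in-time estimate: it is forced because the $L^{1}_{z}$ norm of $\partial_{\tau}K_{\tau}$ decays only like $\tau^{-1}$, but the dyadic-type split at $u=T-t$ precisely absorbs the near-singularity and produces exactly the rate stated in the lemma.
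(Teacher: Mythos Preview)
Your argument is correct and follows essentially the same Duhamel-based approach the paper uses (the paper omits the proof of this particular lemma, pointing to the analogous computations for Lemmas~\ref{25Aug-lemma5.1} and~\ref{25-lemma5.2}). The only cosmetic difference is that for the time-modulus bound the paper's template splits the $s$-integral into the three regions $[0,t-(T-t)]$, $[t-(T-t),t]$, $[t,T]$ and applies the mean value theorem on the first, whereas you write the kernel difference as $\int_{t-s}^{T-s}\partial_{\tau}K_{\tau}\,d\tau$ directly and then split in $u=t-s$; both routes lead to the same $(T-t)|\log(T-t)|$ control.
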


Proposition \ref{25Aug-prop5.1} follows directly from Lemmas \ref{25Aug-lemma5.1}, \ref{25-lemma5.2}, and \ref{25-lemma5.3}.
The proofs of these lemmas are established via Duhamel's principle, following a similar approach to that presented in \cite[Section 15]{Davila-del-Pino-Wei2020}.
For brevity, we present complete arguments only for estimates \eqref{Aug5-4} and \eqref{Aug5-5}, omitting details for the remaining estimates.

\begin{proof}[Proof of \eqref{Aug5-4}.]
Using the heat kernel \eqref{outer20}, we bound
\begin{equation}\label{25Aug-outer23}
\begin{aligned}
\psi(x,t)
=&~\int_{0}^{t}\int_{\mathbb{R}^{4}_{+}}K_{t-s}(x,z)g_{1}(z,s)dzds
\\\lesssim&~ \int_{0}^{t}\frac{\mu_0^{\nu-3}(s)R^{-2-\alpha}(s)}{(t-s)^2}\int_{\{|z-\xi(s)|\leq 2\mu_0(s)R(s)\}\cap \mathbb{R}^{4}_{+}}e^{\frac{-|x-z|^{2}}{4(t-s)}}+e^{\frac{-|x-\bar{z}|^{2}}{4(t-s)}}
dzds.
\end{aligned}
\end{equation}
By symmetry, we focus on the integral containing $e^{-\frac{|x-z|^2}{4(t-s)}}$. Apply the change of variables
\begin{align*}
\hat{z} = \frac{z - \xi(s)}{2\sqrt{t-s}}, \quad \hat{x} = \frac{x - \xi(s)}{2\sqrt{t-s}},
\end{align*}
to obtain
\begin{equation}\label{outer24}
|\psi(x,t)| \lesssim \int_{0}^{t} \mu_0^{\nu-3}(s) R^{-2-\alpha}(s) \int_{|\hat{z}| \leq \mu_0(s) R(s) (t-s)^{-1/2}} e^{-|\hat{x} - \hat{z}|^2} d\hat{z}  ds.
\end{equation}
Split the time integral into three regions:
\begin{align*}
\left[0, t-(T-t)\right], \quad \left[t-(T-t), t-\mu_0^2(t)R^2(t)\right], \quad \left[t-\mu_0^2(t)R^2(t), t\right].
\end{align*}
Firstly, for $s\in [0,t-(T-t)]$, since $T-s<2(t-s)$, we have
\begin{align}\label{Aug5-9}
&\int_{0}^{t-(T-t)}\mu_0^{\nu-3}(s)R^{-2-\alpha}(s)\int_{\left\{|\hat{z}|\leq \frac{\mu_0(s)R(s)}{\sqrt{t-s}}\right\}\cap \mathbb{R}^{4}_{+}}e^{-|\hat{x}-\hat{z}|^{2}}d\hat{z} ds \notag
\\&\lesssim\int_{0}^{t-(T-t)}\frac{\mu_0^{\nu+1}(s)R^{2-\alpha}(s)}{(t-s)^2}ds\notag
\\&\lesssim\int_{0}^{t-(T-t)}\frac{\mu_0^{\nu+1}(s)R^{2-\alpha}(s)}{(T-s)^2}ds\notag
\\&\lesssim \mu_0^{\nu+1}(0)R^{2-\alpha}(0).
\end{align}
Consider $s\in \left[t-(T-t), t-\mu_0^2(t)R^2(t)\right]$, there holds
\begin{align}\label{Aug5-10}
&\int_{t-(T-t)}^{t-\mu^{2}_0(t)R^2(t)}\mu_0^{\nu-3}(s)R^{-2-\alpha}(s)\int_{\left\{|\hat{z}|\leq \frac{\mu_0(s)R(s)}{\sqrt{t-s}}\right\}\cap \mathbb{R}^{4}_{+}}e^{-|\hat{x}-\hat{z}|^{2}}d\hat{z}ds \notag
\\&\lesssim\int_{t-(T-t)}^{t-\mu^{2}_0(s)R^2(s)}\frac{\mu_0^{\nu+1}(s)R^{2-\alpha}(s)}{(t-s)^2}ds\notag
\\&\lesssim \mu_0^{\nu-1}(t)R^{-\alpha}(t).
\end{align}
For $s\in \left[t-\mu_0^2(t)R^2(t), t\right]$, we obtain
\begin{align}\label{Aug5-11}
&\int_{t-\mu^{2}_0(t)R^2(t)}^{t}\mu_0^{\nu-3}(s)R^{-2-\alpha}(s)\int_{\left\{|\hat{z}|\leq \frac{\mu_0(s)R(s)}{\sqrt{t-s}}\right\}\cap \mathbb{R}^{4}_{+}}e^{-|\hat{x}-\hat{z}|^{2}}d\hat{z}ds \notag
\\&\lesssim\int_{t-\mu^{2}_0(t)R^2(t)}^{t}\mu_0^{\nu-3}(s)R^{-2-\alpha}(s)ds \notag
\\&\lesssim \mu_0^{\nu-1}(t)R^{-\alpha}(t).
\end{align}
Combining \eqref{Aug5-9}, \eqref{Aug5-10} and \eqref{Aug5-11} yields
$$
|\psi| \lesssim \mu_0^{\nu-1}(0)R^{-\alpha}(0),
$$
which establishes the estimate \eqref{Aug5-4}.
\end{proof}

\begin{proof}[Proof of \eqref{Aug5-5}.]
Decompose the difference as
$$|\psi(x,t)-\psi(x,T)|\leq I_{1}+I_{2}+I_{3},$$
where
\begin{align*}
I_{1} &:= \int_{0}^{t-(T-t)}\int_{\mathbb{R}^{4}_{+}}\left|K_{t-s}(x,z)-K_{T-s}(x,z)\right| |g_{1}(z,s)|dz  ds, \\
I_{2} &:= \int_{t-(T-t)}^{t}\int_{\mathbb{R}^{4}_{+}}\left|K_{t-s}(x,z)-K_{T-s}(x,z)\right| |g_{1}(z,s)|dz  ds, \\
I_{3} &:= \int_{t}^{T}\int_{\mathbb{R}^{4}_{+}}\left|K_{T-s}(x,z)\right|  |g_{1}(z,s)|dz  ds.
\end{align*}
\textbf{Estimate for $I_1$.} For $t_v = v t + (1-v)T$, $v \in [0,1]$, there holds
$$I_{1}\leq(T-t)\int_{0}^{1}\int_{0}^{t-(T-t)}\mu_0^{\nu-3}(s)R^{-2-\alpha}(s)\int_{\{|z-\xi(s)|\leq 2\mu_0(s)R(s)\}\cap \mathbb{R}^{4}_{+}}|\partial_{t}K_{t_v-s}(x,z)| dz ds dv.$$
Since
\begin{align*}
&\int_{\{|z-\xi(s)|\leq 2\mu_0(s)R(s)\}\cap \mathbb{R}^{4}_{+}}|\partial_{t}K_{t_v-s}(x,z)|  dz \\
&\lesssim \frac{1}{(t_{v}-s)^{3}}\int_{\{|z-\xi(s)|\leq 2\mu_0(s)R(s)\}\cap \mathbb{R}^{4}_{+}}e^{-\frac{|x-z|^{2}}{4(t_{v}-s)}}\left(1+\frac{|x-z|^{2}}{t_{v}-s}\right)  dz \\
&\lesssim \frac{1}{(t_{v}-s)}\int_{\left\{|\hat{z}|\leq \frac{\mu_0(s)R(s)}{\sqrt{t-s}}\right\}\cap \mathbb{R}^{4}_{+}}e^{-\left|\hat{x}-\hat{z}\right|^{2}}\left(1+4\left|\hat{x}-\hat{z}\right|^{2}\right)  d\hat{z}.
\end{align*}
We then get
\begin{align*}
&\int_{0}^{t-(T-t)}\mu_0^{\nu-3}(s)R^{-2-\alpha}(s)\int_{\{|z-\xi(s)|\leq 2\mu_0(s)R(s)\}\cap \mathbb{R}^{4}_{+}}|\partial_{t}K_{t_v-s}(x,z)|  dz  ds \\
&\lesssim \int_{0}^{t-(T-t)}\frac{\mu_0^{\nu+1}(s)R^{2-\alpha}(s)}{(t_{v}-s)^{3}}  ds \\
&\lesssim \int_{0}^{t-(T-t)}\frac{\mu_0^{\nu+1}(s)R^{2-\alpha}(s)}{(T-s)^{3}}  ds \\
&\lesssim \frac{|\log T|^{(\nu+1)-\beta(2-\alpha)}(T-t)^{(\nu+1)-\beta(2-\alpha)-2}}{|\log (T-t)|^{2(\nu+1)-2\beta(2-\alpha)}},
\end{align*}
where we have used $R(t)=\mu_0^{-\beta}(t)$. Thus,
$$I_{1}\lesssim \frac{\mu_0^{\nu+1}(t)R^{2-\alpha}(t)}{T-t}.$$
\textbf{Estimate of $I_2$.}
\begin{align*}
I_{2} &\leq \int_{t-(T-t)}^{t}\int_{\{|z-\xi(s)|\leq 2\mu_0(s)R(s)\}\cap \mathbb{R}^{4}_{+}}|K_{t-s}(x,z)|\mu_0^{\nu-3}(s)R^{-2-\alpha}(s)  dz  ds \\
&\quad + \int_{t-(T-t)}^{t}\int_{\{|z-\xi(s)|\leq 2\mu_0(s)R(s)\}\cap \mathbb{R}^{4}_{+}}|K_{T-s}(x,z)|\mu_0^{\nu-3}(s)R^{-2-\alpha}(s)  dz  ds
\\&:=J_1+J_2.
\end{align*}
Similar to the estimate of \eqref{Aug5-10} and \eqref{Aug5-11}, we get
\begin{align*}
J_1&\lesssim\left[\int_{t-(T-t)}^{t-\mu^{2}_0(t)R^2(t)}+ \int_{t-\mu^{2}_0(t)R^2(t)}^{t}\right]
\mu_0^{\nu-3}(s)R^{-2-\alpha}(s)\int_{\left\{|\hat{z}|\leq \frac{\mu_0(s)R(s)}{\sqrt{t-s}}\right\}\cap \mathbb{R}^{4}_{+}}e^{-|\hat{x}-\hat{z}|^2}  d\hat{z} ds
\\&\lesssim \mu_0^{\nu-1}(t)R^{-\alpha}(t).
\end{align*}
Similar computations show that
\begin{align*}
J_2\lesssim \frac{\mu_0^{\nu+1}(t)R^{2-\alpha}(t)}{T-t}+\mu_0^{\nu-1}(t)R^{-\alpha}(t)
\lesssim \mu_0^{\nu-1}(t)R^{-\alpha}(t).
\end{align*}
Thus,
$$I_{2}\lesssim \mu_0^{\nu-1}(t)R^{-\alpha}(t).$$
\textbf{Estimate for $I_3$.}
\begin{align*}
I_{3} &\lesssim\int_{t}^{T}\frac{{\mu_0^{\nu-3}(s)R^{-2-\alpha}(s)}}{(T-s)^2} \int_{\{|z-\xi(s)|\leq 2\mu_0(s)R(s)\}\cap \mathbb{R}^{4}_{+}}e^{-\frac{|x-z|^{2}}{4(T-s)}}dz ds \\
&\lesssim\int_{t}^{T}\mu_0^{\nu-3}(s)R^{-2-\alpha}(s)\int_{\left\{|\hat{z}|\leq \frac{\mu_0(s)R(s)}{\sqrt{t-s}}\right\}\cap \mathbb{R}^{4}_{+}}e^{-|\hat{x}-\hat{z}|^2}  d\hat{z} ds \\
&\lesssim\int_{t}^{T}\frac{\mu_0^{\nu+1}(s)R^{2-\alpha}(s)}{(T-s)^2}  ds \\
&\lesssim\frac{\mu_0^{\nu+1}(t)R^{2-\alpha}(t)}{T-t}.
\end{align*}
Combining all estimates, we obtain
$$|\psi(x,t) - \psi(x,T)| \lesssim \mu_0^{\nu-1}(t) R^{-\alpha}(t).$$
\end{proof}

\subsection{Estimates for problem \eqref{outer2}}
We now consider problem \eqref{outer2}. Recall that its solution kernel is given by
\begin{equation}\label{outer3}
K(\tilde{x},x_4,t):=\frac{1}{\Gamma\left(\frac{1}{2}\right)(4\pi)^{\frac{3}{2}}} \frac{1}{t^2}e^{\frac{-(|\tilde{x}|^{2}+x_{4}^{2})}{4t}}.
\end{equation}
Then the function
$$\psi(x,t)=\psi(\tilde{x},x_{4},t):=\int_{0}^{\infty}\int_{\mathbb{R}^3}K(\tilde{z},x_{4},s)g_{2}(\tilde{x}-\tilde{z},t-s)d\tilde{z}ds$$
is a solution of problem \eqref{outer2}.
For $\psi(\tilde{x},x_{4},t)$ and $g_{2}(\tilde{x},t)$, we introduce the following norms

\begin{align}\label{outer5}
\begin{aligned}
\|\psi\|_{*}^{(2)}&:=\left(\mu_0^{\nu-1}(0) R^{-\alpha}(0)+T^{\nu_2}|\log T|^{1-\nu_2}\right)^{-1}\|\psi\|_{L^{\infty}\left(\mathbb{R}^{4}_{+} \times (0,T)\right)}
\\&\quad + \left(\mu_0^{\nu-2}(0) R^{-1-\alpha}(0)+\mu_0^{\nu_2-1}(t)R^{-1}(t)\right)^{-1} \|{\nabla\psi}\|_{L^{\infty}\left(\mathbb{R}^{4}_{+} \times (0,T)\right)}
\\&\quad + \sup_{(x,t) \in \mathbb{R}^{4}_{+} \times (0,T)} \left[ \left(\mu_0^{\nu-1}(0) R^{-\alpha}(0)+(T-t)^{\nu_{2}}|\log T|^{\nu_{2}}|\log(T-t)|^{1-2\nu_{2}}\right)^{-1}\right.
\\&\qquad\qquad\qquad\qquad\left.\left| \psi(x,t) - \psi(x,T) \right| \right] ,
\end{aligned}
\end{align}

\begin{equation}\label{outer6}
\|g_{2}\|_{**}^{(2)}:=\sup_{\mathbb{R}^3\times(0,T)}\frac{|g_{2}(\tilde{x},t)|}{\rho_{**}^{(2)}},
\end{equation}
with weight function
\begin{align*}
\rho_{**}^{(2)}:=\mu_0^{\nu-2}(t) R^{-1-\alpha}(t) \chi_{\{|\tilde{x} -\tilde{\xi}(t)| \leq 2\mu_0R\}}+\frac{\mu_0^{\nu_{2}}(t)}{|\tilde{x}-\tilde{\xi}(t)|} \chi_{\{|\tilde{x}-\tilde{\xi}(t)| \geq \mu_0R\}}.
\end{align*}
Then we have the following key estimate for $\psi$.
\begin{prop}\label{25-prop5.5}
Assume $\|g_2\|_{**}^{(2)} <\infty$ and $g_{2}(\tilde{x },t)\equiv g_{2}(\tilde{x},0)$ for $t\in (-\infty,0)$. Then the unique solution to \eqref{outer2} satisfies
\begin{equation}\label{outer:7}
\|\psi\|_{*}^{(2)}\lesssim\|g_{2}\|_{**}^{(2)}.\end{equation}
\end{prop}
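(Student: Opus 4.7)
The plan is to mirror the strategy used for Proposition \ref{25Aug-prop5.1}: split $g_2$ into the two pieces corresponding to the two terms of the weight $\rho_{**}^{(2)}$, namely $g_2 = g_2^{(a)} + g_2^{(b)}$ where $g_2^{(a)}$ is the part supported in $\{|\tilde x - \tilde\xi(t)|\leq 2\mu_0 R\}$ with $|g_2^{(a)}|\lesssim \mu_0^{\nu-2}(t)R^{-1-\alpha}(t)$ and $g_2^{(b)}$ is supported in $\{|\tilde x - \tilde\xi(t)|\geq \mu_0 R\}$ with $|g_2^{(b)}|\lesssim \mu_0^{\nu_2}(t)/|\tilde x-\tilde\xi(t)|$. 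By linearity $\psi=\psi^{(a)}+\psi^{(b)}$, and each solves \eqref{outer2} with the corresponding boundary datum. I then bound $\|\psi^{(a)}\|_*^{(2)}$ and $\|\psi^{(b)}\|_*^{(2)}$ separately against the two summands on the right hand side of \eqref{outer:7}.

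For each piece I use the Duhamel representation with the kernel \eqref{outer3}. The change of variables $\hat z=(\tilde x-\tilde z -\tilde\xi(t-s))/(2\sqrt{s})$, $\hat x_4 = x_4/(2\sqrt{s})$ normalizes the Gaussian and turns the inner indicator into $\{|\hat z|\leq \mu_0(t-s)R(t-s)/\sqrt{s}\}$. The time integral is then split into the three regimes $s\in(0,\mu_0^2(t)R^2(t))$, $s\in(\mu_0^2(t)R^2(t),T-t)$, and $s\in(T-t,T)$, exactly as in the proofs of Lemmas \ref{25Aug-lemma5.1} and \ref{25-lemma5.2}, and in each regime the inner indicator either covers all of $\hat z$-space or restricts it to a small ball. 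For $\psi^{(a)}$ this reproduces the bound $\mu_0^{\nu-1}(0)R^{-\alpha}(0)$ on $|\psi|$, the bound $\mu_0^{\nu-2}(0)R^{-1-\alpha}(0)$ on $|\nabla\psi|$ (gained by differentiating the Gaussian, which contributes an extra $1/\sqrt{s}$), and $\mu_0^{\nu-1}(t)R^{-\alpha}(t)$ on $|\psi(x,t)-\psi(x,T)|$ via the same $t_v=vt+(1-v)T$ splitting of $I_1,I_2,I_3$ as in the proof of \eqref{Aug5-5}. For $\psi^{(b)}$, one instead bounds the inner spatial integral by $\int_{\mathbb{R}^3} |g_2^{(b)}(\tilde x-\tilde z,t-s)| s^{-2} e^{-|\tilde z|^2/(4s)}\,d\tilde z$ and uses $|\tilde z|^{-1}\in L^1_{\text{loc}}$ in three dimensions after rescaling, which gives the $T^{\nu_2}|\log T|^{1-\nu_2}$ and $\mu_0^{\nu_2-1}(t)R^{-1}(t)$ bounds with the logarithmic factor emerging from integrating $s^{-1}$ over the critical range where $\sqrt{s}\asymp |\tilde x-\tilde\xi|$.

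The main obstacle relative to the Dirichlet-type setting of Proposition \ref{25Aug-prop5.1} is that the kernel $K(\tilde z, x_4,s) = c\, s^{-2} e^{-(|\tilde z|^2+x_4^2)/4s}$ is only three-dimensional in $\tilde z$: integration against $d\tilde z$ yields $s^{-1/2} e^{-x_4^2/4s}$, which behaves like the half-space Poisson kernel and is not bounded uniformly in $s\to 0$. This forces extra care in the small-$s$ regime $s\in (0,\mu_0^2(t)R^2(t))$, where one must use the boundary indicator together with the fast Gaussian decay in $\tilde z$ to absorb the $s^{-1/2}$ singularity, and also handle $x_4$ uniformly down to $x_4=0$ where the boundary is attained. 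The gradient in the tangential direction $\tilde x$ is straightforward, but the $x_4$-derivative requires replacing $e^{-x_4^2/4s}$ by $(x_4/s)e^{-x_4^2/4s}$, whose maximum over $x_4$ introduces an additional $s^{-1/2}$; this is the source of the factor $\mu_0^{\nu_2-1}(t)R^{-1}(t)$ in the outer-part gradient estimate and is where the argument is most delicate.

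Finally, having proved both sub-estimates, the proposition follows by the triangle inequality $\|\psi\|_*^{(2)}\leq \|\psi^{(a)}\|_*^{(2)}+\|\psi^{(b)}\|_*^{(2)}\lesssim \|g_2\|_{**}^{(2)}$, invoking the stationarity assumption $g_2(\tilde x,t)\equiv g_2(\tilde x,0)$ for $t<0$ to justify extending the time integral to $(-\infty,t)$ without boundary contributions and to provide the initial condition consistent with the norm at $t=T$.
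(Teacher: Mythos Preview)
Your approach matches the paper's: it reduces the proposition to Lemmas \ref{25Aug-lemma5.6} and \ref{25Aug-lemma5.7}, corresponding exactly to your pieces $g_2^{(a)}$ and $g_2^{(b)}$, and proves them via the same Duhamel formula with the boundary kernel \eqref{outer3}, the same change of variables, and the same time-splitting. The one detail worth making more explicit is how stationarity is used: the paper splits $\int_0^\infty = \int_0^t + \int_t^\infty$, freezes $g_2$ at time $0$ on the tail, and bounds it via $\int_t^\infty s^{-2} e^{-|\tilde z|^2/(4s)}\,ds \lesssim |\tilde z|^{-2}$, which is the concrete mechanism behind your closing sentence.
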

Proposition \ref{25-prop5.5} follows from the subsequent lemmas.

\begin{lemma}\label{25Aug-lemma5.6}
Let $\psi$ solve problem \eqref{outer2} with $g_{2}(\tilde{x },t)\equiv g_{2}(\tilde{x},0)$ for $t\in (-\infty,0)$  and
$$
|g_2(\tilde{x},t)| \lesssim \mu_0^{\nu-2}(t) R^{-1-\alpha}(t) \chi_{\{|\tilde{x} -\tilde{\xi}(t)| \leq 2\mu_0R\}} , \quad \alpha,\nu \in (0,1).
$$
Then the following estimates hold
\begin{align}
|\psi(x,t)| &\lesssim \mu_0^{\nu-1}(0)R^{-\alpha}(0), \label{Aug7-1}
\\
|\psi(x,t) - \psi(x,T)| &\lesssim \mu_0^{\nu-1}(0)R^{-\alpha}(0), \label{Aug7-2}
\\
|\nabla_{\tilde{x}} \psi(x,t)| &\lesssim \mu_0^{\nu-2}(t)R^{-1-\alpha}(t). \label{Sep19-1}
\end{align}
\end{lemma}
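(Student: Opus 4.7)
The plan is to follow the template of Lemma \ref{25Aug-lemma5.1}, adapted to the boundary kernel $K$ of \eqref{outer3}. Starting from the convolution representation
\begin{equation*}
\psi(\tilde x, x_4, t) \;=\; \int_{-\infty}^{t} \int_{\mathbb{R}^3} K(\tilde x - \tilde z, x_4, t-s)\, g_2(\tilde z, s)\, d\tilde z\, ds,
\end{equation*}
with $g_2(\cdot, s) \equiv g_2(\cdot, 0)$ for $s<0$ by hypothesis, I would insert the pointwise bound on $g_2$ and rescale via $\hat z = (\tilde z - \tilde\xi(s))/(2\sqrt{t-s})$. Because the forcing is supported on the $3$-dimensional slice $\{x_4 = 0\}$ rather than on a full $4$-dimensional set, the Jacobian $(2\sqrt{t-s})^3$ combined with the $(t-s)^{-2}$ factor in $K$ leaves a net prefactor $(t-s)^{-1/2}$, one power less singular than its analogue in Lemma \ref{25Aug-lemma5.1}.

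For \eqref{Aug7-1}, this yields the master bound
\begin{equation*}
|\psi(x,t)| \;\lesssim\; \int_{-\infty}^t \frac{\mu_0^{\nu-2}(s) R^{-1-\alpha}(s)}{\sqrt{t-s}} \int_{\{|\hat z| \leq \mu_0(s) R(s)/\sqrt{t-s}\}} e^{-|\hat x - \hat z|^2}\, d\hat z\, ds.
\end{equation*}
I would partition the $s$-integral into the three windows $(-\infty,\, t-(T-t))$, $[t-(T-t),\, t-\mu_0^2(t) R^2(t)]$ and $[t-\mu_0^2(t) R^2(t),\, t]$, paralleling \eqref{Aug5-9}--\eqref{Aug5-11}. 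On the first two windows $\mu_0(s) R(s) \ll \sqrt{t-s}$, so the inner integral is controlled by the volume of the rescaled support $\sim \mu_0^3(s) R^3(s)(t-s)^{-3/2}$; combined with the prefactor, the integrand becomes $\sim \mu_0^{\nu+1}(s) R^{2-\alpha}(s)(t-s)^{-2}$, which integrates to $\lesssim \mu_0^{\nu+1}(0) R^{2-\alpha}(0) \leq \mu_0^{\nu-1}(0) R^{-\alpha}(0)$ (the $s<0$ tail is absorbed thanks to the $(t-s)^{-2}$ decay and the constancy of $g_2$ there). On the third window the inner integral is $O(1)$ and the integrand is $\sim \mu_0^{\nu-2}(s) R^{-1-\alpha}(s)(t-s)^{-1/2}$, contributing $\mu_0^{\nu-1}(t) R^{-\alpha}(t)$. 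Summing the three pieces gives \eqref{Aug7-1}.

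The time-oscillation estimate \eqref{Aug7-2} is essentially free: its right-hand side coincides with that of \eqref{Aug7-1}, so it follows from the triangle inequality $|\psi(x,t) - \psi(x,T)| \leq 2\|\psi\|_{L^\infty}$, bypassing the kernel-difference decomposition used in the proof of \eqref{Aug5-5}.

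The main obstacle is the gradient estimate \eqref{Sep19-1}. Differentiating $K$ in $\tilde x$ inserts the factor $(\tilde x - \tilde z)/(t-s)$, so after the rescaling the integrand on the near-time window $[t-\mu_0^2(t)R^2(t),\, t]$ becomes $\sim \mu_0^{\nu-2}(s) R^{-1-\alpha}(s)(t-s)^{-1}$, only marginally integrable at $s=t$ when $x_4$ is close to the boundary. To close this I would exploit the identity $\nabla_{\tilde x} K(\tilde x - \tilde z, \cdot) = -\nabla_{\tilde z} K(\tilde x - \tilde z, \cdot)$, transferring the $\tilde x$-derivative onto a smoothed version of the cutoff weight $\rho_{**}^{(2)}$, and pick up a controlled surface contribution on $\{|\tilde z - \tilde\xi(s)| = 2\mu_0(s) R(s)\}$ of scale $\mu_0^{\nu-2}(t) R^{-1-\alpha}(t)$ after $s$-integration. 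Together with the Gaussian factor $e^{-x_4^2/(4(t-s))}$, which suppresses the near-time contribution whenever $x_4 \gtrsim \sqrt{t-s}$, and the same three-window time split used above, this yields $|\nabla_{\tilde x} \psi(x,t)| \lesssim \mu_0^{\nu-2}(t) R^{-1-\alpha}(t)$, with the time-$t$ (rather than time-$0$) dependence reflecting that the dominant contribution now comes from the near-time window, following the scheme developed in \cite[Section 15]{Davila-del-Pino-Wei2020}.
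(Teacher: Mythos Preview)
Your approach is correct and matches the paper's strategy: Duhamel representation with the boundary kernel $K$, the rescaling $\hat z=(\tilde z-\tilde\xi(s))/(2\sqrt{t-s})$ producing the $(t-s)^{-1/2}$ prefactor, and the three-window time split inherited from Lemma~\ref{25Aug-lemma5.1}. The one substantive difference lies in how the backward tail $s<0$ is treated. You fold it into the first window and rely on the $(t-s)^{-2}$ decay of the rescaled integrand; this works but requires a small case analysis (whether $t\gtrless\mu_0^2(0)R^2(0)$) that you do not spell out. The paper instead isolates this contribution as a separate term $I_2$, integrates the kernel in time first via $\int_t^{\infty} s^{-2}e^{-|\tilde z|^{2}/(4s)}\,ds\lesssim|\tilde z|^{-2}$, and then performs a single spatial integral over the frozen support at time~$0$. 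The paper's route avoids the case split and makes the role of the extension $g_2(\cdot,t)\equiv g_2(\cdot,0)$ for $t<0$ more transparent. Your triangle-inequality shortcut for \eqref{Aug7-2} is valid and simpler than the kernel-difference decomposition; the paper merely says ``similar arguments'' and omits the details, so you lose nothing there. The gradient estimate \eqref{Sep19-1} is not written out in the paper either; your integration-by-parts plan is a reasonable way to proceed.
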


\begin{lemma}\label{25Aug-lemma5.7}
Assume that $\psi$ solves \eqref{outer2} with $g_{2}(\tilde{x },t)\equiv g_{2}(\tilde{x},0)$ for $t\in (-\infty,0)$  and
\begin{align}\label{Aug2-4}
|g_2(\tilde{x},t)|\lesssim \frac{\mu_0^{\nu_{2}}(t)}{|\tilde{x}-\tilde{\xi}(t)|} \chi_{\{|\tilde{x}-\tilde{\xi}(t)| \geq \mu_0R\}}.
\end{align}
Then for sufficiently small $T>0$, the following estimates hold
\begin{align*}
|\psi(x,t)|&\lesssim T^{\nu_2}|\log T|^{1-\nu_2},
\\|\psi(x,t)-\psi(x,T)|&\lesssim (T-t)^{\nu_{2}}|\log T|^{\nu_{2}}|\log(T-t)|^{1-2\nu_{2}},
\\ |\nabla_{\tilde{x}}\psi(x,t)|&\lesssim  \frac{\mu_0^{\nu_{2}-1}(t)}{R(t)}.
\end{align*}
\end{lemma}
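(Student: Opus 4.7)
The plan is to use the Duhamel representation
$$\psi(\tilde{x}, x_4, t) = \int_0^\infty\!\!\int_{\mathbb{R}^3} K(\tilde{z}, x_4, s)\, g_2(\tilde{x}-\tilde{z}, t-s)\, d\tilde{z}\, ds,$$
where $K$ is the kernel in \eqref{outer3} and $g_2(\cdot,\tau)\equiv g_2(\cdot,0)$ for $\tau<0$, and to estimate the resulting space-time integral region-by-region, in the same spirit as the interior analogue Lemma~\ref{25-lemma5.2} and \cite[Section 15]{Davila-del-Pino-Wei2020}. After the change of variables $\tilde{y}=\tilde{x}-\tilde{z}$ and the trivial bound $e^{-x_4^2/(4s)}\le 1$, all three claims reduce to controlling
$$\mathcal{I}(\tilde{x},t) := \int_0^\infty\!\!\int_{\mathbb{R}^3} \frac{c}{s^{2}}\, e^{-|\tilde{x}-\tilde{y}|^{2}/(4s)}\, \frac{\mu_0^{\nu_2}(t-s)}{|\tilde{y}-\tilde{\xi}(t-s)|}\, \chi_{\{|\tilde{y}-\tilde{\xi}(t-s)|\ge \mu_0(t-s)R(t-s)\}}\, d\tilde{y}\, ds$$
and its $\nabla_{\tilde{x}}$-counterpart.

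The first step is the spatial convolution. A self-similar rescaling $\tilde{y}-\tilde{\xi}(\tau)=\sqrt{s}\,\hat{y}$, with $\tau=t-s$, reduces the inner integral to a dimensionless one and yields
$$\int_{|\tilde{y}-\tilde{\xi}(\tau)|\ge \mu_0(\tau)R(\tau)} \frac{e^{-|\tilde{x}-\tilde{y}|^{2}/(4s)}}{s^{2}\,|\tilde{y}-\tilde{\xi}(\tau)|}\, d\tilde{y} \;\lesssim\; \frac{1}{\max\bigl(s,\; \sqrt{s}\,|\tilde{x}-\tilde{\xi}(\tau)|,\; \sqrt{s}\,\mu_0(\tau)R(\tau)\bigr)}.$$
The analogue for $\nabla_{\tilde{x}} K$ in place of $K$ improves this by an extra factor $s^{-1/2}$.

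Next, for the pointwise bound on $\psi$ I would split the $s$-integration at the thresholds $s_{*}=\mu_0^{2}(t)R^{2}(t)$, $s=T-t$, and $s=T$. In the inner region $s<s_{*}$ the maximum in the denominator is $\sqrt{s}\,\mu_0(t)R(t)$, contributing $O(\mu_0^{\nu_2}(t))$ after integration. In the intermediate region $s_{*}<s<T$ the maximum equals $s$; since $\mu_0^{\nu_2}(t-s)\le \mu_0^{\nu_2}(0)\sim T^{\nu_2}|\log T|^{-\nu_2}$ and $\int_{s_{*}}^{T} ds/s \sim |\log T|$, this region produces the dominant contribution $T^{\nu_2}|\log T|^{1-\nu_2}$. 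The tail $s\ge T$, where $g_2(\cdot,t-s)$ reduces to its initial value, contributes a lower-order piece via $\int_{T}^{\infty} s^{-3/2}\,ds\lesssim T^{-1/2}$ against the same weight. For the time-difference bound I would use
$$\psi(x,t)-\psi(x,T)=\int_0^\infty\!\!\int_{\mathbb{R}^3} K(\tilde{x}-\tilde{y},x_4,s)\,\bigl[g_2(\tilde{y},t-s)-g_2(\tilde{y},T-s)\bigr]\, d\tilde{y}\, ds$$
and apply the same region decomposition, with the relevant time scale in the intermediate region now equal to $T-t$ rather than $T$; combining $\mu_0^{\nu_2}(t)\lesssim (T-t)^{\nu_2}|\log(T-t)|^{-\nu_2}$ with $\log[T/(T-t)] \sim |\log(T-t)|$ then yields the factor $(T-t)^{\nu_2}|\log T|^{\nu_2}|\log(T-t)|^{1-2\nu_2}$. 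For $\nabla_{\tilde{x}}\psi$ I would differentiate under the integral; the extra $s^{-1/2}$ shifts the dominant contribution to the inner region $s\sim s_{*}$, where the integrand has size $\mu_0^{\nu_2-3}(t)R^{-3}(t)$, and integration over an interval of length $\mu_0^{2}(t)R^{2}(t)$ produces exactly $\mu_0^{\nu_2-1}(t)/R(t)$.

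The principal obstacle is that the assumed bound on $g_2$ has no spatial decay, while $\|K(\cdot,x_4,s)\|_{L^{1}_{\tilde{z}}}\sim s^{-1/2}$; a naive time integration $\int s^{-1/2}\,ds$ therefore diverges at infinity. The argument hinges on exploiting the Gaussian localization of $K$ to restrict the spatial convolution to an effective ball of radius $\sqrt{s}$, which sharpens the decay to $s^{-1}$ for large $s$. Coordinating this decay with the regions where $\mu_0(t-s)$ is essentially constant, respecting the $\mu_0(t-s)R(t-s)$ cutoff, and tracking the precise logarithmic powers across all $s$-regions (especially in the time-difference estimate) is the delicate point.
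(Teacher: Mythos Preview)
Your overall strategy—Duhamel representation with the boundary kernel $K$ from \eqref{outer3}, the spatial convolution bound obtained by self-similar rescaling, and the splitting of the $s$-integration at the scales $\mu_0^2 R^2$, $T-t$, $T$—is exactly the route the paper indicates (it omits the proof of this lemma and points to the computation done for Lemma~\ref{25Aug-lemma5.6} together with \cite[Section~15]{Davila-del-Pino-Wei2020}). Your treatment of the regions $s\in(0,\mu_0^2R^2)$ and $s\in(\mu_0^2R^2,T)$ is fine and does produce the dominant factor $T^{\nu_2}|\log T|^{1-\nu_2}$ for the pointwise bound, as well as the correct $(T-t)^{\nu_2}|\log T|^{\nu_2}|\log(T-t)|^{1-2\nu_2}$ for the time increment and $\mu_0^{\nu_2-1}/R$ for the gradient.

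There is, however, a genuine gap in your handling of the tail $s\ge T$ (equivalently, the piece $I_2=\int_t^\infty$ in the decomposition used for Lemma~\ref{25Aug-lemma5.6}). You assert that this region ``contributes a lower-order piece via $\int_T^\infty s^{-3/2}\,ds$'', but your own convolution estimate gives only
\[
\int_{|\tilde y-\tilde\xi|\ge \mu_0 R}\frac{e^{-|\tilde x-\tilde y|^2/(4s)}}{s^2\,|\tilde y-\tilde\xi|}\,d\tilde y \;\lesssim\; \frac{1}{s}\qquad\text{for } s\gg 1,
\]
not $s^{-3/2}$; indeed you say as much in your final paragraph (``sharpens the decay to $s^{-1}$ for large $s$''). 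With this $s^{-1}$ bound, $\int_T^\infty s^{-1}\,ds$ diverges logarithmically, and if one instead swaps the order of integration as the paper does for $I_2$ in Lemma~\ref{25Aug-lemma5.6}, one is left with
\[
\mu_0^{\nu_2}(0)\int_{|\tilde w-\tilde\xi(0)|\ge \mu_0(0)R(0)}\frac{d\tilde w}{|\tilde w-\tilde\xi(0)|\,\bigl(|\tilde x-\tilde w|^2+x_4^2\bigr)},
\]
whose integrand decays only like $|\tilde w|^{-3}$ at infinity and is therefore not integrable in $\mathbb{R}^3$. Under the bare hypothesis \eqref{Aug2-4}—only $|\tilde x-\tilde\xi|^{-1}$ spatial decay—this tail is genuinely infinite. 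To close the argument you must invoke something beyond \eqref{Aug2-4}: either the extra decay actually present in $\mathcal G_2$ (the factor $U(\tilde y,0)\sim(1+|\tilde y|^2)^{-1}$ gives $|\tilde x-\tilde\xi|^{-2}$, which does yield $s^{-3/2}$ in the convolution and makes your tail finite), or an explicit spatial truncation of $g_2$. Your proposal does not supply this missing ingredient, and the $s^{-3/2}$ claim is inconsistent with your stated convolution bound.
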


\begin{proof}[Proof of Lemma \ref{25Aug-lemma5.6}]
Decompose the solution $\psi(\tilde{x},x_4,t)$ as
\begin{align*}
|\psi(\tilde{x},x_{4},t)|&
\leq\left|\int_{0}^{t}\int_{\mathbb{R}^3}K(\tilde{z},x_{4},s)g_{2}(\tilde{x}-\tilde{z},t-s)d\tilde{z}ds\right|
\\&\quad+\left|\int_{t}^{\infty}\int_{\mathbb{R}^3}K(\tilde{z},x_{4},s)g_{2}(\tilde{x}-\tilde{z},t-s)d\tilde{z}ds\right|
\\&:=I_{1}+I_{2}.
\end{align*}

For $I_1$, using the bound on $g_2(\tilde{x},s)$, we have
\begin{align*}
I_{1}&=\left|\int_{0}^{t}\int_{\mathbb{R}^3}K(\tilde{x}-\tilde{z},x_{4},t-s)g_{2}(\tilde{z},s)d\tilde{z}ds\right|
\\&\lesssim \int_{0}^{t}\mu_0^{\nu-2}(s)R^{-1-\alpha}(s)\int_{\{|\tilde{z}-\tilde{\xi}(s)|\leq 2\mu_0(s)R(s)\}}K(\tilde{x}-\tilde{z},x_4,t-s)d\tilde{z}ds
\\&\lesssim\int_{0}^{t}\frac{\mu_0^{\nu-2}(s)R^{-1-\alpha}(s)}{(t-s)^2}\int_{\{|\tilde{z}-\tilde{\xi}(s)|\leq 2\mu_0(s)R(s)\}}e^{\frac{-|\tilde{x}-\tilde{z}|^{2}}{4(t-s)}}d\tilde{z}ds.
\end{align*}
Applying the change of variables
\begin{align*}
\hat{z} = \frac{\tilde{z}-\tilde{\xi}(s)}{2\sqrt{t-s}}, \quad \hat{x} = \frac{\tilde{x}-\tilde{\xi}(s)}{2\sqrt{t-s}},
\end{align*}
yields
\begin{equation*}
I_1 \lesssim \int_{0}^{t} \frac{\mu_0^{\nu-2}(s) R^{-1-\alpha}(s)}{\sqrt{t-s}} \int_{|\hat{z}| \leq \mu_0(s) R(s) (t-s)^{-1/2}} e^{-|\hat{x} - \hat{z}|^2} d\hat{z}ds.
\end{equation*}
By analogous estimates to those used for \eqref{outer24}, we find
$$
I_1 \lesssim \mu_0^{\nu-1}(0)R^{-\alpha}(0).
$$

For $I_{2}$, using the assumption $g_{2}(\tilde{x},t) \equiv g_{2}(\tilde{x},0)$ for $t < 0$, then for $s > t$, there holds
\begin{align*}
I_{2}&=\left|\int_{t}^{\infty}\int_{\mathbb{R}^3}K(\tilde{z},x_4,s)g_{2}(\tilde{x}-\tilde{z},0)d\tilde{z}ds\right|
\\&\lesssim \mu_0^{\nu-2}(0)R^{-1-\alpha}(0)\int_{\{|\tilde{z}-\tilde{\xi}(0)|\leq 2\mu_0(0)R(0)\}}\int_{t}^{\infty}\frac{e^{\frac{-|\tilde{z}|^{2}}{4s}}}{s^2}dsd\tilde{z}
\\&\lesssim \mu_0^{\nu-2}(0)R^{-1-\alpha}(0)\int_{\{|\tilde{z}-\tilde{\xi}(0)|\leq 2\mu_0(0)R(0)\}}\frac{1}{|\tilde{z}|^2}d\tilde{z}
\\&\lesssim \mu_0^{\nu-1}(0)R^{-\alpha}(0),
\end{align*}
where we have used the integral estimate
$$\int_{t}^{\infty}\frac{e^{\frac{-|\tilde{z}|^{2}}{4s}}}{s^2}ds\lesssim|\tilde{z}|^{-2}.$$
Combining the estimates for $I_{1}$ and $I_{2}$, we obtain
$$|\psi(x,t)|\lesssim \mu_0^{\nu-1}(0)R^{-\alpha}(0).$$
The proof of \eqref{Aug7-2} follows similar arguments and is omitted here.
\end{proof}

\section{The linear theory of the inner problem}\label{linear-inner}
In this section, we develop a linear theory for the inner problem \eqref{Jul-inner-problem}. To analyze \eqref{Jul-inner-problem}, we first consider the linearized equation
\begin{equation}\label{Aug8-4}
\begin{cases}
\mu^{2} \phi_{t}=\Delta_{y} \phi+g(y,t) &\text { in } \ B_{2R}^{+} \times(0, T),  \\
-\frac{d \phi}{d y_{4}}(\tilde{y}, 0, t)=2U(\tilde{y}, 0) \phi(\tilde{y}, 0, t)+h(\tilde{y},0, t)  &\text { in }\ (\partial B_{2R}^{+}\cap\mathbb{R}^{3}) \times(0, T),
\end{cases}
\end{equation}
where
$$B_{2 R}^{+}:=\{y=(\tilde{y},y_4)\mid \tilde{y}\in \mathbb{R}^3,\ y_4\geq 0,\ |y|\leq 2R(t)\}.$$
We shall construct a solution to the following initial value problem
\begin{equation}\label{Aug8-1}
\begin{cases}
\mu^{2} \phi_{t}=\Delta_{y} \phi+g(y,t) &\text { in } \ B_{2R}^{+} \times(0, T),  \\
-\frac{d \phi}{d y_{4}}(\tilde{y}, 0, t)=2U(\tilde{y}, 0) \phi(\tilde{y}, 0, t)+h(\tilde{y},0, t)  &\text { in }\ (\partial B_{2R}^{+}\cap\mathbb{R}^{3}) \times(0, T).
\end{cases}
\end{equation}
Our goal is to construct a solution to \eqref{Aug8-1} that defines a bounded linear operator of $g$ and $h$, satisfying favorable bounds in appropriate weighted norms.
For any $0<\sigma<1$ and $a>0$, we introduce the weighted norms
\begin{align} \label{Aug17-5}
&\|g\|_{a+\sigma,\nu}:= \sup_{(y,t)\in B_{2R}^{+} \times(0, T)} \mu_0^{-\nu}(t)(1+|y|^{a+\sigma}) |g(y,t)|, \notag
\\&\|h\|_{a+\sigma,\nu}:= \sup_{(\tilde{y},t)\in (\partial B_{2R}^{+}\cap\mathbb{R}^{3})\times(0, T)} \mu_0^{-\nu}(t)(1+|\tilde{y}|^{a+\sigma}) |h(\tilde{y},t)|.
\end{align}
Moreover, orthogonality conditions are imposed on $g$ and $h$. Specifically, we seek a solution $\phi$ to \eqref{Aug8-1} that satisfies
\begin{equation*}
 \int_{B_{2R}^{+}} g(y, t)Z_{i}(y) d y+\int_{\partial B_{2R }^{+}\cap{\mathbb{R}}^3} h(\tilde{y}, 0, t) Z_{i}(\tilde{y},0) d \tilde{y}=0 \quad \text{for}~ i=1,\dots,4,~t\in(0,T).
\end{equation*}

As detailed below, our construction involves decomposing the equation into distinct spherical harmonic modes. Let $\{\Theta_{m}\}_{m=0}^{\infty}$ be an orthonormal basis of $L^{2}(\mathbb S^{2})$ consisting of spherical harmonics, which satisfy the eigenvalue problem
$$\Delta_{\mathbb S^{2}}\Theta_{m}+\tilde{\lambda}_{m}\Theta_{m}=0 \quad \text{in}\  \mathbb S^{2},$$
where the eigenvalues are ordered as
$$0=\tilde{\lambda}_0<\tilde{\lambda}_{1}=\cdots=\tilde{\lambda}_3=2<\tilde{\lambda}_4\leq\cdots.$$
Specifically,
$$\Theta_{0}(\tilde{y})=a_{0},\quad \Theta_{j}(\tilde{y})=a_{1}\tilde{y}_{j},~j=1,2,3,$$
for constants $a_{0}$ and $a_{1}$. In general, all eigenvalues eigenvalues take the form $\tilde{\lambda}_{m}=l(1+l)$ for some nonnegative integer $l$.

Given $g\in L^{2}(B_{2R}^{+})$ and $h\in L^{2}(\partial B_{2R}^{+}\cap\mathbb{R}^{3})$, we decompose them into spherical harmonic components,
\begin{align*}
g(\tilde{y},y_4,t)&=\sum_{j=0}^{\infty}g_{j}(r,y_4,t)\Theta_{j}\left(\frac{\tilde{y}}{r}\right),\quad r=|\tilde{y}|,
\\g_{j}(r,y_4,t)&=\int_{S^{2}}g(r\theta,y_4,t)\Theta_{j}(\theta)d\theta,
\\h(r,t)&=\sum_{j=0}^{\infty}h_{j}(r,\tau)\Theta_{j}\left(\frac{\tilde{y}}{r}\right),\quad h_{j}(r,t)=\int_{S^{2}}h(r\theta,t)\Theta_{j}(\theta)d\theta,
\end{align*}
and write $g=g^{0}+g^{\perp}$, $h=h^{0}+h^{\perp}$ with
\begin{align*}
&g^{0}=g_{0}(r,y_4,t),\quad  g^{\perp}=\sum_{j=1}^{\infty}g_{j}(r,y_4,t)\Theta_{j},
\\&
h^{0}=h_{0}(r,t),\qquad\quad  h^{\perp}=\sum_{j=1}^{\infty}h_{j}(r,t)\Theta_{j}.
\end{align*}
Analogously, we decompose the unknown solution $\phi=\phi^{0}+\phi^{\perp}$. Solving \eqref{Aug8-1} then reduces to finding pairs $(\phi^{0},g^{0},h^{0})$, $(\phi^{\perp},g^{\perp},h^{\perp})$ in each
mode. We introduce the time change
\begin{align}\label{Aug8-3}
t=t(\tau),\quad \frac{dt}{d\tau}=\mu_{0}^{2}(t),
\end{align}
which transforms \eqref{Aug8-1} into
\begin{equation}\label{Aug8-2}
\begin{cases}
\phi_{\tau}=\Delta_{y} \phi+g(y,\tau) &\text { in } \ B_{2R}^{+} \times(\tau_0, \infty),  \\
-\frac{d \phi}{d y_{4}}(\tilde{y}, 0, \tau)=2U(\tilde{y}, 0) \phi(\tilde{y}, 0, \tau)+h(\tilde{y},0,\tau)  &\text { in }\ (\partial B_{2R}^{+}\cap\mathbb{R}^{3}) \times (\tau_0, \infty),\\
\end{cases}
\end{equation}
where $\tau_0$ satisfies $t(\tau_0)=0$.

We now construct solutions to the initial-boundary value problems for $(\phi^{0},g^{0},h^{0})$ and $(\phi^{\perp},g^{\perp},h^{\perp})$.
\begin{equation}\label{inner4.4}
\begin{cases}
\phi^{0}_{\tau}=\Delta \phi^{0}+g^{0}(y, \tau)-c(\tau) Z_{0}(y)  & \text{in}\ B_{2 R}^{+} \times\left(\tau_{0}, \infty\right), \\
\begin{aligned}
-\frac{d \phi^{0}}{d y_{4}}(\tilde{y}, 0, \tau)=&~2U(\tilde{y}, 0) \phi^{0}(\tilde{y}, 0, \tau)+h^{0}(\tilde{y}, 0, \tau)
\end{aligned}
& \text{in}\ (\partial B_{2 R}^{+}  \cap\mathbb{R}^3) \times\left(\tau_{0}, \infty\right), \\
\phi^{0}\left(y, \tau_{0}\right)=0 & \text{in}\ B_{2 R(0)}^{+},
\end{cases}
\end{equation}

\begin{equation}\label{inner4.6}
\begin{cases}
\phi^{\perp}_{\tau}=\Delta \phi^{\perp}+g^{\perp}(y, \tau) & \text{in}\ B_{2 R}^{+} \times\left(\tau_{0}, \infty\right), \\
-\frac{d \phi^{\perp}}{d y_{4}}(\tilde{y}, 0, \tau)=2U(\tilde{y}, 0) \phi^{\perp}(\tilde{y}, 0, \tau)+h^{\perp}(\tilde{y}, 0, \tau) &  \text{in}\ (\partial B_{2 R}^{+}  \cap\mathbb{R}^3) \times\left(\tau_{0}, \infty\right), \\ \phi^{\perp}\left(y, \tau_{0}\right)=0 & \text{in}\  B_{2 R(0)}^{+}.
\end{cases}
\end{equation}
The eigenfunction $Z_0(y)$ was studied in \cite{Wei-Ye-Zhang-Zeng} which decays at infinity exponentially. The main result of this section is as follows.
\begin{prop}\label{proposition4.2}
Suppose $\sigma_1$, $\sigma_2$, $\nu\in(0,1)$, and functions $g=g^{0}+g^{\perp}$, $h=h^{0}+h^{\perp}$ satisfy
$$\|g^{0}\|_{2+\sigma_1,\nu}<+\infty, \quad \|h^{0}\|_{1+\sigma_1,\nu}<+\infty,\quad \|g^{\perp}\|_{3+\sigma_2,\nu}<+\infty,\quad\|h^{\perp}\|_{2+\sigma_2,\nu}<+\infty$$ along with the orthogonality conditions
\begin{equation}\label{orthogonality2}
 \int_{B_{2R}^{+}} g(y, \tau)Z_{i}(y) d y+\int_{B_{2R }^{+}\cap{\mathbb{R}}^3} h(\tilde{y}, 0,\tau) Z_{i}(\tilde{y},0) d \tilde{y}=0
\end{equation}
for all $ \tau\in(\tau_0, \infty)$ and $i=1,\dots,4.$ Then there exist solutions $\phi^0=\phi^0 [g^0,h^0]$, $c=c[g^0,h^0]$ to \eqref{inner4.4} and $\phi^{\perp}[g^{\perp},h^{\perp}]$ to \eqref{inner4.6}, respectively, defining linear operators of $g=g^{0}+g^{\perp}$ and $h=h^{0}+h^{\perp}$. The combined solution $\phi[g,h]=\phi^{0}[g^0,h^0]+\phi^{\perp}[g^{\perp},h^{\perp}]$ and $c=c[g^0,h^0]$ satisfy the following estimates
 \begin{equation}\label{inner4:4}
|\phi[g, h]| \lesssim \mu_0^{\nu}(t) \left[\frac{R(t)^{4-\sigma_1}\|g^{0}\|_{2+\sigma_1,\nu}}{1+|y|^4}+\frac{R(t)^{3-\sigma_1}\|h^{0}\|_{1+\sigma_1,\nu}}{1+|y|^3}+\frac{\|g^{\perp}\|_{3+\sigma_2,\nu} }{1+|y|^{1+\sigma_2}}+\frac{\|h^{\perp}\|_{2+\sigma_2,\nu} }{1+|y|^{1+\sigma_2}}\right],
\end{equation}

\begin{equation}\label{inner7:4}
 \begin{aligned}
 &\left|c(\tau)-\left(\int_{B_{2R}^{+}}g^{0}(y, \tau)Z_{0}(y)dy+\int_{\partial{B}_{2R}^{+}\cap{\mathbb{R}}^3}h^{0}(\tilde{y}, 0, \tau)\widetilde{Z}_{0}(\tilde{y})d\tilde{y}\right)\right|
 \\& \lesssim \mu_0^{\nu}(t) R(t)^{1-\sigma_1}\left(\left\|g^{0}\right\|_{2+\sigma_1,\nu}+\left\|h^{0}\right\|_{1+\sigma_1,\nu}\right)
 \\&\quad+\mu_0^{\nu}(t) R(t)^{2-\sigma_1}\left\|g^{0}-Z_{0}\left(\int_{B_{2R}^{+}}g^{0}Z_{0}dy+\int_{\partial{B}_{2R}^{+}\cap{\mathbb{R}}^3}h^{0}\widetilde{Z}_{0}d\tilde{y}\right)\right\|_{2+\sigma_1,\nu},
 \end{aligned}
\end{equation}
where $R(t)=\mu_0^{-\beta}(t)>0$, $\beta\in (0,1/2)$.
\end{prop}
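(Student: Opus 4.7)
The plan is to invert the linearized operator in two separate spherical harmonic sectors, using the decomposition $g = g^0 + g^\perp$, $h = h^0 + h^\perp$ already set up in the excerpt. The key observation is that the three kernel elements $Z_1, Z_2, Z_3$ lie in the $\ell = 1$ spherical harmonic sector while $Z_4$ lies in the radial sector, so the orthogonality conditions \eqref{orthogonality2} remove exactly the projections onto the relevant kernel directions in each sector. Existence of $\phi^0$ and $\phi^\perp$ will then be obtained from a priori weighted $L^\infty$ estimates combined with the standard approximation procedure of solving \eqref{inner4.4}--\eqref{inner4.6} on $[\tau_0, \tau_0 + T_n]$ with $T_n \to \infty$ and extracting a convergent subsequence via parabolic regularity.

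The bound \eqref{inner4:4} for the orthogonal part $\phi^\perp$ is obtained by a blow-up/contradiction argument. Assume the estimate fails; normalize so that the weighted norm of $\phi^\perp_n$ equals one while $\|g^\perp_n\|_{3+\sigma_2,\nu} + \|h^\perp_n\|_{2+\sigma_2,\nu} \to 0$, and rescale around a point $(y_n, \tau_n)$ that achieves (or nearly achieves) the weighted supremum. Two asymptotic regimes arise. If $|y_n|$ stays bounded, the limit is a nontrivial bounded solution $\Phi$ of the full linearized system $L_U[\Phi] = 0$ on $\mathbb{R}^4_+$, which by the inherited orthogonality to $Z_1, Z_2, Z_3$ and by the nondegeneracy of $L_U$ must vanish. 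If $|y_n| \to \infty$, the limit is a bounded caloric function on $\mathbb{R}^4_+$ with homogeneous Neumann condition decaying like $|y|^{-(1+\sigma_2)}$, which is also forced to be trivial by a Liouville-type argument. The weighted decay inside $B_{2R}^+$ is then propagated by constructing an explicit supersolution built from $|y|^{-(1+\sigma_2)}$ and the prescribed decay of the data at spatial infinity.

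For the radial part $\phi^0$, the linearized operator carries the extra kernel element $Z_0$ (the exponentially decaying radial eigenfunction from \cite{Wei-Ye-Zhang-Zeng}), and solvability in the radial class requires the Lagrange multiplier $c(\tau)$. I would choose $c(\tau)$ so that the modified interior source $g^0 - c(\tau) Z_0$ together with the boundary datum $h^0$ is orthogonal to $Z_0$ in the pairing induced by Green's identity for $L_U$ on $B_{2R}^+$, that is,
\[
c(\tau) \int_{B_{2R}^+} Z_0^2 \, dy = \int_{B_{2R}^+} g^0(y,\tau) Z_0(y)\, dy + \int_{\partial B_{2R}^+ \cap \mathbb{R}^3} h^0(\tilde y, 0, \tau) \widetilde{Z}_0(\tilde y)\, d\tilde y + \mathcal{E}(\tau),
\]
where $\mathcal{E}(\tau)$ collects the boundary-layer correction arising from the fact that $Z_0$ does not exactly solve the homogeneous linearized problem on the truncated half-ball. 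The exponential decay of $Z_0$ controls $\mathcal{E}(\tau)$ with the prefactor $R(t)^{1-\sigma_1}$, which directly yields \eqref{inner7:4}. With this choice of $c(\tau)$ in place, the same blow-up argument as in the previous paragraph, now carried out in the radial class with orthogonality to both $Z_0$ and $Z_4$, produces the corresponding component of \eqref{inner4:4} for $\phi^0$.

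The main technical obstacle is the Robin-type boundary condition $-\partial_{y_4}\phi = 2U(\tilde y, 0)\phi + h$, whose coefficient $U(\tilde y, 0)$ is globally supported and couples the trace of $\phi$ at $y_4 = 0$ to its bulk harmonic extension. This must be tracked carefully in the blow-up analysis: the barriers have to be supersolutions of the extended system rather than merely of the interior heat equation, and in the inner limit one must preserve the full Caffarelli--Silvestre extension structure so that $L_U$ appears in the limiting problem. The classification of bounded solutions of $L_U[\Phi] = 0$ on $\mathbb{R}^4_+$ by the span of $Z_1, \ldots, Z_4$, which is exactly the input supplied by nondegeneracy of the bubble, is what makes both limits tractable and closes the contradiction argument in either regime.
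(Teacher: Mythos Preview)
Your treatment of $\phi^\perp$ is close to the paper's, but you compress a nontrivial step: in the bounded-$|y_n|$ regime the limit is an \emph{ancient parabolic} solution of $\partial_\tau\bar\phi=\Delta\bar\phi$ with the Robin boundary, not directly a solution of $L_U[\Phi]=0$. The paper (Lemma~\ref{lemma4}) spends work showing $\bar\phi_\tau\equiv 0$ via the energy identity $\frac{1}{2}\partial_\tau\!\int|\bar\phi_\tau|^2+B(\bar\phi_\tau,\bar\phi_\tau)=0$ together with $B(\bar\phi_\tau,\bar\phi_\tau)\ge 0$, which holds because $\bar\phi$ (and hence $\bar\phi_\tau$) lies in modes $j\ge 1$ and is therefore automatically orthogonal to the unique negative direction $Z_0$. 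Only after this does nondegeneracy apply.

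For $\phi^0$ the paper takes a genuinely different route, and your blow-up strategy has a real gap. In the radial sector $Z_0$ is present, so $e^{\lambda_0\tau}Z_0$ is a bounded ancient solution and the energy argument above fails: $B(\cdot,\cdot)$ is no longer nonnegative on radial functions, and your choice of $c(\tau)$ at leading order does not enforce pointwise-in-$\tau$ orthogonality of $\phi^0$ to $Z_0$ in a way that survives the limit. The paper avoids this entirely. It first proves a quantitative Poincar\'e inequality $\frac{\gamma}{R^2}\int_{B_{2R}^+}\phi^2\le B(\phi,\phi)$ on radial functions orthogonal to $Z_0$ (Lemma~\ref{lemma4.2}), then uses a constructive two-step scheme (Lemma~\ref{lemma4.1}): solve a modified problem with the potential cut off near the origin by barriers, and correct by an energy argument exploiting the Poincar\'e gap, with $c(\tau)$ determined dynamically so that the correction stays orthogonal to $Z_0$. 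This yields only $|\phi|\lesssim \tau^{-\tilde\nu}\frac{R^2}{1+|y|^2}\Theta_R^\alpha$; the sharper profile $R^{4-\sigma_1}/(1+|y|^4)$ in \eqref{inner4:4} comes from a further bootstrap (Lemma~\ref{lemma4.3}) in which one applies $L_U^{-1}$ to the data, runs Lemma~\ref{lemma4.1} on the smoother right-hand side, obtains second-derivative bounds by local parabolic regularity, and recovers $\phi^0$ as $L_U[\Phi]$. Your proposal does not contain this mechanism, and a direct blow-up argument would not naturally produce the $R^{4-\sigma_1}/(1+|y|^4)$ structure. Likewise, the two-scale form of \eqref{inner7:4}, with both $R^{1-\sigma_1}$ and $R^{2-\sigma_1}$ contributions, emerges from this construction rather than from a boundary-layer remainder of the type you describe.
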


The construction of the operator $\phi[g,h]$ described in the proposition will be carried out mode by mode. And for $\nu>0$ specified in the proposition, relation \eqref{Aug8-3} allows us to choose $\tilde{\nu}>0$ such that
$$\mu_0^{\nu}(t)\sim \tau^{-\tilde{\nu}}.$$

\subsection{The case of mode $j=0$}\label{4.1}
In this subsection, we construct the solution $\phi^{0}$ for problem \eqref{inner4.4}. We assume $\phi(\tilde{y},y_4,\tau)$ is radially symmetric with respect to the variable $\tilde{y}$. Correspondingly, we suppose the functions $g(\tilde{y},y_4,\tau)$ and $h(\tilde{y},\tau)$ are also radially symmetric in $\tilde{y}$. Let $g^{0}=g_{0}(r,y_4,\tau)$, $h^{0}=h_{0}(r,\tau)$, we then consider the following initial-boundary value problem.

\begin{equation}\label{inner8:2}
\begin{cases}
\phi_{\tau} = \Delta \phi + g^{0}(y, \tau) - c(\tau) Z_{0}(y) & \text{in } B_{2R}^{+} \times (\tau_{0}, \infty), \\
\begin{aligned}
-\frac{d \phi}{d y_4}(\tilde{y}, 0, \tau)
    = ~ & 2U(\tilde{y}, 0) \phi(\tilde{y}, 0, \tau)  + h^{0}(\tilde{y}, 0, \tau)
\end{aligned}   & \text{in } (\partial B_{2R}^{+} \cap \mathbb{R}^3) \times (\tau_{0}, \infty), \\
\phi(y, \tau_{0}) = 0  & \text{in } B_{2R(0)}^{+}, \\
\phi(\tilde{y}, \tau) = 0  & \text{in } \partial B_{2R}^{+} \backslash (\partial B_{2R}^{+} \cap \mathbb{R}^3) \times (\tau_{0}, \infty).
\end{cases}
\end{equation}

\begin{lemma}\label{lemma4.1}
Let $-2<\alpha<1$, $\nu>0$ and $\tilde\nu>0$ with $\mu_0^\nu(t)\sim\tau^{-\tilde{\nu}}$. Then, for $R(t)=\mu_0^{-\beta}(t)>0$, $\beta\in (0,1/2)$ and given $g^{0}$, $h^{0}$ satisfying $\|g^{0}\|_{2+\alpha,\nu}<+\infty$, $\|h^{0}\|_{1+\alpha,\nu}<+\infty$, there exist solutions $\phi=\phi (\tilde{y},y_4,\tau)$ and $c(\tau)$ to problem \eqref{inner8:2} that define linear operators of $g^{0}$ and $h^{0}$ and satisfy the estimates
\begin{equation}\label{inner9:2}
(1+|y|)|\nabla \phi(y,\tau)|+|\phi(y,\tau)|\lesssim \tau^{-\tilde{\nu}} \frac{R(t)^{2}}{1+|y|^{2}}\Theta_{R}^{\alpha}\left(\|g^{0}\|_{2+\alpha, \nu}+\|h^{0}\|_{1+\alpha,\nu}\right).
 \end{equation}
and
\begin{equation}\label{inner10:2}
\begin{aligned}
&\left|c(\tau)-\frac{\int_{B_{2R}^{+}}g^{0}(y,\tau)Z_{0}(y)dy}{\int_{B_{2R}^{+}}Z_{0}^{2}dy}\right|
\\&\lesssim\tau^{-\tilde{\nu}} \left(\left\|h^{0}\right\|_{1+\alpha,\nu}+\left\|g^{0}-Z_{0}\int_{B_{2R}^{+}}g^{0}Z_{0}dy\right\|_{2+\alpha,\nu}\right),
\end{aligned}
\end{equation}
where
\begin{align}\label{Oct3-1}
\Theta_{R}^{\alpha}:=\begin{cases}\frac{1}{1+|y|^{\alpha}} & \text{if}\  \alpha>0,
\\ \log R & \text{if}\   \alpha=0,\\
R^{-\alpha} & \text{if}\  \alpha<0.
\end{cases}
\end{align}
\end{lemma}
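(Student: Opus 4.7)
The plan is: fix $c(\tau)$ by projecting the equation against the exponentially decaying eigenfunction $Z_0$, solve the resulting parabolic mixed-boundary value problem by standard existence theory, and derive the pointwise decay \eqref{inner9:2} by constructing a time-independent supersolution that mimics the elliptic model problem.

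First I would multiply the first equation of \eqref{inner8:2} by $Z_0$ and integrate over $B_{2R}^+$. Because $L_U[Z_0]=0$ (that is, $\Delta Z_0 = 0$ in $\mathbb{R}_+^4$ and $-\partial_{y_4} Z_0(\tilde y,0) = 2U(\tilde y,0) Z_0(\tilde y,0)$), Green's identity combined with the Robin condition on $\{y_4=0\}$ and the Dirichlet condition on the curved part of $\partial B_{2R}^+$ (where $Z_0$ is exponentially small) yields
\begin{equation*}
\frac{d}{d\tau} \int_{B_{2R}^+} \phi Z_0 \, dy = \int_{B_{2R}^+} \bigl[g^0 - c(\tau) Z_0\bigr] Z_0 \, dy + \int_{\partial B_{2R}^+ \cap \mathbb{R}^3} h^0 Z_0 \, d\tilde y + O(e^{-cR}).
\end{equation*}
Requiring $\int \phi Z_0 \, dy \equiv 0$ (which is consistent with the zero initial data) forces the explicit choice of $c(\tau)$ appearing in \eqref{inner10:2}. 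Since $Z_0$ decays exponentially, the boundary integral is controlled by $\tau^{-\tilde\nu}\|h^0\|_{1+\alpha,\nu}$, and splitting $g^0$ into its $Z_0$-component and the orthogonal remainder gives the second term in \eqref{inner10:2}. Once $c(\tau)$ is fixed, existence of $\phi$ on $(\tau_0,\infty)$ follows from standard Galerkin approximation for the linear parabolic equation with Robin/Dirichlet mixed boundary conditions in the smooth bounded domain $B_{2R}^+$, and the operator $(g^0,h^0)\mapsto \phi$ is automatically linear.

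The heart of the argument is the weighted decay estimate \eqref{inner9:2}. I would construct a time-independent supersolution $\omega(y) \asymp R^2/(1+|y|^2)$ on $B_{2R}^+$ satisfying $-\Delta\omega \gtrsim (1+|y|)^{-2-\alpha}$, $-\partial_{y_4}\omega - 2U\omega \gtrsim (1+|\tilde y|)^{-1-\alpha}$, and $\omega=0$ on the curved outer boundary. The three regimes in \eqref{Oct3-1} arise because the elliptic inverse of model right-hand sides of size $(1+|y|)^{-2-\alpha}$ under these mixed boundary conditions behaves like a bounded function, $\log R$, or $R^{-\alpha}$ according as $\alpha>0$, $\alpha=0$, or $\alpha<0$, reflecting the convergence or divergence of $\int_1^R r^{1-\alpha}\,dr$. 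The parabolic comparison principle (using $2U\ge 0$ on the boundary, zero initial data, and $\phi=0$ on the curved boundary) then yields \eqref{inner9:2}, and the gradient bound follows from standard parabolic interior and boundary Schauder estimates applied on dyadic annuli $\{|y|\sim 2^k\}$.

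The main obstacle is constructing a single barrier satisfying all three of the above properties simultaneously: it must (i) have the interior decay $(1+|y|)^{-2}$ dictated by $-\Delta$, (ii) be compatible with the Robin coefficient $2U(\tilde y,0)=2\alpha_0/(1+|\tilde y|^2)^2$ so that the boundary source with weight $(1+|\tilde y|)^{-1-\alpha}$ is absorbed, and (iii) produce the correct $R$-dependence across the three $\alpha$ regimes of \eqref{Oct3-1}. The projection step is essential here: without the $c(\tau) Z_0$ subtraction the right hand side would contain an $O(1)$ contribution in the direction of the exponentially localized $Z_0$, which no barrier decaying polynomially as $R^2/(1+|y|^2)$ can absorb.
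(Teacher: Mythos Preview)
Your plan has a genuine gap at the barrier/comparison step, and it stems partly from a misidentification of $Z_0$. In this paper $Z_0$ is \emph{not} in the kernel of $L_U$: it is the first eigenfunction, satisfying $\Delta Z_0=\lambda_0 Z_0$ in $\mathbb R^4_+$ with the Robin condition on $\{y_4=0\}$ and $\lambda_0\neq 0$, and $Z_0$ decays exponentially. The kernel of $L_U$ is spanned by $Z_1,\dots,Z_4$. So your identity ``$\Delta Z_0=0$'' is false, and more importantly the presence of this negative direction (together with the kernel element $Z_4$, which is radial in $\tilde y$) is exactly what breaks the direct comparison argument you propose.

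Concretely, the Robin condition $-\partial_{y_4}\phi=2U(\tilde y,0)\phi+h$ reads $\partial_\nu\phi-2U\phi=h$ in terms of the outward normal on $\{y_4=0\}$; since $-2U\le 0$, this is the wrong sign for the parabolic maximum principle. Any barrier $\omega\asymp R^2/(1+|y|^2)$ will satisfy $2U(\tilde y,0)\,\omega(\tilde y,0)\asymp R^2/(1+|\tilde y|^2)^2$, which for bounded $|\tilde y|$ is of order $R^2$ and cannot be dominated by $-\partial_{y_4}\omega$ plus a source of size $(1+|\tilde y|)^{-1-\alpha}$. Projecting out $Z_0$ removes the unstable direction from the \emph{data} but does not restore the comparison principle for the operator.

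The paper's proof handles this in two stages. First it replaces the Robin coefficient $2U$ by the truncated $(1-\eta_l)2U$, which is supported in $\{|\tilde y|\ge l\}$ and hence uniformly small; for this modified problem a barrier $c_1\tau^{-\tilde\nu}(p_1+p_2)$ built from elliptic model solutions does work and produces a first approximation $\phi_*[\bar g,\bar h]$ with $\bar g=g^0-c_0(\tau)Z_0$. The discrepancy $\tilde\phi=\phi-\phi_*$ then solves the full Robin problem but with right-hand side $\tilde h=2U\eta_l\phi_*$, which is compactly supported. For $\tilde\phi$ the paper uses the coercivity estimate of Lemma~\ref{lemma4.2}, $B(\tilde\phi,\tilde\phi)\ge\gamma R^{-2}\int|\tilde\phi|^2$, together with an energy/Gronwall argument to obtain an $L^2$ bound, and then bootstraps to the pointwise bound $\tau^{-\tilde\nu}R^2/(1+|y|^2)$. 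The factor $R^2/(1+|y|^2)$ in \eqref{inner9:2} thus comes from Lemma~\ref{lemma4.2}, not from a barrier. If you want to salvage your outline, you need to incorporate this splitting (truncated coefficient plus energy estimate on the remainder), since a single global supersolution for the full Robin problem does not exist.
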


We will utilize a fundamental lemma concerning the quadratic form associated with the linear operator $L_{U}$:
\begin{align*}
B(\phi,\phi)&=-\int_{\mathbb{R}_{+}^4} \phi  \Delta \phi dy \\ &=\int_{\mathbb{R}_{+}^4}|\nabla \phi|^{2}dy+\int_{\mathbb{R}^3} \frac{d \phi}{d y_4}  \phi d \tilde{y} \\ &=\int_{\mathbb{R}^4_{+}}|\nabla \phi|^{2}dy-\int_{\mathbb{R}^3}2U(\tilde{y}, 0) \phi^{2}(\tilde{y}, 0, \tau)d\tilde{y}.
\end{align*}

The following lemma provides an estimate for the quadratic form $B(\phi,\phi)$ associated with the second $L^{2}$-eigenvalue in the half-ball $B_{2R}^{+} \subset \mathbb{R}^4_{+}$ under zero boundary conditions for large radius $R$.

Define the radial Sobolev space
$$H_{radial}^{1}(\mathbb{R}^{4}_+):=\left\{\phi(y)\in H^{1}(\mathbb{R}^{4}_+)\mid\phi(y)\ \text{is radially symmetric in the variable}\ \tilde y\right\}.$$
And the function space
$$
H_R := \left\{ \phi \in H_{radial}^{1}(\mathbb{R}^{4}_+) ~\Big|~
  \phi|_{\mathbb{R}^4_+ \setminus B_{2R}^+} = 0, \int_{\mathbb{R}^4_+}\phi Z_0dy = 0
\right\}.
$$

\begin{lemma}\label{lemma4.2}
There exists a constant $\gamma>0$ such that for every $\phi \in H_R$, there holds
\begin{equation}\label{inner4.13}
\frac{\gamma}{R^{2}}\int_{B_{2R}^{+}}|\phi|^{2}dy\leq B(\phi,\phi).
\end{equation}	
\end{lemma}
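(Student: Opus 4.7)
The plan is to argue by contradiction coupled with a rescaling procedure, exploiting the spectral structure of $L_U$ on $\mathbb{R}_{+}^{4}$: in the radial class, $L_U$ has a unique discrete negative eigenvalue with exponentially decaying eigenfunction $Z_0$, the resonance $Z_4\sim|y|^{-2}$ at zero (which fails to lie in $L^2(\mathbb{R}_+^4)$ because of its slow decay in four dimensions), and essential spectrum $[0,\infty)$.

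Suppose the inequality fails. Then there exist sequences $R_n>0$ and $\phi_n\in H_{R_n}$ with $\|\phi_n\|_{L^2(B_{2R_n}^+)}=1$ and $R_n^2 B(\phi_n,\phi_n)\to 0$. If $\{R_n\}$ stays bounded I pass to a subsequence with $R_n\to R_\star<\infty$ and work on the fixed domain $B_{2R_\star}^+$, where the form $B$ has compact resolvent; since the only radial bounded solution of $L_U[\phi]=0$ on $\mathbb{R}_+^4$ is proportional to $Z_4$ and this does not vanish on $\partial B_{2R_\star}^+\cap\{y_4>0\}$, the restriction of $B$ to $\{Z_0\}^\perp\cap H_{R_\star}$ is strictly positive, so Rellich compactness yields a nonzero $L^2$-limit $\phi_\star$ with $B(\phi_\star,\phi_\star)=0$ and $\phi_\star\perp Z_0$, contradiction. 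For the main case $R_n\to\infty$, rescale $\tilde\phi_n(z):=R_n^2\phi_n(R_nz)$ on $B_2^+$, so $\|\tilde\phi_n\|_{L^2(B_2^+)}=1$, $\tilde\phi_n=0$ on $\partial B_2^+\cap\{z_4>0\}$, and using $U(\tilde y,0)=2/(1+|\tilde y|^2)$ the rescaled form reads
\[
R_n^2 B(\phi_n,\phi_n)=\int_{B_2^+}|\nabla\tilde\phi_n|^2\,dz-4R_n\int_{B_2\cap\mathbb{R}^3}\frac{\tilde\phi_n(\tilde z,0)^2}{1+R_n^2|\tilde z|^2}\,d\tilde z.
\]

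The central step is a uniform $H^1$-bound for $\tilde\phi_n$. Splitting the boundary integral at the scale $|\tilde z|\sim R_n^{-\theta}$ for suitable $\theta\in(0,1)$, I would use the trace embedding $H^1(B_2^+)\hookrightarrow H^{1/2}(\mathbb{R}^3)\hookrightarrow L^3(\mathbb{R}^3)$ with H\"older on the near region, and the fractional Hardy inequality $\int u^2/|\tilde z|\,d\tilde z\lesssim\int_{B_2^+}|\nabla\tilde\phi_n|^2\,dz$ together with $R_n|\tilde z|/(1+R_n^2|\tilde z|^2)\le 1/2$ on the far region, to absorb the boundary term into a small multiple of $\|\nabla\tilde\phi_n\|_{L^2(B_2^+)}^2$ plus an $o(1)$ remainder. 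The orthogonality $\phi_n\perp Z_0$ enters decisively here: it precludes the mass of $\phi_n$ from concentrating near the origin on the scale $1/R_n$ (the profile of the ground state), which would otherwise push $R_n^2 B(\phi_n,\phi_n)$ to $-\infty$ rather than $0$. With the $H^1$-bound in hand, Rellich compactness delivers a strong $L^2$-limit $\tilde\phi_\star$ with $\|\tilde\phi_\star\|_{L^2(B_2^+)}=1$; the rescaled boundary potential $2R_nU(R_n\tilde z,0)\to 0$ pointwise, so $\tilde\phi_\star$ is harmonic on $B_2^+$ with homogeneous Dirichlet on the curved portion of $\partial B_2^+$ and homogeneous Neumann on $\{z_4=0\}\cap B_2$. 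Extending by even reflection across $\{z_4=0\}$ makes $\tilde\phi_\star$ harmonic on $B_2$ with zero Dirichlet data, forcing $\tilde\phi_\star\equiv 0$, contradicting $\|\tilde\phi_\star\|_{L^2}=1$.

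The hardest point is the rigorous absorption in the $H^1$-bound: because the resonance $Z_4\sim|y|^{-2}$ is not $L^2$-integrable, one cannot simply project it out, and the naive Hardy estimate $4R_n(\text{boundary})\le 2C_H\|\nabla\tilde\phi_n\|_{L^2}^2$ has a constant $C_H=\pi/2$ that is too large to absorb directly. The refined splitting above, combined with the $\phi_n\perp Z_0$ constraint, is what ultimately delivers a coercivity constant genuinely of order $R^{-2}$ rather than a weaker rate driven by the logarithmic normalization of the cutoff resonance $\chi_R Z_4$; tracking this interplay cleanly is the main technical challenge.
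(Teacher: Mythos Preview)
Your strategy is genuinely different from the paper's, and the gap you flag at the end is real and not closed. The paper never rescales the domain to unit size nor appeals to Rellich compactness. Instead it works directly with the minimizer $\phi_R$ of $B$ on $H_R$, which satisfies $\Delta\phi_R+\lambda_R\phi_R=c_R Z_0$ with the mixed boundary conditions, and assumes $\lambda_R=o(R^{-2})$ for contradiction. From the equation one gets $|c_R|\lesssim\lambda_R$, and the core of the argument is a pointwise weighted $L^\infty$ estimate
\[
\|(1+|y|^a)\phi_R\|_{L^\infty}\lesssim\|(1+|y|^{2+a})(\lambda_R\phi_R)\|_{L^\infty}+\|(1+|\tilde y|^{1+a})(c_R\widetilde Z_0)\|_{L^\infty},\qquad 0<a<1,
\]
proved by blow-up: if it fails along a sequence, one first shows $\phi_{R_m}\to 0$ on compacts (using nondegeneracy of $L_U$), then rescales around a point $y_m$ with $|y_m|\to\infty$ where the weighted sup is nearly attained, obtains a nontrivial limit $\bar\Phi$ on $\mathbb{R}^4_+\setminus\{0\}$ that is harmonic with homogeneous Neumann data and $|\bar\Phi|\le|y|^{-a}$, and kills it by a supersolution argument for $(-\Delta)^{1/2}$. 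Feeding the weighted bound back (using $\mathrm{supp}\,\phi_R\subset B_{2R}^+$) gives $\|\phi_R\|_{L^2}\lesssim R^2|c_R|\lesssim R^2\lambda_R\to 0$, contradicting $\|\phi_R\|_{L^2}=1$.

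On your gap: after rescaling, the orthogonality $\int\phi_n Z_0\,dy=0$ reads $\int_{B_2^+}\tilde\phi_n(z)Z_0(R_nz)\,dz=0$, i.e.\ orthogonality against a profile collapsing at scale $R_n^{-1}$. To convert this into the missing smallness in your absorption step you would need a quantitative spectral gap for $L_U$ on the concentration scale, on the subspace $\{Z_0\}^\perp$; but that is essentially the content of the lemma itself, so the argument as written is circular at precisely the point where the Kato--Hardy constant is too large. The paper avoids this entirely: by using the Euler--Lagrange equation and a pointwise blow-up at spatial infinity it never needs a uniform $H^1$ bound on a rescaled sequence, and the contradiction comes from decay, not from $L^2$ compactness.
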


\begin{proof}
Define
$$ \lambda_{R}:=\inf \left\{B(\phi,\phi)~\Big|~ \phi \in H_{R},\  \int_{B_{2R}^{+}}|\phi|^{2}dy=1 \right\}.$$
By standard compactness arguments \cite[Chapter 8]{Evans1990}, $\lambda_{R}$ is achieved by a radial minimizer $\phi_{R} \in H_{R}$ with $\int_{B_{2R}^{+}} |\phi_{R}|^{2} dy= 1$, satisfying
\begin{equation}\label{inner4.14}
\begin{cases}
\Delta\phi_{R} + \lambda_R \phi_{R} = c_{R} Z_0(y)  & \text{in}\ \mathbb{R}_{+}^{4},\\
-\frac{d\phi_{R}}{d y_{4}}(\tilde{y},0) -2U(\tilde{y}, 0)\phi_{R}(\tilde{y},0) = 0 &  \text{in}\ \mathbb{R}^{3},
\end{cases}
\end{equation}
for a suitable Lagrange multiplier $c_{R}$.
Combining the $L^{2}$-orthogonality condition of $\phi_R$ with respect to $\widetilde{Z}_{0}$, it follows that
$$B(\phi,\phi)\geq 0\quad \text{for all}~ \phi\in H_{radial}^{1}(\mathbb{R}^{4}_+),$$ and thus $\lambda_{R}\geq 0$.
Indeed, if $\lambda_{R} < 0$, then the condition $\phi_{R} |_{\mathbb{R}^4_{+}\backslash B_{2R}^{+} }=0$ implies $B(\phi_{R},\phi_{R}) < 0$. However, this would force $\phi_{R}$ to satisfy
$$
\phi_R(y) = k Z_0(y) \quad \text{for some constant } k \neq 0.
$$
Combining this with the orthogonality condition $\int_{\mathbb{R}^4_+}\phi Z_0dy = 0$, we deduce $\phi_{R} = 0$, contradicting $\int_{B_{2R}^{+}}|\phi_{R}|^{2}dy=1$.

To prove \eqref{inner4.13}, assume by contradiction that
\begin{equation}\label{inner4.15}
\lambda_{R}=o(R^{-2})\quad \text{as} \ R\to \infty.
\end{equation}
Testing \eqref{inner4.14} against $Z_{0}(y)$ and integrating in $\mathbb{R}^4_{+}$, we get
$$c_{R}\int_{\mathbb{R}^4_+}Z_{0}^{2}dy=\lambda_{R}\int_{\mathbb{R}^4_{+}}\phi_{R}Z_{0}dy.$$
Since $\|\phi_{R}\|_{L^{2}(\mathbb{R}^4_+)}=1$,  this implies $|c_{R}|\lesssim \lambda_R$.

On the other hand, again since $\|\phi_{R}\|_{L^{2}(\mathbb{R}^4_+)}=1$, regularity estimates yield
$$\|\phi_{R}\|_{L^{\infty}(\mathbb{R}^4_+)}\lesssim 1.$$
Moreover, testing \eqref{inner4.14} against $Z_4$ and integrating in $\mathbb{R}^4_+$ yields $\int_{\mathbb{R}^4_+}\phi_{R}Z_4dy=0$
when $\lambda_{R}>0.$

{\bf{Claim 1:}} $\lambda_{R}\neq0$.

Observe that $\lambda_{R}$ decreases with $R$.
Suppose $\lambda_{R_1} = 0$ for some $R_1 > 0$. Then $B(\phi_{R_1}, \phi_{R_1}) = 0$. For any $R_2 > R_1$, since $\phi_{R_1} \in H_{R_2}$, we have
$$\lambda_{R_{2}}=0=B(\phi_{R_1},\phi_{R_1}).$$
Taking $R_{2}\to +\infty$ yields
$$\phi_{R_{1}}\in H_{\infty},\quad \lambda_{\infty}=0=B(\phi_{R_1},\phi_{R_1}),$$
where
$$H_{\infty}:=\left\{\phi\in H_{radial}^{1}(\mathbb{R}^4_+)~\Big|~  \int_{\mathbb{R}^4_+}\phi Z_0dy = 0\right\}.$$
However, in the whole space $\mathbb{R}^4_{+}$, $\lambda_{\infty}=0$ is achieved only by the function $cZ_4$ for some constant $c$. The condition $\phi_{R_{1}}|_{\mathbb{R}^4_{+}\backslash B_{2R}^{+} }=0$ forces $c=0$, contradicting $\int_{B_{2R}^{+}}|\phi_{R_{1}}|^{2}dy=1$.

Set $g_{R}=-\lambda_{R}\phi_{R}$ and $h_{R}=c_{R}\widetilde{Z}_{0}(\tilde{y})$. For any $0<a<1$, we aim to prove
\begin{equation}\label{inner4.16}
\|(1+|y|^{a})\phi_{R}\|_{L^{\infty}(\mathbb{R}^4_+)}\lesssim \|(1+|y|^{2+a})g_{R}\|_{L^{\infty}(\mathbb{R}^4_+)} + \|(1+|\tilde y|^{1+a})h_{R}\|_{L^{\infty}(\mathbb{R}^3)}.
\end{equation}
Suppose for contradiction that there exists a sequence $R_{m}\to +\infty$ such that
\begin{equation}\label{inner4.17:2}
\|(1+|y|^{a})\phi_{R_{m}}\|_{L^{\infty}(\mathbb{R}^4_+)}>m\left(\|(1+|y|^{2+a})g_{R_{m}}\|_{L^{\infty}(\mathbb{R}^4_+)}+ \|(1+|\tilde y|^{1+a})h_{R_{m}}\|_{L^{\infty}(\mathbb{R}^3)}\right),
\end{equation}
i.e.,
\begin{equation}\label{inner4.18:2}
\|(1+|y|^{2+a})\tilde{g}_{R_{m}}\|_{L^{\infty}(\mathbb{R}^4_+)}+ \|(1+|\tilde y|^{1+a})\tilde{h}_{R_{m}}\|_{L^{\infty}(\mathbb{R}^3)}<\frac{1}{m}
\end{equation}
holds for any $m\in \mathbb{N}^{+}$, where
$$\tilde{g}_{R_{m}}=\frac{g_{R_{m}}}{\|(1+|y|^{a})\phi_{R_{m}}\|_{L^{\infty}(\mathbb{R}^4_+)}},\quad \tilde{h}_{R_{m}}=\frac{h_{R_{m}}}{\|(1+|y|^{a})\phi_{R_{m}}\|_{L^{\infty}(\mathbb{R}^4_+)}}.$$
Let $$\tilde{\phi}_{R_{m}}=\frac{\phi_{R_{m}}}{\|(1+|y|^{a})\phi_{R_{m}}\|_{L^{\infty}(\mathbb{R}^4_+)}},$$
then $\|(1+|y|^{a})\tilde{\phi}_{R_{m}}\|_{L^{\infty}(\mathbb{R}^4_+)}=1$.

{\bf{Claim 2:}} For any compact set $K\subset\mathbb{R}^3$,
\begin{equation}\label{inner4.17}
\sup_{y\in K}(1+|y|^{a})|\tilde{\phi}_{R_{m}}(y)|\to 0\quad \text{as } m\to\infty.
\end{equation}

Suppose otherwise. Then there exists $y_{m}\in K$ satisfying
$$(1+|y_{m}|^{a})|\tilde{\phi}_{R_{m}}(y_{m})|\geq\frac{1}{2}.$$
Since
$$(1+|y_{m}|^{a})|\tilde{\phi}_{R_{m}}(y_{m})|\leq \|(1+|y|^{a})\tilde{\phi}_{R_{m}}\|_{L^{\infty}(\mathbb{R}^4_+)}=1,$$
and $\tilde{\phi}_{R_m}$ satisfies
\begin{equation*}
\begin{cases}
\Delta\tilde\phi_{R_{m}} + \lambda_{R_{m}} \tilde\phi_{R_{m}} = 0 & \text{in}\  \mathbb{R}_{+}^4,\\
-\frac{d\tilde\phi_{R_{m}}}{d y_4}(\tilde{y},0) -2U(\tilde{y}, 0)\tilde\phi_{R_{m}}(\tilde{y},0) = c_{R_{m}}\widetilde Z_0(\tilde{y}) &  \text{in}\  \mathbb{R}^3,\\
\tilde{\phi}_{R_{m}}(y)=0 & \text{in}\  \mathbb{R}^4_{+}\backslash B_{2R_{m}}^{+},
\end{cases}
\end{equation*}
with
$$\int_{\mathbb{R}^4_+}\tilde{\phi}_{R_{m}}Z_{0}dy=0,\quad \int_{\mathbb{R}_{+}^4}\tilde{\phi}_{R_{m}}Z_4dy=0,$$
we may pass to a subsequence such that $\tilde{\phi}_{R_{m}}\to \tilde{\phi}$ uniformly in any compact subset $K$ of $\mathbb{R}_+^4$ with $\tilde{\phi}\not\equiv 0$. By the arbitrariness of $K$, $\tilde{\phi}$ solves
\begin{equation*}
\begin{cases}
\Delta\tilde{\phi} = 0 & \text{in}~ \mathbb{R}_{+}^4,\\
-\frac{d\tilde{\phi}}{d y_4}(\tilde{y},0) -2U(\tilde{y}, 0)\tilde{\phi}(\tilde{y},0)=0 & \text{in}~ \mathbb{R}^3,\\
\end{cases}
\end{equation*}
and satisfies
$$\int_{\mathbb{R}^4_+}\tilde{\phi}Z_0dy=0,\quad \int_{\mathbb{R}_{+}^4}\tilde{\phi}Z_4dy=0.$$
The radial symmetry of $\tilde{\phi}$ in $\tilde{y}$ and the non-degeneracy of $L_U$ imply $\tilde{\phi} \equiv 0$, contradiction. Thus, \eqref{inner4.17} holds.

Since $\|(1+|y|^{a})\tilde{\phi}_{R_{m}}\|_{L^{\infty}(\mathbb{R}^4_{+})}=1$, there exists a sequence $\{y_m\}$ with $|y_m| \to \infty$ such that
$$|y_{m}|^{a}|\tilde{\phi}_{R_{m}}(y_{m})|\geq\frac{1}{2}.$$
Define a scaled function
$$\Phi_{R_{m}}(y)=|y_{m}|^{a}\tilde{\phi}_{R_{m}}(|y_{m}|y),$$
which satisfies
\begin{equation*}
\begin{cases}
\Delta\Phi_{R_{m}} + \lambda_{R_{m}}|y_{m}|^{2} \Phi_{R_{m}} = 0 & \text{in}~  \mathbb{R}_{+}^4,\\
-\frac{d\Phi_{R_{m}}}{d y_4}(\tilde{y},0) = H_{R_{m}} &\text{in}~\mathbb{R}^3,
\end{cases}
\end{equation*}
where
$$H_{R_{m}}(\tilde y)=|y_{m}|^{1+a}\left(h_{R_{m}}-2\widetilde{U}\tilde{\phi}_{R_{m}}\right)(|y_{m}|\tilde y).$$
By (\ref{inner4.17:2}) and (\ref{inner4.18:2}), we obtain
$$|H_{R_{m}}(\tilde y)|\lesssim o(1)|\tilde y|^{-1-a}.$$
Hence $H_{R_{m}}(\tilde y)\to 0$ uniformly in compact subsets of $\mathbb{R}^3\backslash \{0\}$. Note that
$$\Phi_{R_{m}}(\hat y_m)=|y_{m}|^{a}\left|\tilde{\phi}_{R_{m}}(y_{m})\right|\geq\frac{1}{2},\quad |\Phi_{R_{m}}(y)|\lesssim |y|^{-a},$$
where
$$\hat{y}_{m}=\frac{y_{m}}{|y_{m}|}\to \hat{e},\quad |\hat{e}|=1.$$
By compactness arguments and the assumption $\lambda_{R_{m}}|y_{m}|^{2} = o(1)$
in \eqref{inner4.15}, we conclude that (up to a subsequence) $\Phi_{R_m} \to \bar{\Phi} \not\equiv 0$ ($\bar{\Phi}(\hat e)\neq 0$) locally uniformly in $\mathbb{R}_{+}^4\backslash\{0\}$, and $\bar{\Phi}$ satisfies
\begin{equation*}
\begin{cases}
\Delta\bar{\Phi} = 0 & \text{in}~ \mathbb{R}_{+}^4\setminus\{0\},\\
-\frac{d\bar{\Phi}}{d y_4}(\tilde{y},0) = 0 & \text{in}~\mathbb{R}^3\setminus\{0\},\\
|\bar{\Phi}(\tilde y)|\leq |\tilde y|^{-a} & \text{in}~ \mathbb{R}^3\setminus\{0\},
\end{cases}
\end{equation*}
which is equivalent to $\bar{\phi}:=\bar{\Phi} |_{\mathbb{R}^3}$ satisfies
$$\begin{cases}
(-\Delta)^{\frac{1}{2}}\bar{\phi}(\tilde y)=0 & \text{in}~ \mathbb{R}^3\setminus\{0\},\\
|\bar{\phi}(\tilde y)|\leq |\tilde y|^{-a} & \text{in}~
\mathbb{R}^3\setminus\{0\}.
\end{cases}$$

Next, we prove $\bar{\phi}=0$. Without loss of generality, we consider
\begin{equation}\label{inner4.18}
\begin{cases}
(-\Delta)^{\frac{1}{2}}\bar{\phi}(\tilde y)=0 & \text{in}~ \mathbb{R}^3\setminus\{0\},\\
|\bar{\phi}(\tilde y)|\leq |\tilde y|^{-a} & \text{in}~ \mathbb{R}^3\setminus\{0\}.
\end{cases}
\end{equation}
Let $\epsilon>0$ be an arbitrary fixed constant, define $\Psi(\tilde{y})=\frac{\epsilon}{|\tilde y|^{2}}$, then $\Psi$ is a supersolution of problem \eqref{inner4.18}, implying
$$|\bar{\phi}(\tilde y)|\lesssim \frac{\epsilon}{|\tilde y|^{2}}.$$
Since $\epsilon>0$ is arbitrary, it follows that $\bar{\phi}=0$, and thus $\bar{\Phi}=0$, yielding a contradiction. Therefore, (\ref{inner4.16}) holds and we have
\begin{align*}
\left\|(1+|y|^{a})\phi_{R}\right\|_{L^{\infty}(\mathbb{R}^4_+)}
&\lesssim \left\|(1+|y|^{2+a})g_{R}\right\|_{L^{\infty}(\mathbb{R}^4_+)} + \left\|(1+|\tilde y|^{1+a})h_{R}\right\|_{L^{\infty}(\mathbb{R}^3)}\\
&\lesssim R^2\lambda_R\left\|(1+|y|^{a})\phi_{R}\right\|_{L^{\infty}(\mathbb{R}^4_+)}+|c_R|,
\end{align*}
which yields
\begin{equation}\label{inner4.19}
\|\phi_{R}\|_{L^{2}(B_{2R}^+)}\lesssim R^{2}|c_{R}|\lesssim R^{2}\lambda_{R}.
\end{equation}
By (\ref{inner4.15}), relation (\ref{inner4.19}) further implies
$$\|\phi_{R}\|_{L^{2}(B_{2R}^+)}=\|\phi_{R}\|_{L^{2}(\mathbb{R}^4_+)}\to 0 \quad \text{as} \ R\to +\infty,$$
contradicting $\int_{B_{2R}^{+}} |\phi_{R}|^{2}dy = 1$. This completes the proof of \eqref{inner4.13}.
\end{proof}

Let $\eta_{0}(s)$ be the smooth cut-off function defined in \eqref{estimate2.6}. For a large fixed $l \gg 1$ independent of $R(t)$, define $\eta_{l}(y) = \eta_{0}(|y| - l)$. Consider the following initial-boundary value problem for $-2<\alpha<1$ and general $g(y,\tau)$, $h(\tilde{y},\tau)$ with $\|g\|_{2+\alpha,\nu}<+\infty$, $\|h\|_{1+\alpha,\nu}<+\infty$,
\begin{equation}\label{inner13:2}
\begin{cases}\phi_{\tau}=\Delta \phi+g(y, \tau) & \text { in }~  B_{2 R}^{+} \times\left(\tau_{0}, \infty\right), \\
\begin{aligned}
-\frac{d \phi}{d y_4}(\tilde{y}, 0, \tau) =&~2 U(\tilde{y}, 0)(1-\eta_{l}) \phi(\tilde{y}, 0, \tau)
\\&~+h(\tilde{y}, 0, \tau)
\end{aligned}  & \text { in }~ (\partial B_{2 R}^{+}  \cap\mathbb{R}^3) \times\left(\tau_{0}, \infty\right), \\
\phi\left(y, \tau_{0}\right)=0
& \text { in }~ B_{2 R(0)}^{+},\\
\phi(y,\tau) =0 &~\text{in}~  \partial B_{2 R}^{+}\backslash (\partial B_{2 R}^{+}  \cap\mathbb{R}^3)\times(\tau_{0}, \infty).
\end{cases}
\end{equation}
By standard parabolic theory \cite{Quittner-Souplet2019}, there exists a unique solution $\phi_{*}[g,h]$ to this problem, which defines a linear operator of $g(y,\tau)$ and $h(\tilde y,0,\tau)$. Moreover, $\phi_{*}[g,h]$ is radially symmetric in $\tilde{y}$ and satisfies
\begin{equation}\label{inner14:2}
|\phi_{*}[g,h]|\lesssim \Theta_{R}^{\alpha} \tau^{-\tilde{\nu}}\left(\|g\|_{2+\alpha,\nu}+\|h\|_{1+\alpha,\nu}\right),
\end{equation}
where $\Theta_{R}^{\alpha}$ is given in \eqref{Oct3-1}.

To verify this estimate, let $p_{1}(y)$ and $p_{2}(y)$
be radial positive solutions of
\begin{equation}\label{inner15:4}
\begin{cases}
\Delta p_{1}=-\frac{1}{1+|y|^{2+\alpha}} & \text{in}~  B_{2 R}^{+},\\
-\frac{d p_{1}}{d y_4}(\tilde y,0)=0 & \text{in}~  \partial B_{2 R}^{+}  \cap\mathbb{R}^3,\\
p_1(y) =0 &~\text{in}~  \partial B_{2 R}^{+}\backslash (\partial B_{2 R}^{+}  \cap\mathbb{R}^3)
\end{cases}
\end{equation}
and
\begin{equation}\label{inner16:4}
\begin{cases}\Delta p_{2}=0 & \text{in}~  B_{2 R}^{+},\\
-\frac{d p_{2}}{d y_4}(\tilde y,0)=\frac{1}{1+|\tilde{y}|^{1+\alpha}}
& \text{in}~  \partial B_{2 R}^{+}  \cap\mathbb{R}^3,\\
p_2(y) =0 &~\text{in}~  \partial B_{2 R}^{+}\backslash (\partial B_{2 R}^{+}  \cap\mathbb{R}^3),
\end{cases}
\end{equation}
respectively. For problem \eqref{inner15:4}, the fundamental solution satisfies
$$K_{1}(y)\asymp \frac{1}{|y|^{2}},$$
so the solution can be bounded as
\begin{align*}
p_{1}(y)&\lesssim\left| \int_{B_{2 R}^{+}} \frac{1}{|y-z|^{2}}\frac{1}{1+|z|^{2+\alpha}}dz\right|
\\&\lesssim \left|\left(\int_{|z|\leq2|y|}+\int_{2|y|\leq|z|\leq2R}\right)\frac{1}{|y-z|^{2}}\frac{1}{1+|z|^{2+\alpha}}dz\right|
\\&:=I_{1}+I_{2}.
\end{align*}
For $I_{1}$, since $\frac{1}{|y-z|^{2}}$ and $\frac{1}{1+|z|^{2+\alpha}}$ decrease in $z$, the Hardy-Littlewood rearrangement inequality gives
\begin{align*}
I_{1}\lesssim \left|\int_{|z|\leq2|y|} \frac{1}{|z|^{2}}\frac{1}{1+|z|^{2+\alpha}}dz\right|\lesssim
\int_{0}^{2|y|}\frac{r}{1+r^{2+\alpha}}dr\lesssim
\begin{cases}
\log R & \text{if}~ \alpha=0,\\
|y|^{-\alpha}& \text{if}~ \alpha<0.
\end{cases}
\end{align*}
For $I_{2}$, we have
\begin{align*}
I_{2}\lesssim\left|\int_{2|y|\leq|z|\leq2R}\frac{1}{|z|^{2}}\frac{1}{1+|z|^{2+\alpha}}dz\right|\lesssim \int_{2|y|}^{2R}\frac{r}{1+r^{2+\alpha}}dr
\lesssim
\begin{cases}
\log R & \text{if}~ \alpha=0,\\
|R|^{-\alpha}-|y|^{-\alpha} & \text{if}~ \alpha<0.
\end{cases}
\end{align*}
For the case $\alpha \in (0, 2)$, it is readily verifiable that $\frac{1}{\left(|\tilde y|^2+(1+y_4)^2\right)^{\alpha /2}}$ is a supersolution to \eqref{inner15:4}. Therefore, we conclude that
$$p_{1}(y)\lesssim \Theta_{R}^{\alpha},$$
where $\Theta_{R}^{\alpha}$ is as defined in \eqref{Oct3-1}.
Similarly, we have $p_{2}(y)\lesssim \Theta_{R}^{\alpha}$.

Noting that $|U(\tilde{y}, 0)|\lesssim \frac{1}{1+|\tilde{y}|^{2}}$, for sufficiently large $l$,  there exists a constant $0<\gamma<\frac{1}{2}$ such that
$$|2U(\tilde{y}, 0)(1-\eta_{l})(p_{1}(\tilde{y},0)+p_{2}(\tilde{y},0))|< \frac{\gamma}{1+|\tilde{y}|^{1+\alpha}}.$$
Let
$$\Phi_{1}(y,\tau)=c_{1}\tau^{-\tilde{\nu}}(p_{1}(y)+p_{2}(y)),$$
where $c_{1}>\|g\|_{2+\alpha,\nu}+\|h\|_{1+\alpha,\nu}$, $\tau>\tau_0$. Then for fixed sufficiently larger $\tau_0$, $\Phi_{1}(y,\tau)$ is a positive supersolution of
\begin{align*}
\begin{cases}
\partial_{\tau}\phi=\Delta\phi+g(y, \tau) & \text { in }~  B_{2 R}^{+} \times\left(\tau_{0}, \infty\right), \\
-\frac{d \phi}{d y_4}(\tilde{y}, 0, \tau)=2U(\tilde{y}, 0)(1-\eta_{l}) \phi(\tilde{y}, 0, \tau)+h(\tilde{y}, 0, \tau) &
 \text { in }~ (\partial B_{2 R}^{+}  \cap\mathbb{R}^3) \times\left(\tau_{0}, \infty\right).
 \end{cases}
 \end{align*}
By the parabolic comparison principle \cite{Evans1990}, we further obtain
\begin{align*}
|\phi(y,\tau)|\lesssim
\begin{cases}
\tau^{-\tilde{\nu}}\frac{1}{1+|y|^{\alpha}}\left(\|g\|_{2+\alpha,\nu}+\|h\|_{1+\alpha,\nu}\right), &  \alpha>0,\\
\tau^{-\tilde{\nu}}\log R\left(\|g\|_{2+\alpha,\nu}+\|h\|_{1+\alpha,\nu}\right), &  \alpha=0,\\
\tau^{-\tilde{\nu}}R^{-\alpha}\left(\|g\|_{2+\alpha,\nu}+\|h\|_{1+\alpha,\nu}\right), & \alpha<0 .
\end{cases}
\end{align*}
Therefore, the solution $\phi_{*}[g,h]$ to problem \eqref{inner13:2} satisfies estimate \eqref{inner14:2}.

\medskip

\begin{proof}[Proof of Lemma \ref{lemma4.1}]
Consider the functions
$$\overline{g}=g^{0}-c_{0}(\tau)Z_{0}(y),\quad \overline{h}=h^{0},\quad
c_{0}(\tau)=\frac{\int_{B_{2R}^{+}} g^{0}(y, \tau) Z_{0}(y) dy}{\int_{B_{2R}^{+}}Z_{0}^{2} dy}.$$
Let $\phi_{*}[\overline{g},\overline{h}]$ be the solution to \eqref{inner13:2}. Since $g^{0}, h^{0}$ are radially symmetric in $\tilde{y}$, so is $\phi_{*}[\overline{g},\overline{h}]$.
Set
$$\phi=\phi_{*}[\overline{g},\overline{h}]+\tilde{\phi},\quad c(\tau)=c_{0}(\tau)+\tilde{c}(\tau).$$
Then \eqref{inner8:2} reduces to
\begin{equation}\label{inner15:2}
\begin{cases}
\tilde{\phi}_{\tau}=\Delta \tilde{\phi}-\tilde{c}(\tau) Z_{0}(y) & \text { in }~  B_{2 R}^{+} \times\left(\tau_{0}, \infty\right), \\
\begin{aligned}
-\frac{d \tilde{\phi}}{d y_4}(\tilde{y}, 0, \tau)=&~2U(\tilde{y}, 0) \tilde{\phi}(\tilde{y}, 0, \tau)+\tilde{h}(\tilde{y}, \tau)
\end{aligned}& \text { in }~ (\partial B_{2R}^{+}\cap\mathbb{R}^3)\times\left(\tau_{0},\infty\right),\\
\tilde{\phi}\left(y, \tau_{0}\right)=0 & \text { in }~  B_{2 R(0)}^{+},\\
\tilde{\phi}(\tilde{y},\tau)=0 &~ \text{in}~ \partial B_{2 R}^{+}\backslash (\partial B_{2 R}^{+}  \cap\mathbb{R}^3)\times\left(\tau_{0},\infty\right), \end{cases}
\end{equation}
where $\tilde{h}(\tilde{y}, \tau)=2U(\tilde{y}, 0)\eta_{l}\phi_{*}[\overline{g},\overline{h}].$
Note that $\tilde{h}(\tilde{y}, \tau)$ is radially symmetric in $\tilde{y}$ and compactly supported, with its support size controlled by $\overline{g}$ and $\overline{h}$. For any $m>0$, this yields the estimate
\begin{equation}\label{inner16:2}
\|\tilde{h}\|_{m,\nu}\lesssim \|\phi_{*}[\overline{g},\overline{h}]\|_{0,\nu}.
\end{equation}

We next solve problem \eqref{inner15:2} under the additional orthogonality condition
\begin{equation}\label{inner17:2}
\int_{B_{2 R}^{+}} \tilde{\phi}(y, \tau) Z_{0}(y) d y+\lambda_{0}\int_{\tau_{0}}^{\tau}\int_{B_{2 R}^{+}}\tilde{\phi}(\tilde{y},0, s)\widetilde {Z}_{0}(\tilde y) d \tilde{y}ds=0,
\quad  \tau\in(\tau_{0},+\infty).
\end{equation}
Problem \eqref{inner15:2} together with \eqref{inner17:2} is equivalent to solving \eqref{inner15:2}
with $\tilde{c}(\tau) := \tilde{c}[\tilde{\phi}, \tilde{h}](\tau)$ explicitly given by
\begin{equation}\label{inner18:2}
\tilde{c}(\tau)=\frac{\int_{\partial{B}_{2R}^{+}\cap{\mathbb{R}}^3}\tilde{h}(\tilde{y},\tau)\widetilde{Z}_{0}(\tilde y)d\tilde{y}}{\int_{B_{2R}^{+}} Z_{0}^{2} d y}.
\end{equation}
From (\ref{inner17:2}) and \eqref{inner18:2}, we have
$$
\int_{\partial{B}_{2R}^{+}\cap{\mathbb{R}}^3}\tilde{\phi}(\tilde{y},0, \tau)\widetilde {Z}_{0}(\tilde y) d \tilde{y}=0.
$$

Testing \eqref{inner15:2} against $\tilde{\phi}$ and integrating gives
$$\frac{1}{2}\partial_{\tau}\int_{B_{2R}^{+}}\tilde{\phi}^{2}dy+ B(\tilde{\phi},\tilde{\phi})=\int_{\partial{B}_{2R}^{+}\cap{\mathbb{R}}^3} \tilde {h}(\tilde{y},\tau)\tilde{\phi}(\tilde{y},0, \tau)d\tilde{y}-\tilde c(\tau)\int_{B_{2R}^{+}}Z_0(y)\tilde\phi(y,\tau)dy,$$
By Lemma \ref{lemma4.2}, there exists $\gamma > 0$ such that
\begin{equation}\label{inner19:2}
\frac{1}{2}\partial_{\tau}\int_{B_{2R}^{+}}|\tilde{\phi}|^{2}dy+ \frac{\gamma}{R^{2}}\int_{B_{2R}^{+}}|\tilde{\phi}|^{2}dy\lesssim \int_{\partial{B}_{2R}^{+}\cap{\mathbb{R}}^3} \tilde {h}\tilde{\phi}d\tilde{y}-\tilde c(\tau)\int_{B_{2R}^{+}}Z_0\tilde\phi dy.
\end{equation}
From \eqref{inner16:2} with $m=0$ and \eqref{inner18:2} and , we get
$$|\tilde{c}(\tau)|\leq \tau^{-\tilde{\nu}}\|\phi_{*}[\overline{g},\overline{h}]\|_{0,\nu}.$$
For sufficiently large $m$, \eqref{inner16:2} further gives
$$\int_{\partial{B}_{2R}^{+}\cap{\mathbb{R}}^3} \tilde h^{2}d\tilde{y}\lesssim \tau^{-2\tilde\nu}\|\phi_{*}[\overline{g},\overline{h}]\|_{0,\nu}^{2}.$$
Using the initial condition $\tilde{\phi}(\cdot,\tau_{0})=0$ and Gronwall’s inequality, we deduce from \eqref{inner19:2} that for all sufficiently small $\epsilon > 0$,
the following $L^{2}$-estimate holds
\begin{align*}
\|\tilde{\phi}(\cdot,\tau)\|_{L^{2}(B_{2R}^{+})}\lesssim \tau^{-\tilde{\nu}}R^{2}\left(\frac{1}{\epsilon}\left\|\phi_{*}[\overline{g},\overline{h}]\right\|_{0,\nu}+\epsilon M^{2}R^{-2}\left\|\tau^{\tilde\nu}\tilde{\phi}(\cdot,\tau)\right\|_{L^{\infty}(B_{M}^+)}\right),\quad \forall \tau>\tau_{0}.
\end{align*}
Applying standard parabolic estimates to \eqref{inner15:2} and taking $\epsilon = M^{-3}$,
we obtain that for any sufficiently large fixed $M > 0$,
\begin{align*}
\|\tilde{\phi}(\cdot,\tau)\|_{L^{\infty}(B_{M})}\lesssim \tau^{-\tilde{\nu}}R^{2}\|\phi_{*}[\overline{g},\overline{h}]\|_{0,\nu},\quad  \forall \tau>\tau_{0}.
\end{align*}
Outside $B_M^+$, the solution of \eqref{inner15:2} is bounded above by a barrier of order $\tau^{-\tilde{\nu}} R^2 |y|^{-2}$. Combining this with local parabolic gradient estimates \cite{Quittner-Souplet2019}, we conclude that
\begin{equation}\label{inner34:0}
(1+|y|)|\nabla\tilde{\phi}(y,\tau)|+|\tilde{\phi}(y,\tau)|\lesssim \tau^{-\tilde{\nu}} \frac{R^{2}}{|y|^{2}}\|\phi_{*}[\overline{g},\overline{h}]\|_{0,\nu}.
\end{equation}
By \eqref{inner16:2}, \eqref{inner34:0}, and the estimate for $|\phi_{*}[\overline{g},\overline{h}]|$, the function
\begin{align*}
\phi=\phi^{0}[g^{0},h^{0}]=\phi_{*}[\overline{g},\overline{h}]+\tilde{\phi}
\end{align*}
solves problem \eqref{inner8:2} and satisfies the desired estimate \eqref{inner9:2}, i.e.,
$$(1+|y|)|\nabla \phi(y,\tau)|+|\phi(y,\tau)|\lesssim \tau^{-\tilde{\nu}} \frac{R^{2}}{1+|y|^{2}}\Theta_{R}^{\alpha}(\|g^{0}\|_{2+\alpha, \nu}+\|h^{0}\|_{1+\alpha,\nu}).$$

Finally, from \eqref{inner18:2}, one has
\begin{align*}
c(\tau)=\frac{\int_{B_{2R}^{+}} g^{0}(y, \tau) Z_{0}(y) dy+\int_{\partial{B}_{2R}^{+}\cap{\mathbb{R}}^3}\tilde{h}(\tilde{y}, 0, \tau) \widetilde Z_{0}(\tilde{y}) d \tilde{y}}{\int_{B_{2R}^{+}}Z_{0}^{2} dy},
\end{align*}
Applying \eqref{inner14:2} and \eqref{inner16:2}, we obtain
\begin{align*}
|\tilde{c}(\tau)|
&\lesssim\tau^{-\tilde{\nu}}(\|\bar{g}\|_{2+\alpha,\nu}+\|\bar{h}\|_{1+\alpha,\nu})
\\&\lesssim\tau^{-\tilde{\nu}}\left(\|h^{0}\|_{1+\alpha,\nu}
+\left\|g^{0}-Z_{0}\int_{B_{2R}^{+}} g^{0} Z_{0} dy\right\|_{2+\alpha,\nu}\right).
\end{align*}
Thus, estimates (\ref{inner10:2}) and \eqref{inner9:2} hold.
\end{proof}
\begin{lemma}\label{lemma4.3}
Assume $0<\sigma_1<1$, $\nu>0$, $\tilde\nu>0$ with $\mu_0^\nu(t)\sim\tau^{-\tilde{\nu}}$. Assume further that $\|g^{0}\|_{2+\sigma_1,\nu}<+\infty$, $\|h^{0}\|_{1+\sigma_1,\nu}<+\infty$, and the orthogonality condition
\begin{align*}
\int_{B_{2R}^{+}} g^{0}(y, \tau)Z_4(y) d y+\int_{\partial B_{2R}^{+}\cap{\mathbb{R}}^3} h^{0}(\tilde{y},0, \tau) Z_4(\tilde{y},0) d \tilde{y}=0,\quad \forall \tau\in(\tau_{0}, \infty).
\end{align*}
Then there exist solutions $\phi=\phi^{0}[g^{0},h^{0}]$ and $c=c[g^{0},h^{0}]$ to problem \eqref{inner4.4}, which induce linear operators of $g^{0}$ and $h^{0}$. The function $\phi=\phi^{0}$ satisfies
\begin{equation}\label{inner25:2}
|\phi(y,\tau)|\lesssim \tau^{-\tilde{\nu}} \left(\frac{R(t)^{4-\sigma_1}}{1+|y|^4}\|g^{0}\|_{2+\sigma_1,\nu}+\frac{R(t)^{3-\sigma_1}}{1+|y|^3}\|h^{0}\|_{1+\sigma_1,\nu}\right)
\end{equation}
in $B_{\frac{3}{2}R}^{+}\times(\tau_{0},+\infty)$,
and $c(\tau)$ satisfies estimate (\ref{inner7:4}).
\end{lemma}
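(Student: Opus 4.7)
The plan is to derive Lemma \ref{lemma4.3} from Lemma \ref{lemma4.1} by exploiting the orthogonality against $Z_4$ to upgrade the decay at infinity from $(1+|y|)^{-2-\sigma_1}$ to $(1+|y|)^{-4}$, at the cost of a larger prefactor involving $R(t)^{4-\sigma_1}$. First, apply Lemma \ref{lemma4.1} with $\alpha=\sigma_1\in(0,1)$ to construct a radial-in-$\tilde{y}$ solution $\phi=\phi^0[g^0,h^0]$ of \eqref{inner4.4} together with a Lagrange multiplier $c(\tau)$, satisfying the baseline estimate
$$|\phi(y,\tau)|\lesssim \tau^{-\tilde{\nu}}\frac{R(t)^{2}}{1+|y|^{2+\sigma_1}}\bigl(\|g^0\|_{2+\sigma_1,\nu}+\|h^0\|_{1+\sigma_1,\nu}\bigr).$$
The bound \eqref{inner7:4} on $c(\tau)$ follows directly from \eqref{inner10:2}, modulo the identification of the boundary integral $\int h^0\widetilde{Z}_0\,d\tilde{y}$, which is already controlled by $\|h^0\|_{1+\sigma_1,\nu}$.

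To establish the sharper decay \eqref{inner25:2}, I would argue by contradiction and blow-up. Suppose there exist parameters $R_m\to\infty$ and data $(g_m^0,h_m^0)$ with $\|g_m^0\|_{2+\sigma_1,\nu}+\|h_m^0\|_{1+\sigma_1,\nu}=1$ satisfying the orthogonality against $Z_4$, such that the corresponding solutions $\phi_m$ from Step~1 satisfy
$$A_m:=\sup_{(y,\tau)\in B^+_{(3/2)R_m}\times(\tau_0,\infty)}\frac{\tau^{\tilde{\nu}}|\phi_m(y,\tau)|}{R_m^{4-\sigma_1}(1+|y|^4)^{-1}+R_m^{3-\sigma_1}(1+|y|^3)^{-1}}\to\infty.$$
Let $(y_m,\tau_m)$ be near-maximizers and set $\tilde\phi_m:=\phi_m/A_m$. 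Then $\tilde\phi_m$ solves the same linear equation with right-hand sides $(g_m^0/A_m, h_m^0/A_m)$ tending to zero in the specified weighted norms, while $\tilde\phi_m(y_m,\tau_m)$ remains of unit order relative to the claimed barrier.

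Next, extract a limit profile $\tilde\phi_\infty$ depending on the location of $y_m$. If $|y_m|$ remains bounded, parabolic compactness produces (along a subsequence) $\tilde\phi_m\to\tilde\phi_\infty$ locally uniformly, where $\tilde\phi_\infty$ is a nontrivial bounded radial-in-$\tilde{y}$ solution of the homogeneous linearized problem on $\mathbb{R}^4_+$ with the boundary condition $-\partial_{y_4}\tilde\phi_\infty=2U(\tilde{y},0)\tilde\phi_\infty$, decaying like $(1+|y|)^{-2-\sigma_1}$ (inherited from Step 1) and satisfying the limiting orthogonality $\int_{\mathbb{R}^4_+}\tilde\phi_\infty Z_4\,dy=0$. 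Non-degeneracy of $L_U$ in the radial class (bounded radial kernel spanned by $Z_4$), combined with this orthogonality, forces $\tilde\phi_\infty\equiv 0$, a contradiction. If instead $|y_m|\sim R_m\to\infty$, a different rescaling $y=R_m z$ reduces the problem to a half-space with negligible potential $2U\to 0$, and uniqueness for the resulting constant-coefficient Neumann Laplace problem combined with the inherited decay again yields triviality.

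The main obstacle will be ensuring the orthogonality against $Z_4$ passes to the limit: since $Z_4(y)\sim|y|^{-2}$ at infinity, integrability of the tail of $\tilde\phi_m Z_4$ relies crucially on the $(1+|y|)^{-2-\sigma_1}$ decay from Step 1 being uniform in $m$, so that the truncation tails vanish. A secondary technicality is the proper rescaling of the slow time $\tau$ in the blow-up: one must choose a time scale yielding a genuinely parabolic (rather than degenerate elliptic) limit problem and verify that the normalizing factor $\tau^{-\tilde{\nu}}$ transforms coherently under the rescaling. Once these are handled, the rigidity step closes the contradiction and establishes \eqref{inner25:2}.
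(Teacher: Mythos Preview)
Your Step~1 already proves the lemma; the blow-up argument in Step~2 is unnecessary, and in fact rests on a misreading of what \eqref{inner25:2} says. On the domain $B^{+}_{(3/2)R}$ one has $|y|\lesssim R$, and a direct comparison shows
\[
\frac{R^{2}}{1+|y|^{2+\sigma_1}}\Big/\frac{R^{4-\sigma_1}}{1+|y|^{4}}
\;\asymp\;\frac{(1+|y|)^{2-\sigma_1}}{R^{2-\sigma_1}}\;\lesssim\;1,
\]
so the target barrier $R^{4-\sigma_1}(1+|y|^{4})^{-1}$ is \emph{weaker} than your baseline from Lemma~\ref{lemma4.1}, not an upgrade. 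By linearity of the solution operator, applying this separately to $(g^{0},0)$ and $(0,h^{0})$ yields \eqref{inner25:2} immediately. In your contradiction setup this means $A_m\leq C$ from the start, so the limiting analysis is never reached. Likewise, \eqref{inner7:4} follows from \eqref{inner10:2}: the right-hand side of \eqref{inner7:4} carries extra factors $R^{1-\sigma_1},R^{2-\sigma_1}\geq 1$ and hence is weaker, and the missing boundary term $\int h^{0}\widetilde{Z}_{0}$ is $O(\mu_0^{\nu}\|h^{0}\|_{1+\sigma_1,\nu})$, well within the allowed error. Note in particular that your route never uses the orthogonality against $Z_4$.

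The paper proceeds quite differently. It first inverts the elliptic operator, setting $(G_0,H_0)=L_U^{-1}[g^0,h^0]$ (here the orthogonality against $Z_4$ is genuinely needed, since $Z_4$ spans the radial kernel of $L_U$), then applies Lemma~\ref{lemma4.1} to the smoother data $(G_0,H_0)$ on $B_{4R}^+$ using the negative-$\alpha$ branch of $\Theta_R^{\alpha}$, upgrades to second-derivative bounds by local parabolic regularity, and finally defines $\phi^{0}:=L_U[\Phi]$. The $(1+|y|)^{-4}$ decay thus arises by differentiation rather than by comparison, and one recovers $c(\tau)=\lambda_0 c_0(\tau)$ together with \eqref{inner7:4} from the identity $\int G_0 Z_0=\lambda_0^{-1}(\int g^0 Z_0+\int h^0\widetilde{Z}_0)$. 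Your direct route is shorter for the bounds as stated; the paper's construction supplies a specific $\phi^0=L_U[\Phi]$ whose structure ties in with the elliptic inverse and may be the intended object in the subsequent gluing.
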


\begin{proof}
Extend $g^{0}$, $h^{0}$ to be zero outside $B_{2R}^{+}$ and $\partial B_{2R}^{+}\cap{\mathbb{R}}^3$, respectively, and still denote as $g^{0}$, $h^{0}$.
By standard elliptic estimates \cite{Evans1990}, the solution $(G_0, H_0) := L_U^{-1}[g^0, h^0]$ to
\begin{align*}
\begin{cases}
\Delta \phi=-g^{0}(y,\tau) & \text { in }~ \mathbb{R}^4_{+} ,\\
-\frac{d\phi}{d y_4}(\tilde{y}, 0, \tau)=2U(\tilde{y}, 0) \phi(\tilde{y}, 0, \tau)+h^{0}(\tilde{y}, \tau) &  \text { in}~  \mathbb{R}^3,\\
\lim_{|y|\to +\infty}\phi(y)=0
 \end{cases}
 \end{align*}
satisfies
\begin{align*}
|G_{0}(y,\tau)|\lesssim \tau^{-\tilde{\nu}} \frac{1}{1+|y|^{\sigma_1}}\left(\|g^{0}\|_{2+\sigma_1,\nu}+\|h^{0}\|_{1+\sigma_1,\nu}\right),
\end{align*}

\begin{equation}\label{inner28:6}
|H_{0}(\tilde{y},\tau)|\lesssim \tau^{-\tilde{\nu}} \frac{1}{1+|\tilde{y}|^{\sigma_1}}\left(\|g^{0}\|_{2+\sigma_1,\nu}+\|h^{0}\|_{1+\sigma_1,\nu}\right).\end{equation}

Consider the initial-boundary value problem
\begin{align*}
\begin{cases}
\Phi_{\tau}=\Delta \Phi+G_{0}(y, \tau)-c_{0}(\tau) Z_{0}(y) & \text { in }~ B_{4 R}^{+} \times\left(\tau_{0}, \infty\right),\\
\begin{aligned}
-\frac{d \Phi}{d y_4}(\tilde{y}, 0, \tau)=&~2U(\tilde{y}, 0) \Phi(\tilde{y}, 0, \tau)
+H_{0}(\tilde{y}, \tau)
\end{aligned}  & \text { in }~  (\partial B_{4R}^{+}\cap\mathbb{R}^3)\times\left(\tau_{0},\infty\right),
\\ \Phi\left(\cdot, \tau_{0}\right)=0 & \text { in }~ B_{4R(0)}^{+},\\
\Phi(\tilde y,\tau)=0 &~ \text{in}~ \partial B_{4R}^{+}\backslash (\partial B_{4R}^{+}\cap\mathbb{R}^3)\times\left(\tau_{0},\infty\right).
\end{cases}
\end{align*}
By Lemma \ref{lemma4.1} and estimate \eqref{inner14:2}, there exist solutions $\Phi_{0}[G_{0},H_{0}]$ and $c_{0}=c_{0}[G_{0},H_{0}]$ to this problem, inducing linear operators on $G_{0}$ and $H_{0}$, and satisfying
\begin{align*}
|\Phi_{0}(y,\tau)|\lesssim \tau^{-\tilde{\nu}} \frac{R(t)^{2}}{1+|y|^{2}}\left(R(t)^{2-\sigma_1}\|G_{0}\|_{\sigma_1, \nu}+R(t)^{1-\sigma_1}\|H_{0}\|_{\sigma_1,\nu}\right)
\end{align*}
and
\begin{equation}\label{inner27:4}
\begin{aligned}
&\left|c_{0}(\tau)-\frac{\int_{B_{2R}^{+}}G_{0}(\tilde{y},\tau)Z_{0}(y) dy}{\int_{B_{2R}^{+}}Z_{0}^{2}dy}\right|
 \\&\lesssim \tau^{-\tilde{\nu}}\left( R(t)^{1-\sigma_1}\left\|H_{0}\right\|_{\sigma_1,\nu}+ R(t)^{2-\sigma_1}\left\|G_{0}-Z_{0}\int_{B_{2R}^{+}}G_{0}(\tilde{y},\tau)Z_{0}(y)dy\right\|_{\sigma_1,\nu}\right).
 \end{aligned}
\end{equation}
Recall that
\begin{align*}
L_{U}[Z_{0}]=
\begin{cases}
\Delta Z_{0}=\lambda_{0}Z_{0}(y) & \text{ in }~ \mathbb{R}_{+}^4 ,\\
-\frac{dZ_{0}}{d y_4}(\tilde{y},0) -2U(\tilde{y}, 0)Z_{0}(\tilde{y},0)=0 & \text { in }~ \mathbb{R}^3.
\end{cases}
\end{align*}
Integrating by parts, we get
\begin{align*}
\lambda_{0}\int_{B_{2R}^{+}} G_{0}(y,\tau)Z_{0}(y)dy
&= \int_{B_{2R}^{+}} H_{0}(\tilde{y},\tau) L_{U}[Z_{0}]|_{B_{2R}^{+}}(\tilde{y}) \, d \tilde{y} \\
&=\int_{B_{2R}^{+}} g^{0}(y,\tau) Z_{0}(y) \, dy + \int_{\partial{B}_{2R}^{+}\cap{\mathbb{R}}^3} h^{0}(\tilde{y},0,\tau) \widetilde{Z}_{0}(\tilde{y}) \, d \tilde{y},
\end{align*}
hence
\begin{equation}\label{inner32:5}
\int_{B_{2R}^{+}} G_{0}(y,\tau)Z_{0}(y)dy = \lambda_{0}^{-1} \left[ \int_{B_{2R}^{+}} g^{0}(y,\tau) Z_{0}(y) \, dy + \int_{\partial{B}_{2R}^{+}\cap{\mathbb{R}}^3} h^{0}(\tilde{y},0,\tau) \widetilde{Z}_{0}(\tilde{y}) \, d \tilde{y} \right].
\end{equation}

From the definition of $L_{U}^{-1}$, we have
\[
\widetilde{Z}_{0}(\tilde{y}) = \lambda_{0} L_{U}^{-1}|_{B_{2R}^{+}}[\widetilde{Z}_{0}](\tilde{y}), \quad Z_{0}(y) = L_{U}^{-1}|_{\partial{B}_{2R}^{+}\cap{\mathbb{R}}^3}[0](y).
\]
Thus,
\[
\|H_{0} \|_{\sigma_1,\nu} = \left\| L_{U}^{-1}|_{\partial{B}_{2R}^{+}\cap{\mathbb{R}}^3}[g^{0}] \right\|_{\sigma_1,\nu} \lesssim\|g^{0}\|_{2+\sigma_1,\nu}+\|h^{0}\|_{1+\sigma_1,\nu},
\]
and
\begin{align*}
&\left\| G_{0}-Z_{0}\int_{B_{2R}^{+}}G_{0}(\tilde{y},\tau)Z_{0}(y)dy\right\|_{\sigma_1,\nu}
\\&= \left\| L_{U}^{-1}|_{B_{2R}^{+}} \left[ g^{0} - \lambda_{0}Z_{0} \int_{B_{2R}^{+}} G_{0}(y,\tau)Z_{0}(y)dy\right] \right\|_{\sigma_1,\nu} \\
&\lesssim \left\| g^{0} - Z_{0} \left[ \int_{B_{2R}^{+}} g^{0}(y,\tau) Z_{0}(y) \, dy + \int_{\partial{B}_{2R}^{+}\cap{\mathbb{R}}^{3}} h^{0}(\tilde{y},0,\tau) \widetilde{Z}_{0}(\tilde{y}) \, d\tilde{y} \right] \right\|_{2+\sigma_1,\nu},
\end{align*}
where the last inequality uses \eqref{inner32:5} and \eqref{inner28:6}.

Now fix a unit vector $e=(\tilde{e},e_4)$, a large number $\rho>0$ with $\rho<2R$ and $\tau_{1}\geq \tau_{0}$. Define the following scaled functions
\begin{align*}
\Phi_{\rho}(z,t)&:=\Phi(\rho e+\rho z,\tau_{1}+\rho^{2}t),
\qquad\qquad G_{\rho}(z,t):=\rho^{2}[G_{0}(\rho e+\rho z,\tau_{1}+\rho^{2}t),
\\H_{\rho}(\tilde{z},t)&:=\rho[H_{0}(\rho \tilde{e}+\rho \tilde{z},\tau_{1}+\rho^{2}t)-c_{0}(\tau_{1}+\rho^{2}t)Z_{0}(\rho \tilde{e}+\rho \tilde{z},0)],
\end{align*}
where $z=(\tilde{z},z_4)$, $\tilde{z}\in \mathbb{R}^3$, $z_4\geq 0$.
Then $\Phi_{\rho}(z,t)$ satisfies
\begin{align*}
\begin{cases}
\partial_{t}\Phi_{\rho}=\Delta_{z} \Phi_{\rho}+G_{\rho}(z, t) &  \text{in}~  B_{1}^{+}(0) \times(0,2),\\
-\frac{d \Phi_{\rho}}{d z_4}(\tilde{z}, 0, t)=A_{\rho}(\tilde{z},t)
\Phi_{\rho}(\tilde{z}, 0, t)+H_{\rho}(\tilde{z},t) &
\text{in}~  (\partial B_{1}^{+}(0)\cap\mathbb{R}^3) \times(0,2),
\end{cases}
\end{align*}
with $A_{\rho}(\tilde{z},t)=O(\rho^{-1})$ uniformly in $B_{1}^{+}(0) \times(0,+\infty)$.
By parabolic estimates \cite{Quittner-Souplet2019}, we have
\begin{align*}
\|\nabla_{z}\Phi_{\rho}\|_{L^{\infty}(B_{\frac{1}{2}}^{+}(0)\times(1,2))}
\lesssim&~ \|\Phi_{\rho}\|_{L^{\infty}(B_{1}^{+}(0)\times(0,2))}+\|G_{\rho}\|_{L^{\infty}(B_{1}^{+}(0)\times(0,2))}
\\&~+\|H_{\rho}\|_{L^{\infty}((\partial B_{1}^{+}(0)\cap \mathbb{R}^3)\times(0,2))}.
\end{align*}
Moreover,
\begin{align*}
&\|G_{\rho}\|_{L^{\infty}(B_{1}^{+}(0)\times(0,2))}\lesssim \rho^{2-\sigma_1}\tau_{1}^{-\tilde\nu}\|G_{0}\|_{\sigma_1,\nu},
\\& \|H_{\rho}\|_{L^{\infty}((\partial B_{1}^{+}(0)\cap \mathbb{R}^3)\times(0,2))}\lesssim \rho^{1-\sigma_1}\tau_{1}^{-\tilde\nu}\|H_{0}\|_{\sigma_1,\nu},
\\& \|\Phi_{\rho}\|_{L^{\infty}(B_{1}^{+}(0)\times(0,2))}\lesssim \tau_{1}^{-\tilde\nu}K(\rho),
\end{align*}
where
\begin{equation}\label{inner29:2}
K(\rho)=\frac{R(t)^{2}}{\rho^{2}}\left(R(t)^{2-\sigma_1}\|g^{0}\|_{2+\sigma_1, \nu}+R(t)^{1-\sigma_1}\|h^{0}\|_{1+\sigma_1,\nu}\right).
\end{equation}
In particular,
$$\rho|\nabla_{y}\Phi(\rho e, \tau_{1}+\rho^{2})|=|\nabla\Phi_{\rho}(0,1)|\lesssim \tau_{1}^{-\tilde\nu}K(\rho).$$
Choosing $\tau_{0}\geq R^{2}$, for any $\tau>2\tau_{0}$ and $|y|\leq 4R$, one has
$$
(1+|y|)|\nabla\Phi(y,\tau)|\lesssim \tau^{-\tilde{\nu}}K(|y|).
$$
Similar estimates hold for $\tau\leq2\tau_{0}$ by parabolic estimates up to the initial time (with zero initial data).
Since $G_{0}$ is $C^{1}$-smooth in $(\tilde{y},y_4)$ and $H_{0}$ is $C^{1}$-smooth in $\tilde{y}$ with
$$
\|\nabla_{y} G_{0}\|_{1+\sigma_1,\nu}\lesssim\|g^{0}\|_{2+\sigma_1,\nu}+\|h^{0}\|_{1+\sigma_1,\nu}, \quad \|\nabla_{\tilde{y}} H_{0}\|_{1+\sigma_1,\nu}\lesssim\|g^{0}\|_{2+\sigma_1,\nu}+\|h^{0}\|_{1+\sigma_1,\nu},
$$
it follows that for all $\tau>\tau_{0}$ and $|y|<2R$,
$$(1+|y|^{2})|D_{y}^{2}\Phi(y,\tau)|\lesssim \tau^{-\tilde{\nu}}K(|y|),$$
where $K$ is defined by \eqref{inner29:2}. Thus, in $B_{2 R}^{+}$
\begin{align*}
&(1+|y|^{2})|D_{y}^{2}\Phi(y,\tau)|+(1+|y|)|\nabla\Phi(y,\tau)|+|\Phi(y,\tau)|
\\&\lesssim\tau^{-\tilde{\nu}} \frac{R(t)^{2}}{1+|y|^{2}}\left(R(t)^{2-\sigma_1}\|g\|_{2+\sigma_1, \nu}+R(t)^{1-\sigma_1}\|h\|_{1+\sigma_1,\nu}\right).
\end{align*}
This yields
$$|L_{U}[\Phi](\cdot,\tau)|\lesssim \tau^{-\tilde{\nu}}\left(\frac{R(t)^{4-\sigma_1}}{1+|y|^4}\|g^{0}\|_{2+\sigma_1,\nu}+\frac{R(t)^{3-\sigma_1}}{1+|y|^3}\|h^{0}\|_{1+\sigma_1,\nu}\right)~ \text{in} \ B_{2 R}^{+}.$$
Define
\begin{align*}
\phi_{0}[g^{0},h^{0}]:=
\begin{cases}
L_{U}[\Phi]|_{B_{2 R}^{+}} &~ \text{in}~  B_{2 R}^{+} \times\left(\tau_{0}, \infty\right), \\
L_{U}[\Phi]|_{\partial B_{2R}^{+}\cap\mathbb{R}^3} & \text { in }~  (\partial B_{2R}^{+}\cap\mathbb{R}^3)\times\left(\tau_{0},\infty\right).
\end{cases}
\end{align*}
Then $\phi=\phi^{0}$ solves problem \eqref{inner4.4} with $c(\tau)=\lambda_{0}c_{0}(\tau)$ and satisfies
\begin{equation}\label{inner33:4}
|\phi^{0}[g^{0},h^{0}](y,\tau)|\lesssim\tau^{-\tilde{\nu}} \left(\frac{R(t)^{4-\sigma_1}}{1+|y|^4}\|g^{0}\|_{2+\sigma_1,\nu}+\frac{R(t)^{3-\sigma_1}}{1+|y|^3}\|h^{0}\|_{1+\sigma_1,\nu}\right) ~\text{in}~   B_{2 R}^{+}\times(\tau_{0},+\infty).
\end{equation}
Therefore, \eqref{inner25:2} holds for $\phi=\phi^0$, and by \eqref{inner27:4}, \eqref{inner7:4} holds for $c(\tau)=\lambda_{0}c_{0}(\tau)$.
\end{proof}

\begin{remark}
Let $0<\sigma_1<1$, $\nu>0$, define
$$\|\phi^0\|_{\sigma_1,\nu}:= \sup _{\tau>\tau_{0}}\sup_{y\in B_{2 R}^{+}} \mu_0^{-\nu}(1+|y|^{\sigma_1}) \left[|\phi(y,\tau)|+(1+|y|)|\nabla \phi(y,\tau)|\right].$$
Lemma \ref{lemma4.3} implies that
$$\|\phi^0\|_{\sigma_1,\nu}\lesssim \left(\|g^{0}\|_{2+\sigma_1, \nu}+\|h^{0}\|_{1+\sigma_1,\nu}\right).$$
\end{remark}

\subsection{The case of modes $j\geq1$}
In this subsection, we construct the solution $\phi^{\perp}$ to problem \eqref{inner4.6}. Since ODE techniques are inapplicable here, we employ a blow-up argument.

For generality, we consider a more general right-hand side than strictly required for proving Proposition \ref{proposition4.2}. Hereafter, $\phi$ denotes $\phi^{\perp}$ in the proposition, and we extend $g^{\perp}$, $h^{\perp}$ by zero outside $B_{2R}^{+}$ and $\partial{B}_{2R}^{+}\cap{\mathbb{R}}^3$, respectively, for all $\tau > \tau_0$, i.e., we solve the following problem
\begin{equation}\label{inner4}
\begin{cases}
\phi_{\tau}=\Delta\phi+g^{\perp}(y,\tau) & \text { in }~  \mathbb{R}_{+}^4\times\left(\tau_{0}, \infty\right), \\
-\frac{d \phi}{d y_4}(\tilde{y},0,\tau)=2U(\tilde{y}, 0) \phi(\tilde{y}, 0, \tau)+h^{\perp}(\tilde{y}, 0, \tau) &  \text { in }~ \mathbb{R}^3\times\left(\tau_{0}, \infty\right) , \\
\phi\left(\cdot, \tau_{0}\right)=0 &  \text { in }~ \mathbb{R}_{+}^4.
\end{cases}
\end{equation}
By construction, $\phi = \phi^{\perp}$ satisfies the orthogonality
$$\int_{\mathbb{R}^3}\phi(\tilde{y},0,\tau)\widetilde{Z}_{0}(\tilde y)d\tilde{y}=0.$$
Define the weighted norms
\begin{align*}
&\|g^{\perp}\|_{3+\sigma_2,\tau_{1}}:= \sup _{\tau \in\left(\tau_{0}, \tau_{1}\right)}\sup_{y\in \mathbb{R}^4_{+}} \tau^{\tilde\nu}(1+|y|^{3+\sigma_2}) |g^{\perp}(y,\tau)|,
\\& \|h^{\perp}\|_{2+\sigma_2,\tau_{1}}:= \sup _{\tau \in\left(\tau_{0}, \tau_{1}\right)}\sup_{\tilde{y}\in \mathbb{R}^3} \tau^{\tilde\nu}(1+|\tilde{y}|^{2+\sigma_2}) |h^{\perp}(\tilde{y},\tau)|,
\\& \|\phi\|_{1+\sigma_2,\tau_{1}}:=\sup _{\tau \in\left(\tau_{0}, \tau_{1}\right)}\sup_{y\in \mathbb{R}^4_{+}} \tau^{\tilde\nu}(1+|y|^{1+\sigma_2})|\phi(y,\tau)|.
\end{align*}
For problem \eqref{inner4}, we establish the following lemma.

\begin{lemma}\label{lemma4}
Assume $0<\sigma_2<1$, $\nu>0$, $\tilde\nu>0$ with $\mu_0^\nu(t)\sim\tau^{-\tilde{\nu}}$, $\|g^{\perp}\|_{3+\sigma_2,\nu}<+\infty$, $\|h^{\perp}\|_{2+\sigma_2,\nu}<+\infty$, and
\begin{equation*}
\int_{B_{2R}^{+}} g^{\perp}(y, \tau)Z_{i}(y) d y+\int_{\partial B_{2R}^{+}\cap{\mathbb{R}}^3} h^{\perp}(\tilde{y}, 0, \tau) Z_{i}(\tilde{y},0) d \tilde{y}=0
\end{equation*}
for all $ \tau\in(\tau_{0}, \infty)$ and $i=1,2,3.$ Then for sufficiently large $\tau_{1}>\tau_0$, there exists a solution $\phi(y,\tau)$ to problem \eqref{inner4} satisfying
\begin{equation}\label{inner6}
\\\|\phi\|_{1+\sigma_2,\tau_{1}}\lesssim \|g^{\perp}\|_{3+\sigma_2,\tau_{1}}+\|h^{\perp}\|_{2+\sigma_2,\tau_{1}}.
\end{equation}
\end{lemma}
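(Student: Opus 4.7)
The plan is to follow the standard blow-up argument for linear parabolic inner problems around a bubble, in the spirit of \cite{Davila-del-Pino-Wei2020, Cortazar-del-Pino-Musso2020}, first establishing the a priori estimate \eqref{inner6} by contradiction and then constructing the solution by a truncation--approximation procedure. A preliminary observation is that testing \eqref{inner4} against $Z_i$ for $i=1,2,3$ and using $L_U[Z_i]=0$ together with the hypothesized orthogonality of $(g^\perp,h^\perp)$ implies that $\int_{\mathbb{R}_+^4}\phi(y,\tau)Z_i(y)\,dy\equiv 0$ for all $\tau>\tau_0$, since $\phi(\cdot,\tau_0)=0$ kills the integration constant. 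Moreover, because $\phi^\perp$ is built in the higher spherical-harmonic modes $j\geq 1$, it is automatically orthogonal to the radial kernel elements $Z_0$ and $Z_4$.

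To prove the a priori bound, assume it fails. Then there exist sequences $\tau_1^{(n)}\to\infty$, data $(g_n^\perp,h_n^\perp)$, and solutions $\phi_n$ such that $\|\phi_n\|_{1+\sigma_2,\tau_1^{(n)}}=1$ while $\|g_n^\perp\|_{3+\sigma_2,\tau_1^{(n)}}+\|h_n^\perp\|_{2+\sigma_2,\tau_1^{(n)}}\to 0$. Pick points $(y_n,\tau_n^\star)$ where the weighted supremum is essentially attained; the zero initial condition together with local parabolic regularity forces $\tau_n^\star\to\infty$. I would then split into two regimes. In the inner regime $|y_n|=O(1)$, translate time via $\hat\phi_n(y,s):=(\tau_n^\star)^{\tilde\nu}\phi_n(y,\tau_n^\star+s)$ and extract a subsequential limit $\hat\phi_\infty$ on $\mathbb{R}_+^4\times\mathbb{R}$ that solves $L_U[\hat\phi_\infty]=0$, is bounded with decay $(1+|y|)^{-(1+\sigma_2)}$, and is orthogonal to all of $Z_0,\dots,Z_4$; the non-degeneracy of $L_U$ recalled just before \eqref{Jul26-10} then forces $\hat\phi_\infty\equiv 0$, contradicting the normalization.

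In the outer regime $\lambda_n:=|y_n|\to\infty$, use the parabolic rescaling
$$\hat\phi_n(z,s):=\lambda_n^{1+\sigma_2}(\tau_n^\star)^{\tilde\nu}\phi_n(\lambda_n z,\tau_n^\star+\lambda_n^2 s),$$
exploiting the decay $U(\lambda_n\tilde z,0)\sim \lambda_n^{-2}|\tilde z|^{-2}$ to remove the Steklov boundary term in the limit. After even reflection across $\{z_4=0\}$ one obtains a bounded caloric function $\hat\phi_\infty$ on $\mathbb{R}^4\times(-\infty,0]$ with $|\hat\phi_\infty(z,s)|\leq |z|^{-(1+\sigma_2)}$ and a nonzero value on the unit sphere at $s=0$; a Liouville theorem for algebraically decaying caloric functions then yields $\hat\phi_\infty\equiv 0$, another contradiction. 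Once the a priori bound is in hand, existence is obtained by solving \eqref{inner4} on bounded intervals $(\tau_0,\tau_1)$ with data cut off to compact support, via classical parabolic theory for the Neumann-type problem with the $L^\infty$ potential $2U(\tilde y,0)$, and passing to the limit using the uniform weighted estimate together with local parabolic regularity.

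The main obstacle I foresee is the outer-regime Liouville step, because the Steklov term $2U(\tilde y,0)\phi$ is critical relative to the weight $(1+|y|)^{-(1+\sigma_2)}$ given $U(\tilde y,0)\sim |\tilde y|^{-2}$: one needs both $\sigma_2>0$ and the eternal (rather than ancient) structure coming from $\tau_n^\star\to\infty$ to rule out nontrivial limits, and additional care may be required with subcases of the ratio $\lambda_n^2/\tau_n^\star$, possibly involving an intermediate rescaling when $\lambda_n^2$ is comparable to $\tau_n^\star-\tau_0$. A secondary technical point is justifying $|\hat\phi_n(z,s)|\lesssim |z|^{-(1+\sigma_2)}$ uniformly in $n$ for $s<0$ in the outer rescaling, which requires the weighted norm to be read at the shifted time $\tau_n^\star+\lambda_n^2 s$ and hence a lower bound for $\tau_n^\star/\lambda_n^2$.
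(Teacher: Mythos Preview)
Your overall blow-up strategy matches the paper's, but there is a genuine gap in the inner regime and a substantive methodological difference in the outer regime.

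\textbf{Inner regime.} After time-translating by $\tau_n^\star$ and passing to the limit, you obtain an \emph{ancient parabolic} solution $\hat\phi_\infty$ of
\[
\partial_\tau\hat\phi_\infty=\Delta\hat\phi_\infty \ \text{in}\ \mathbb{R}_+^4\times(-\infty,0],\qquad
-\partial_{y_4}\hat\phi_\infty=2U(\tilde y,0)\hat\phi_\infty \ \text{on}\ \mathbb{R}^3\times(-\infty,0],
\]
not of the elliptic system $L_U[\hat\phi_\infty]=0$. You cannot invoke the non-degeneracy of $L_U$ until you have shown $\partial_\tau\hat\phi_\infty\equiv 0$, and this step is missing from your proposal. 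The paper closes this gap with an energy argument: differentiating in $\tau$ and testing against $\partial_\tau\hat\phi_\infty$ gives
\[
\tfrac{1}{2}\partial_\tau\int_{\mathbb{R}_+^4}|\partial_\tau\hat\phi_\infty|^2\,dy+B(\partial_\tau\hat\phi_\infty,\partial_\tau\hat\phi_\infty)=0,
\]
while testing the original equation gives $\int|\partial_\tau\hat\phi_\infty|^2=-\tfrac12\partial_\tau B(\hat\phi_\infty,\hat\phi_\infty)$. Since $\hat\phi_\infty$ lies in modes $j\geq 1$ (hence orthogonal to $Z_0$), one has $B\geq 0$; boundedness of $B(\hat\phi_\infty,\hat\phi_\infty)$ forces $\int_{-\infty}^0\int|\partial_\tau\hat\phi_\infty|^2<\infty$, and monotonicity then yields $\partial_\tau\hat\phi_\infty=0$. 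Only then does the elliptic non-degeneracy apply.

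\textbf{Outer regime.} The paper does \emph{not} pass to a caloric limit and invoke a Liouville theorem. Instead it writes the rescaled solution $\tilde\phi_k$ (centered at $y_k$, not at the origin) directly via the Duhamel formula with the explicit Neumann heat kernels \eqref{outer20} and \eqref{outer3}, and bounds $|\tilde\phi_k(0,0)|$ by brute-force integral estimates on the source terms (see \eqref{inner64}--\eqref{inner65}), obtaining $|\tilde\phi_k(0,0)|\lesssim o(1)+|y_k|^{-1}$. This completely bypasses the delicate issues you flag about the ratio $\lambda_n^2/\tau_n^\star$, the eternal-versus-ancient distinction, and the criticality of the Steklov term: no limit equation is ever needed. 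Your Liouville route may be salvageable, but it is strictly harder than what the paper does.
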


\begin{proof}
We prove \eqref{inner6} via blow-up argument. First, we claim that for any given $\tau_{1}>\tau_0$,
\begin{equation*}
\|\phi\|_{1+\sigma_2,\tau_{1}}<+\infty.
\end{equation*}	
Indeed, standard parabolic theory yields the local boundedness in time and space of $\phi(y,\tau)$. Precisely, for any $R_{0}>0$, there exists $K=K(R_{0},\tau_{1})$ such that
$$|\phi(y,\tau)|<K   \quad\text{in} \ B^{+}_{R_{0}}(0)\times(\tau_{0},\tau_{1}].$$
For a sufficiently large fixed $R_{0}$, choosing appropriate constants $K_{1}$ and $K_{2}$ such that $\frac{K_{1}}{\rho^{1+\sigma_2}}-\frac{K_{2}y_4}{\rho^{2+\sigma_2}}$ (where $\rho=|y|$) is a positive supersolution to \eqref{inner4} for $\rho>R_{0}$. This yields the estimate
$$|\phi|\lesssim \frac{K_{1}}{\rho^{1+\sigma_2}}-\frac{K_{2}y_4}{\rho^{2+\sigma_2}},$$
and consequently,
$$\|\phi\|_{1+\sigma_2,\tau_{1}}<+\infty.$$

Next, we establish the orthogonality condition,
\begin{equation}\label{inner11}
\int_{\mathbb{R}_{+}^4} \phi(y, \tau) Z_{i}(y) d y=0, \quad \forall \tau \in\left(\tau_{0}, \tau_{1}\right),\ i=1,2,3.
\end{equation}
To verify this, test problem \eqref{inner4} against $Z_{i}(y)\eta(y)$, where
$$\eta(y)=\eta_{0}\left(\frac{|y|}{R_{1}}\right),\quad R_{1}>0,$$
and $\eta_{0}$ is the smooth cut-off function as defined in \eqref{estimate2.6}.
Integrating over $\mathbb{R}_{+}^4$ yields
\begin{align*}
&\int_{\mathbb{R}_{+}^4} \phi(y, \tau) \eta(y) Z_{i}(y) dy \\
&=\int_{\tau_0}^{\tau} \int_{\mathbb{R}_{+}^4} \left[\Delta \phi(y,s) \eta(y) Z_{i}(y) + g^{\perp}(y,s) \eta(y) Z_{i}(y)\right] dyds \\
&=\int_{\tau_0}^{\tau} \int_{\mathbb{R}_{+}^4} \left[\phi(y,s) \Delta\left(\eta Z_{i}\right)(y) + g^{\perp}(y,s) \eta(y) Z_{i}(y)\right] dyds \\
&\quad +\int_{\tau_0}^{\tau} \int_{\mathbb{R}^3} h^{\perp}(\tilde{y},0,s) \eta(\tilde{y},0) Z_{i}(\tilde{y},0) d\tilde{y}ds \\
&\quad +\int_{\tau_0}^{\tau} \int_{\mathbb{R}^3} \phi(\tilde{y},0,s) \left[2U(\tilde{y},0) \eta(\tilde{y},0) Z_{i}(\tilde{y},0) - \frac{\partial}{\partial \nu}\left(\eta Z_{i}\right)(\tilde{y},0)\right] d\tilde{y}ds.
\end{align*}
On the other hand, direct computation shows that
\begin{align*}
&\int_{\mathbb{R}_{+}^4} \left[ \phi(y,s) \Delta\left(\eta Z_{i}\right)(y) + g^{\perp}(y,s) \eta(y) Z_{i}(y) \right] dy + \int_{\mathbb{R}^3} h^{\perp}(\tilde{y},0,s) \eta(\tilde{y},0) Z_{i}(\tilde{y},0) d\tilde{y}
\\&+ \int_{\mathbb{R}^3} \phi(\tilde{y},0,s) \left[ 2U(\tilde{y},0) \eta(\tilde{y},0) Z_{i}(\tilde{y},0) - \frac{\partial}{\partial \nu}\left(\eta Z_{i}\right)(\tilde{y},0) \right] d\tilde{y}
\\&= O(R_{1}^{-a})
\end{align*}
uniformly for $s \in (\tau_{0}, \tau)$ and $\tau \in (\tau_0, \tau_1)$ with some constant $a > 0$. Letting $R_1 \to +\infty$, we conclude \eqref{inner11}.

Now we prove \eqref{inner6} by contradiction. Suppose there exist sequences $\{\tau_1^k\}_{k=1}^{\infty} \to \infty$, $\{\phi_k\}_{k=1}^{\infty}$, $\{g_k\}_{k=1}^{\infty}$, $\{h_k\}_{k=1}^{\infty}$ satisfying
\begin{align*}
\begin{cases}
\partial_{\tau}\phi_{k}=\Delta \phi_{k}+g_{k}(y, \tau) & \text { in }~ \mathbb{R}_{+}^4 \times\left(\tau_{0}, \infty\right), \\
-\frac{d \phi_{k}}{d y_4}(\tilde{y}, 0, \tau)=2U(\tilde{y}, 0) \phi_{k}(\tilde{y}, 0, \tau)+h_{k}(\tilde{y}, 0, \tau) & \text { in }~ \mathbb{R}^3 \times\left(\tau_{0}, \infty\right), \\
\int_{\mathbb{R}_{+}^4} \phi_{k}(y, \tau) Z_{i}(y) d y=0, &~ \forall \tau \in \left(\tau_{0}, \tau_{1}^{k}\right),~i=1,2,3,
\\ \phi_{k}\left(\cdot, \tau_{0}\right)=0 & \text { in }~ \mathbb{R}_{+}^4 \\  \end{cases}
\end{align*}
with
\begin{equation}\label{inner13}
\|\phi_{k}\|_{1+\sigma_2,\tau_{1}^{k}}=1,\quad\|g_{k}\|_{3+\sigma_2,\tau_{1}^{k}}\to 0,\quad\|h_{k}\|_{2+\sigma_2,\tau_{1}^{k}} \to 0.
\end{equation}
We show that for any compact subset $\Omega\subset\mathbb{R}_+^4$,
\begin{equation}\label{inner14}
\sup _{\tau_{0}<\tau<\tau_{1}^{k}}\tau^{\tilde\nu}\left|\phi_{k}(y, \tau)\right| \to 0 \quad \text{uniformly on } \Omega.
\end{equation}
Suppose not. Then there exist $M>0$, sequences $\{y_{k}\}\in \mathbb{R}^4_{+}$ with $|y_{k}|\leq M$, and $\{\tau_{2}^{k}\}\subset (\tau_0, \tau_1^k)$ such that
$$\left(\tau_{2}^{k}\right)^{\tilde\nu}\left(1+\left|y_{k}\right|^{1+\sigma_2}\right)\left|\phi_{k}\left(y_{k},\tau_{2}^{k}\right)\right| \geq \frac{1}{2}.$$
Clearly, by \eqref{inner13}, we have $\tau_2^k \to \infty$. Define scaled functions
\begin{align*}
&\bar{\phi}_{k}(y, \tau)=\left(\tau_{2}^{k}\right)^{\tilde\nu} \phi_{k}\left(y, \tau_{2}^{k}+\tau\right),\quad \bar{g}_{k}(y, \tau)=\left(\tau_{2}^{k}\right)^{\tilde\nu} g_{k}\left(y, \tau_{2}^{k}+\tau\right),
\\&\bar{h}_{k}(\tilde{y},0, \tau)=\left(\tau_{2}^{k}\right)^{\tilde\nu} h_{k}\left(\tilde{y},0,\tau_{2}^{k}+\tau\right).
\end{align*}
Then
\begin{align*}
\begin{cases}\partial_{\tau}\bar{\phi}_{k}=\Delta \bar{\phi}_{k}+\bar{g}_{k}(y, \tau) & \text { in }~ \mathbb{R}_{+}^4 \times\left(\tau_{0}-\tau_{2}^{k}, \infty\right), \\
-\frac{d \bar{\phi}_{k}}{d y_4}(\tilde{y}, 0, \tau)=2 U(\tilde{y}, 0) \bar{\phi}_{k}(\tilde{y}, 0, \tau)+\bar{h}_{k}(\tilde{y}, 0, \tau) & \text { in }~ \mathbb{R}^3 \times\left(\tau_{0}-\tau_{2}^{k}, \infty\right),
\end{cases}
\end{align*}
with $\bar{g}_{k}\to 0$, $\bar{h}_{k}\to 0$ uniformly on compact subsets of $\mathbb{R}_{+}^4 \times(-\infty, 0)$ and $\mathbb{R}^3 \times(-\infty, 0)$, respectively. Additionally,
$$|\bar{\phi}_{k}(y,\tau)|\leq\frac{1}{1+|y|^{1+\sigma_2}}\qquad\text{in}~ \mathbb{R}_{+}^4 \times\left(\tau_{0}-\tau_{2}^{k}, 0\right].$$
By parabolic regularity \cite{Quittner-Souplet2019}, up to a subsequence (still denoted as $\bar{\phi}_{k}$), $\bar{\phi}_{k}\to\bar{\phi}\not\equiv 0$ uniformly on compact subsets $\mathbb{R}_{+}^4\times(-\infty,0]$, and satisfy
\begin{equation}\label{inner4:61}
\begin{cases}
\partial_{\tau}\bar{\phi}=\Delta \bar{\phi} & \text { in }~ \mathbb{R}_{+}^4 \times(-\infty, 0] ,\\
-\frac{d \bar{\phi}}{d y_4}(\tilde{y}, 0, \tau)=2 U(\tilde{y}, 0) \bar{\phi}(\tilde{y}, 0, \tau) & \text { in }~ \mathbb{R}^3 \times(-\infty, 0], \\
\int_{\mathbb{R}_{+}^4} \bar{\phi}(y, \tau) Z_{i}(y) d y=0, &~ \forall \tau \in(-\infty,0 ],\ i=1,2,3,\\
|\bar{\phi}(y,\tau)|\leq\frac{1}{1+|y|^{1+\sigma_2}} &~ \text{in}~ \mathbb{R}_{+}^4 \times(-\infty, 0].
 \end{cases}
 \end{equation}
We now derive a contradiction by showing $\bar\phi\equiv0$.
Indeed, $\bar\phi(y,\tau)$ is smooth. A scaling argument gives the estimate
\begin{equation}\label{inner15}
(1+|y|)^{-1}|D_{y}\bar{\phi}|+|\bar{\phi}_{\tau}|+|D_{y}^{2}\bar{\phi}|\lesssim (1+|y|)^{-3-\sigma_2}\quad\text{in}\ \mathbb{R}_{+}^4 \times(-\infty, 0].
\end{equation}
Differentiating equation \eqref{inner4:61} with respect to $\tau$, we obtain
\begin{equation}\label{inner:60:0}
\begin{cases}
\partial_{\tau}\bar{\phi}_{\tau}=\Delta \bar{\phi}_{\tau} & \text { in }~ \mathbb{R}_{+}^4 \times(-\infty, 0] ,\\
-\frac{d \bar{\phi}_{\tau}}{d y_4}(\tilde{y}, 0, \tau)=2U(\tilde{y}, 0) \bar{\phi}_{\tau}(\tilde{y}, 0, \tau) & \text { in }~ \mathbb{R}^3 \times(-\infty, 0],
\end{cases}
\end{equation}
and then get the improved estimate
\begin{equation}\label{inner16}
(1+|y|)^{-1}|D_{y}\bar{\phi}_{\tau}|+|\bar{\phi}_{\tau\tau}|+|D_{y}^{2}\bar{\phi}_{\tau}|\lesssim (1+|y|)^{-5-\sigma_2}\quad\text{in}\ \mathbb{R}_{+}^4 \times(-\infty, 0].
\end{equation}
Testing the first equation in \eqref{inner:60:0} by $\bar{\phi}_{\tau}$ and integrating over $\mathbb{R}_{+}^4$, we find
$$\int_{\mathbb{R}_{+}^4} (\partial_{\tau} \bar{\phi}_{\tau} ) \bar{\phi}_{\tau}dy=\int_{\mathbb{R}_{+}^4} \tilde{\phi}_{\tau}  \Delta \tilde{\phi}_{\tau}dy=-B\left(\bar{\phi}_{\tau}, \bar{\phi}_{\tau}\right),$$
where
$$B(\bar{\phi}, \bar{\phi}) =\int_{\mathbb{R}_{+}^4}|\nabla \bar{\phi}|^{2} dy-\int_{\mathbb{R}^3} 2U(\tilde{y}, 0) \bar{\phi}^{2}(\tilde{y}, 0, \tau) d \tilde{y} .$$
From \eqref{inner15}, \eqref{inner16}, and the parabolic regularity theory,
$$|B(\bar{\phi}, \bar{\phi})|<+\infty,\qquad|B(\bar{\phi}_{\tau}, \bar{\phi}_{\tau})|<+\infty.$$
It then follows that
\begin{equation}\label{inner17}
\frac{1}{2}\partial_{\tau}\int_{\mathbb{R}_{+}^4}|\bar{\phi}_{\tau}|^{2}dy+B\left(\bar{\phi}_{\tau}, \bar{\phi}_{\tau}\right)=0.
\end{equation}
Recall that
$$\int_{\mathbb{R}^3}\bar{\phi}(\tilde{y},0,\tau)\widetilde{Z}_{0}(\tilde y)d\tilde{y}=0,\quad \forall \tau \in(-\infty,0],$$
implying
$$B(\bar{\phi}, \bar{\phi})\geq 0,\quad B(\bar{\phi}_{\tau}, \bar{\phi}_{\tau})\geq 0.$$
From \eqref{inner17}, we get
$$\frac{1}{2}\partial_{\tau}\int_{\mathbb{R}_{+}^4}|\bar{\phi}_{\tau}|^{2}dy\leq 0.$$
On the other hand, multiplying $\bar{\phi}_{\tau}=\Delta \bar{\phi}$ by $\bar{\phi}_{\tau}$ and integrating over $\mathbb{R}_{+}^4$ gives
$$\int_{\mathbb{R}_{+}^4}|\bar{\phi}_{\tau}|^{2}dy=-\frac{1}{2}\partial_\tau B(\bar{\phi}, \bar{\phi}).$$
Thus,
$$\int_{-\infty}^{0}\int_{\mathbb{R}_{+}^4}|\bar{\phi}_{\tau}|^{2}dyd\tau<+\infty,$$
which implies
$$\bar{\phi}_{\tau}=0.$$
Hence, $\bar{\phi}$ is independent of $\tau$ and solves
\begin{align*}
\begin{cases}
\Delta \bar{\phi}=0 & \text { in }~ \mathbb{R}_{+}^4 \times(-\infty, 0], \\
-\frac{d \bar{\phi}}{d y_4}(\tilde{y}, 0, \tau)=2U(\tilde{y}, 0) \bar{\phi}(\tilde{y}, 0, \tau) & \text { in }~\mathbb{R}^3 \times(-\infty, 0].
\end{cases}
\end{align*}
Since $\bar{\phi}$ is bounded, the nondegeneracy of the linearized operator $L_{U}$ \cite{Davila-del-Pino-Sire2013} implies $\bar{\phi}$ is a linear combination of $Z_{i}$, $i=1,\dots,4.$ Then by
$$\int_{\mathbb{R}^4_{+}}\bar{\phi}(y, \tau)Z_{i}(y)dy=0,\ i=1,\dots,4,$$
we conclude that $\bar{\phi}=0$, a contradiction. Therefore, \eqref{inner14} holds.

From \eqref{inner13}, there exist sequences $\{y_k\}$ with $|y_k| \to \infty$ and $\{\tau_2^k\} > 0$ satisfying
$$\left(\tau_{2}^{k}\right)^{\tilde\nu}\left|y_{k}\right|^{1+\sigma_2}\left|\phi_{k}\left(y_{k},\tau_{2}^{k}\right)\right| \geq \frac{1}{2}.$$
Define
$$\tilde{\phi}_{k}(z,\tau):=\left(\tau_{2}^{k}\right)^{\tilde\nu}|y_{k}|^{1+\sigma_2} \phi_{k}\left(y_{k}+|y_{k}|z,|y_{k}|^{2}\tau+\tau_{2}^{k}\right),$$
which solves
\begin{equation}\label{inner62}
\begin{cases}
\partial_{\tau}\tilde{\phi}_{k}=\Delta_{z} \tilde{\phi}_{k}+\tilde{g}_{k}(z,\tau) & \text{in} ~ \mathbb{R}^4_{+}\times\left(\frac{\tau_{0}-\tau_{2}^{k}}{|y_{k}|^{2}},\infty \right), \\
-\frac{d \tilde{\phi}_{k}}{d z_4}(\tilde{z},0,\tau)=a_{k}\tilde{\phi}_{k}(\tilde{z},0,\tau)+\tilde{h}_{k}(\tilde{z},\tau) & \text{in}~ \mathbb{R}^3\times\left(\frac{\tau_{0}-\tau_{2}^{k}}{|y_{k}|^{2}},\infty \right),\\
\tilde{\phi}_{k}(\cdot,\frac{\tau_{0}-\tau_{2}^{k}}{|y_{k}|^{2}})=0 & \text{in}~ \mathbb{R}^4_{+},
\end{cases}
\end{equation}
where
\begin{align*}
&\tilde{g}_{k}(z,\tau)=\left(\tau_{2}^{k}\right)^{\tilde\nu}|y_{k}|^{3+\sigma_2}g_{k}\left(y_{k}+|y_{k}|z,|y_{k}|^{2}\tau+\tau_{2}^{k}\right),\quad
a_{k}=|y_{k}|2U\left(\tilde{y}_{k}+|y_{k}|\tilde{z},0\right),
\\&\tilde{h}_{k}(\tilde{z},\tau)=\left(\tau_{2}^{k}\right)^{\tilde\nu}|y_{k}|^{2+\sigma_2}h_{k}\left(\tilde{y}_{k}+|y_{k}|\tilde{z},0,|y_{k}|^{2}\tau+\tau_{2}^{k}\right).
\end{align*}
By \eqref{inner13}, \eqref{inner14}, and the asymptotic behavior $\widetilde{Z}_{0}(\tilde{y})\sim |\tilde{y}|^{-4}$ as $|\tilde{y}|\to +\infty$, combined with the estimate
$$\tilde{\phi}_{k}(z,\tau)\leq\left(|y_{k}|^{-1}+|y_{k}|y_k|^{-1}+z|\right)^{-1-\sigma_2}\left[1+(\tau_{2}^{k})^{-1}|y_{k}|^{2}\tau\right]^{-\tilde\nu},$$
where $\hat{y}_{k}=\frac{\tilde{y}_{k}}{|y_{k}|}$, we obtain for
$(z,\tau)\in \mathbb{R}^4_{+}\times\left(\frac{\tau_{0}-\tau_{2}^{k}}{|y_{k}|^{2}},\frac{\tau_{1}^{k}-\tau_{2}^{k}}{|y_{k}|^{2}}\right)$
and
$(\tilde{z},\tau)\in \mathbb{R}^3\times\left(\frac{\tau_{0}-\tau_{2}^{k}}{|y_{k}|^{2}},\frac{\tau_{1}^{k}-\tau_{2}^{k}}{|y_{k}|^{2}}\right)$ that
\begin{align*}
|\tilde{g}_{k}(z,\tau)|
&\lesssim o(1)|y_{k}|^{3+\sigma_2}\left(1+|y_{k}+|y_{k}|z|\right)^{-3-\sigma_2}\left[1+(\tau_{2}^{k})^{-1}|y_{k}|^{2}\tau\right]^{-\tilde\nu}
\\&\lesssim o(1)\left(|y_{k}|^{-1}+|y_{k}|y_k|^{-1}+z|\right)^{-3-\sigma_2}\left[1+(\tau_{2}^{k})^{-1}|y_{k}|^{2}\tau\right]^{-\tilde\nu},
\\|a_{k}\tilde{\phi}_{k}(\tilde{z},0,\tau)|
&\lesssim |y_{k}|^{-1}\left(|y_{k}|^{-1}+|\hat{y}_{k}+\tilde{z}|\right)^{-3-\sigma_2}\left[1+(\tau_{2}^{k})^{-1}|y_{k}|^{2}\tau\right]^{-\tilde\nu},
\\|\tilde{h}_{k}(\tilde{z},\tau)|
&\lesssim o(1)\left(|y_{k}|^{-1}+|\hat{y}_{k}+\tilde{z}|\right)^{-2-\sigma_2}\left[1+(\tau_{2}^{k})^{-1}|y_{k}|^{2}\tau\right]^{-\tilde\nu}.
\end{align*}
Note that by construction
\begin{equation}\label{inner63}
|\tilde{\phi}_{k}(0,0)|=\left(\tau_{2}^{k}\right)^{\tilde\nu}|y_{k}|^{1+\sigma_2} \phi_{k}(y_{k}, \tau_{2}^{k})\geq\frac{1}{2}.
\end{equation}
Let $\tau_{3}^{k}=\frac{\tau_{0}-\tau_{2}^{k}}{|y_{k}|^{2}}$. The solution to \eqref{inner62} is given by
\begin{align*}
\tilde{\phi}_{k}(z,\tau)=&~\int_{\tau_{3}^{k}}^{\tau}\int_{\mathbb{R}^4_{+}}K_{\tau-r}(z,w)\tilde{g}_{k}(w,r)dwdr
\\&~+\int_{\tau_{3}^{k}}^{\tau}\int_{\mathbb{R}^3}K(\tilde{z}-\widetilde{w},z_4,\tau-r)\left[a_{k}\tilde{\phi}_{k}(\widetilde{w},0,r)+\tilde{h}_{k}(\widetilde{w},r)\right]d\widetilde{w}dr,
\end{align*}
where $K_t(x,y)$ and $K(\tilde{x},x_4,t)$ are defined in \eqref{outer20} and \eqref{outer3}, respectively. Thus
\begin{align*}
&|\tilde{\phi}_{k}(0,0)| \\&\lesssim\int_{\tau_{3}^{k}}^{0}\int_{\mathbb{R}^4_{+}}o(1)\frac{1}{(-r)^{2}}e^{\frac{|w|^{2}}{4r}}\left(|y_{k}|^{-1}+|y_{k}|y_k|^{-1}+w|\right)^{-3-\sigma_2}\left[1+(\tau_{2}^{k})^{-1}|y_{k}|^{2}r\right]^{-\tilde\nu}dwdr
\\&\quad+\int_{\tau_{3}^{k}}^{0}\int_{\mathbb{R}^3}\frac{|y_{k}|^{-1}}{(-r)^{2}}e^{\frac{|\widetilde{w}|^{2}}{4r}}\left(|y_{k}|^{-1}+|\hat{y}_{k}+\widetilde{w}|\right)^{-3-\sigma_2}\left[1+(\tau_{2}^{k})^{-1}|y_{k}|^{2}r\right]^{-\tilde\nu}d\widetilde{w}dr
\\&\quad+\int_{\tau_{3}^{k}}^{0}\int_{\mathbb{R}^3}o(1)\frac{1}{(-r)^{2}}e^{\frac{|\widetilde{w}|^{2}}{4r}}\left(|y_{k}|^{-1}+|\hat{y}_{k}+\widetilde{w}|\right)^{-2-\sigma_2}\left[1+(\tau_{2}^{k})^{-1}|y_{k}|^{2}r\right]^{-\tilde\nu}d\widetilde{w}dr.
\end{align*}
From standard hear kernel estimates,
suppose $\tau_2^k > 2\tau_0$, $|y_k| \geq 4$, $2<m<3$, then
\begin{equation}\label{inner64}
\begin{aligned}
&\int_{\tau_{3}^{k}}^{0}\int_{\mathbb{R}^4_{+}}\frac{1}{(-r)^{2}}e^{\frac{|w|^{2}}{4r}}\left(|y_{k}|^{-1}+|y_{k}|y_k|^{-1}+w|\right)^{-m}\left[1+(\tau_{2}^{k})^{-1}|y_{k}|^{2}r\right]^{-\tilde\nu}dwdr
\\&\lesssim\int_{\tau_{3}^{k}}^{0}\int_{\mathbb{R}^3}\frac{1}{(-r)^{\frac{3}{2}}}e^{\frac{|\widetilde{w}|^{2}}{4r}}\left(|y_{k}|^{-1}+|\hat{y}_{k}+\widetilde{w}|\right)^{-m}\left[1+(\tau_{2}^{k})^{-1}|y_{k}|^{2}r\right]^{-\tilde\nu}d\widetilde{w}dr
\\&\lesssim 1,
\end{aligned}
\end{equation}
and
\begin{equation}\label{inner65}
\int_{\tau_{3}^{k}}^{0}\int_{\mathbb{R}^3}\frac{1}{(-r)^{2}}e^{\frac{|\widetilde{w}|^{2}}{4r}}\left(|y_{k}|^{-1}+|\hat{y}_{k}+\widetilde{w}|\right)^{-m}\left[1+(\tau_{2}^{k})^{-1}|y_{k}|^{2}r\right]^{-\tilde\nu}d\widetilde{w}dr
\lesssim 1.
\end{equation}
From (\ref{inner64}) and (\ref{inner65}), for $0 < \sigma_2 < 1$ we have
$$|\tilde{\phi}_{k}(0,0)|\lesssim o(1)+|y_{k}|^{-1},$$
contradicting \eqref{inner63}. Then we complete the proof of Lemma \ref{lemma4}.
\end{proof}

\begin{proof}[Proof of Proposition \ref{proposition4.2}]
For modes $j\geq1$, let $\phi(y,\tau)$ solve the initial value problem \eqref{inner4}. By Lemma \ref{lemma4}, for any $\tau_1 > \tau_0$, there holds
$$|\phi(y,\tau)|\lesssim \tau^{-\tilde{\nu}}(1+|y|)^{-1-\sigma_2} \left(\|g^{\perp}\|_{3+\sigma_2,\tau_{1}}+\|h^{\perp}\|_{2+\sigma_2,\tau_{1}}\right),\  \forall\tau\in(\tau_{0},\tau_{1}),\ y\in\mathbb{R}_{+}^4.
$$
Since
$$\|g^{\perp}\|_{3+\sigma_2,\nu}<+\infty,\quad \|h^{\perp}\|_{2+\sigma_2,\nu}<+\infty$$
and
$$\|g^{\perp}\|_{3+\sigma_2,\tau_{1}}\leq\|g^{\perp}\|_{3+\sigma_2,\nu}, \quad\|h^{\perp}\|_{2+\sigma_2,\tau_{1}}\leq \|h^{\perp}\|_{2+\sigma_2,\nu}$$
hold for arbitrary $\tau_{1}$, it follows that
$$|\phi(y,\tau)|\lesssim \tau^{-\tilde{\nu}}(1+|y|)^{-1-\sigma_2} \left(\|g^{\perp}\|_{3+\sigma_2,\nu}+\|h^{\perp}\|_{2+\sigma_2,\nu}\right),\ \forall  \tau\in(\tau_{0},\tau_{1}),\ y\in\mathbb{R}_{+}^4.$$
By the arbitrariness of $\tau_{1}$, we have
\begin{equation}\label{inner50:4}
|\phi(y,\tau)|\lesssim \tau^{-\tilde{\nu}}(1+|y|)^{-1-\sigma_2} \left(\|g^{\perp}\|_{3+\sigma_2,\nu}+\|h^{\perp}\|_{2+\sigma_2,\nu}\right),\ \forall \tau\in(\tau_{0},+\infty),\ y\in\mathbb{R}_{+}^4.
\end{equation}
Thus, by parabolic regularity theory and a scaling argument, we establish that Proposition \ref{proposition4.2} holds for $\phi=\phi^{\perp}$.

Let $\phi[g,h]:=\phi^{0}[g^{0},h^{0}]+\phi^{\perp}[g^{\perp},h^{\perp}]$, where $\phi^{0}[g^{0},h^{0}]$ and $\phi^{\perp}[g^{\perp},h^{\perp}]$ are constructed in Lemmas \ref{lemma4.3} and \ref{lemma4}, respectively. Then $\phi[g,h]$ solves
\begin{equation}\label{inner6:2}
\begin{cases}
\phi_{\tau}=\Delta \phi+g(y, \tau)-c(\tau)Z_{0}(y) & \text { in }~ B_{2 R}^{+} \times\left(\tau_{0}, \infty\right), \\
\begin{aligned}
-\frac{d \phi}{d y_4}(\tilde{y}, 0, \tau)=&~2U(\tilde{y}, 0) \phi(\tilde{y}, 0, \tau)
+h(\tilde{y}, 0, \tau)
\end{aligned}  & \text { in }~ (\partial B_{2 R}^{+}  \cap\mathbb{R}^3) \times\left(\tau_{0}, \infty\right), \\
\phi\left(y, \tau_{0}\right)=0 & \text { in }~  B_{2 R(0)}^{+},
\end{cases}
\end{equation}
where $c(\tau)=\lambda_{0}c_{0}(\tau)$.  Estimates \eqref{inner33:4} and \eqref{inner50:4} imply that $\phi[g,h]$ satisfies \eqref{inner4:4}. This completes the proof of Proposition \ref{proposition4.2}.
\end{proof}

\medskip

\section{Solving the inner-outer gluing system}\label{6}
In this section, we construct a solution to the inner-outer gluing system \eqref{Jul-inner-problem} and \eqref{Jul-outer-problem} by formulating it as a fixed-point problem for the tuple $\vec{p}=(\phi,\psi,\mu,\xi)$ within a carefully chosen function space. The construction relies on the linear theories developed in Sections \ref{5} and \ref{linear-inner}, and is completed via the Schauder fixed-point theorem.

\subsection{Function spaces and parameter decomposition}\label{Sep26-space}

To set up the fixed-point argument, we first define the function spaces for the inner and outer corrections, as well as the modulation parameters. The choice of these spaces and their associated weighted norms is crucial, they are meticulously designed to:
\begin{enumerate}
\item capture the expected spatial decay and temporal blow-up rates of the solutions,
\item accommodate the singular behavior near the blow-up point,
\item be compatible with the linear theories established in Sections \ref{5} and \ref{linear-inner}, ensuring that the solution operators are bounded.
\end{enumerate}
We therefore introduce the following decompositions and Banach spaces.
The scaling and translation parameters are decomposed as
$$\mu(t)=\mu_{0}(t)+\mu_{1}(t),\quad \xi(t)=\xi_{0}(t)+\xi^{1}(t),\quad t\in[0,T),$$
where $\mu_{0}(t)$ and $\xi_{0}(t)$ are defined in \eqref{Aug1-1} and \eqref{solution2}, respectively, with $\mu_{1}(T)=0$ and $\xi^{1}(T)=0$. Furthermore, we assume that the time derivatives satisfy the comparative bound
$$
c_1|\dot{\mu}_0(t)|\leq |\dot{\mu}(t)|\leq c_2|\dot{\mu}_0(t)|\quad \text{for all}~t\in(0,T),
$$
for some constants $c_1, c_2>0$.

The inner correction $\phi$ and the outer correction $\psi$ are decomposed according to their spectral properties and roles in the boundary conditions:
$$
\phi=\phi^0+\phi^{\perp},\qquad \psi=\psi_1+\psi_2.$$
Here, $\phi^0$ corresponds to the mode associated with $Z_4$ (the scaling instability), while $\phi^\perp$ spans the higher modes related to translations ($Z_\ell$, $\ell=1,2,3$). The outer correction $\psi_1$ solves a homogeneous Neumann problem, and $\psi_2$ handles the inhomogeneous boundary data.

The solution will be sought in the following Banach spaces, defined by their respective weighted norms.
\begin{align*}
X_{\phi^{0}} &:= \left\{ \phi^{0} \in L^{\infty}(B_{2R}^{+} \times(0, T)) \mid\nabla_{y}\phi^{0} \in L^{\infty}(B_{2R}^{+} \times(0, T)),\ \|\phi^{0}\|_{\sigma_1,\nu} < +\infty \right\},  \\
X_{\phi^{\perp}} &:= \left\{ \phi^{\perp} \in L^{\infty}(B_{2R}^{+} \times(0, T)) \mid\nabla_{y}\phi^{\perp} \in L^{\infty}(B_{2R}^{+} \times(0, T)),\ \|\phi^{\perp}\|_{1+\sigma_2,\nu} < +\infty \right\}, \\
X_{\psi_1} &:= \left\{ \psi_1 \in L^{\infty}(\mathbb{R}_{+}^{4} \times(0, T)) \mid\|\psi_1\|_{*}^{(1)} < +\infty \right\}, \\
X_{\psi_2} &:= \left\{ \psi_2 \in L^{\infty}(\mathbb{R}_{+}^{4} \times(0, T)) \mid \|\psi_2\|_{*}^{(2)}< +\infty \right\}.
\end{align*}
The norms referenced here are defined in \eqref{Aug17-5}, \eqref{Aug5-3} and \eqref{outer5}.

The modulation parameter $\mu(t)$ will be determined by a non-local integral equation arising from the orthogonality condition $c^0 = 0$ (to be detailed in Subsection \ref{6.3.2} ). The analysis of this equation, following the methodology of \cite{Davila-del-Pino-Wei2020}, dictates a specific decomposition of the solution. We therefore define the parameter space accordingly.

We decompose $\mu(t)$ as
$$\mu(t)=\mu_{0,\kappa}(t)+\mu_1(t),$$
where the leading-order term is defined for some $\kappa\in\mathbb{R}$ by
\begin{align}\label{Sep27-1}
\mu_{0,\kappa} := \kappa |\log T| \int_{t}^{T} \frac{1}{|\log(T-s)|^{2}}  ds, \quad t \leq T,
\end{align}
and the perturbation $\mu_1$ is assumed to lie in the space
$$
X_{\mu}:= \left\{ \mu_1 \in C^{1}([-T,T]),0 < \iota < 1 \mid \mu_1(T) = 0,\ \|\mu_1\|_{*,3-\iota} < +\infty\right\}.
$$
Here, the norm for the perturbation is given by
$$
\|f\|_{*,k} := \sup_{t \in [-T,T]} |\log(T-t)|^{k} |\dot{f}(t)|.
$$
The full parameter $\mu$ is then equipped with the norm
\begin{align}\label{Aug18-7}
\|\mu\|_F = |\kappa| + \|\mu_1\|_{*,3-\iota}.
\end{align}
The motivation for this specific decomposition and the associated norms will become clear when we analyze the governing equation for $\mu(t)$ in Subsection \ref{6.3.2}.

Writing $\xi(t)=\hat{q}+\xi^{1}(t)$, we define the space for $\xi^1$ as
$$
X_{\xi}=\left\{\xi\in C^{1}((0,T);\mathbb{R}^{4}_+)\mid \dot{\xi}(T)=0,\ \|\xi\|_{G}< +\infty\right\}
$$
where the norm is given by
\begin{align}\label{Aug18-8}
\|\xi\|_{G}=\|\xi\|_{L^{\infty}(0,T)}+\sup_{t\in(0,T)}\mu_0^{-\nu_1}(t) |\dot{\xi}(t)|
\end{align}
for some fixed $\nu_1\in(0,\nu)$.

The full solution space is the product space
$$
\mathcal{X}=X_{\phi^{0}}\times  X_{\phi^{\perp}}\times X_{\psi_1}\times X_{\psi_2}\times\mathbb{R}\times X_{\mu}\times X_{\xi}.
$$
where the $\mathbb{R}$ corresponds to the parameter $\kappa$. We will seek a fixed point within a closed ball $\mathcal{B}\subset \mathcal{X}$ defined by the bounds in \eqref{Aug17-7}.

Next, we will estimate the nonlinear terms using the appropriate norms. This ensures that the linear theories can be applied and enables the definition of a compact fixed-point map.

\subsection{Estimates of the nonlinear terms}\label{Sep27-nonlinear}
This subsection is devoted to estimating the nonlinear terms $\mathcal{G}_1$, $\mathcal{G}_2$ and $\mathcal{H}_1$, $\mathcal{H}_2$ in the appropriate dual norms. These estimates are crucial for ensuring the boundedness of the fixed-point map to be defined later. We proceed by examining each term systematically, using the support properties of the cut-off function $\eta_R$ and the asymptotic behavior of the bubble $U(y)$.

\subsubsection{Estimates for the outer problem: $\mathcal{G}_1$ and $\mathcal{G}_2$}
We begin by analyzing the terms $\mathcal{G}_1[\phi,\psi,\mu,\xi]$ and $\mathcal{G}_2[\phi,\psi,\mu,\xi]$ in the norms $\|\cdot\|_{**}^{(1)}$ and $\|\cdot\|_{**}^{(2)}$, defined in \eqref{Aug5-2} and \eqref{outer6}, respectively.

Recall that
\begin{align*}
\mathcal{G}_1\left[\phi,\psi,\mu,\xi\right](y,t)=\mu^{-3}\left[\phi\Delta_y\eta_R+2\nabla_y\phi\nabla_y\eta_R-\mu^{2}\phi\partial_t\eta_R\right]+(1-\eta_R)\mathcal{K}[\mu,\xi],
\end{align*}
where $\mathcal{K}[\mu,\xi]$ is defined in \eqref{Jul26-8}.

We proceed term by term. Due to the cut-off function $\eta_R$, the term $$\mu^{-3}\left[\phi\Delta_y\eta_R+2\nabla_y\phi\nabla_y\eta_R-\mu^{2}\phi\partial_t\eta_R\right]
$$ is supported in the region
$$\left\{(x,t)\in \mathbb{R}_{+}^{4} \times(0, T)\mid \mu_0R\leq|x-\xi(t)|\leq 2\mu_0R \right\}.$$
It follows directly that
\begin{align}\label{Aug4-1}
&\mu^{-3}\left[\phi\Delta_y\eta_R+2\nabla_y\phi\nabla_y\eta_R-\mu^{2}\phi\partial_t\eta_R\right]\notag
\\&\lesssim \mu_0^{\nu-3}R^{-2-\alpha}\chi_{\{|x-\xi(t)|\sim\mu_0R\}}\left(R^{\alpha-\sigma_1}\|\phi^0\|_{\sigma_1,\nu}+ R^{\alpha-1-\sigma_2} \|\phi^\perp\|_{1+\sigma_2,\nu}\right) \notag
\\&\quad\times \left(1+\mu_0^{\nu_1+1}R\|\xi\|_{G}+\mu_0R^{2}\|\dot{\mu}\|_{\infty}+\mu_0^{2}R\dot{R}\right)\notag
\\&\lesssim \rho_1\left(R^{\alpha-\sigma_1}\|\phi^0\|_{\sigma_1,\nu}+ R^{\alpha-1-\sigma_2}\|\phi^\perp\|_{1+\sigma_2,\nu}\right),
\end{align}
where we used the relation $R(t)=\mu_0^{-\beta}(t)$ with $\beta\in(0,1/2)$ and the norm $\|\xi\|_{G}$ is defined in \eqref{Aug18-8}.

Next, we estimate the term $(1-\eta_R)\mathcal{K}[\mu,\xi]$. Recalling from \eqref{Jul26-8} that
\begin{equation*}
\mathcal{K}[\mu,\xi]= \frac{\mu^{-2}(t)\dot{\mu}(t)\alpha_{0}(2+2y_4)}{\left(|\tilde y|^2+(1+y_4)^2\right)^2}+ \mu^{-2}(t)\nabla U(y) \cdot \dot{\xi}(t) - \mathcal{R}[\mu].
\end{equation*}
Then
\begin{align}\label{Aug4-2}
&(1-\eta_R)\left[\frac{\mu^{-2}(t)\dot{\mu}(t)\alpha_{0}(2+2y_4)}{\left(|\tilde y|^2+(1+y_4)^2\right)^2}+ \mu^{-2}(t)\nabla U(y) \cdot \dot{\xi}(t)\right]\notag
\\&\lesssim R^{-1}\mu_0^{-\nu_2}\rho_2\|\dot{\mu}\|_{\infty}+ R^{-1}\mu_0^{\nu_1-\nu_2}\rho_2\|\xi\|_{G}.
\end{align}
Recall from \eqref{Jul26-7} that
\begin{align*}
\mathcal{R}[\mu]
&= \partial_{t}\Psi_{0} - \Delta\Psi_{0} - \mathcal{E}_{0}
\\&=\alpha_{0}\frac{y \cdot \dot{\xi} - \dot{\mu}(t)}{(1+|y|^{2})^{\frac{1}{2}}}
\int_{-T}^{t} \dot{\mu}(s)k_{\zeta}(\zeta,t-s)  ds
\\&\quad + \frac{\alpha_{0}}{\mu(t)(1+|y|^{2})^{3/2}}
\int_{-T}^{t} \dot{\mu}(s) \left[-\zeta k_{\zeta\zeta}(\zeta,t-s) + k_{\zeta}(\zeta,t-s)\right] ds.
\end{align*}
Following computations similar to those in \cite[Section 6.1]{delPino-Musso-Wei-Zhou2020}, we decompose the integral over $s$ into two intervals
\begin{align*}
\int_{-T}^{t} \dot{\mu}(s)k_{\zeta}(\zeta,t-s) ds=  \left(\int_{-T}^{t-\frac{\zeta^{2}}{4}} + \int_{t-\frac{\zeta^{2}}{4}}^{t}\right)\dot{\mu}(s)k_{\zeta}(\zeta,t-s) ds.
\end{align*}
We estimate the first integral as follows.
\begin{itemize}
\item For $T-t>\frac{\zeta^{2}}{4}$, we further split this interval into
\begin{align*}
\int_{-T}^{t-\frac{\zeta^{2}}{4}}\dot{\mu}(s)k_{\zeta}(\zeta,t-s)ds
= \left(\int_{-T}^{t-(T-t)} + \int_{t-(T-t)}^{t-\frac{\zeta^{2}}{4}}\right)
\dot{\mu}(s)k_{\zeta}(\zeta,t-s)ds.
\end{align*}
Using the expression
$$
k_{\zeta}(\zeta,t-s)=-\frac{2}{\zeta^{3}} + \frac{e^{-\frac{\zeta^{2}}{4(t-s)}}}{2\zeta(t-s)} + \frac{2e^{-\frac{\zeta^{2}}{4(t-s)}}}{\zeta^{3}},
$$
and noting that $\frac{\zeta^{2}}{4(t-s)}<1$ and $T-s<2(t-s)$ for $s\in (-T, t-(T-t))$, we deduce
\begin{align}\label{Aug3-1}
&\int_{-T}^{t-(T-t)}\dot{\mu}(s)\left[-\frac{2}{\zeta^{3}} + \frac{e^{-\frac{\zeta^{2}}{4(t-s)}}}{2\zeta(t-s)} + \frac{2e^{-\frac{\zeta^{2}}{4(t-s)}}}{\zeta^{3}}\right]ds \notag
\\& \lesssim \int_{-T}^{t-(T-t)}\frac{|\dot{\mu}(s)|\zeta}{(t-s)^2}ds  \lesssim \|\dot{\mu}\|_{\infty}\int_{-T}^{t-(T-t)}\frac{1}{(T-s)^{\frac{3}{2}}}ds \lesssim \|\dot{\mu}\|_{\infty}\frac{1}{\left(T-t\right)^{\frac{1}{2}}}.
\end{align}

Similarly, for the second integral
\begin{align}\label{Aug3-2}
\int_{t-(T-t)}^{t-\frac{\zeta^{2}}{4}}\dot{\mu}(s)\left[-\frac{2}{\zeta^{3}} + \frac{e^{-\frac{\zeta^{2}}{4(t-s)}}}{2\zeta(t-s)} + \frac{2e^{-\frac{\zeta^{2}}{4(t-s)}}}{\zeta^{3}}\right]ds
&\lesssim \|\dot{\mu}\|_{\infty}\int_{t-(T-t)}^{t-\frac{\zeta^{2}}{4}}\frac{1}{(t-s)^{\frac{3}{2}}}ds \notag
\\&\lesssim  \|\dot{\mu}\|_{\infty}\left|\frac{1}{\zeta}-\frac{1}{\left(T-t\right)^{\frac{1}{2}}}\right|
\end{align}
\end{itemize}

\begin{itemize}
\item For $T-t < \frac{\zeta^{2}}{4}$, since $s < t - \frac{\zeta^{2}}{4} < t - (T - t)$, a direct estimation gives
\begin{align}\label{Aug3-3}
\int_{-T}^{t-\frac{\zeta^{2}}{4}}\frac{|\dot{\mu}(s)|\zeta}{(t-s)^2}ds
\lesssim \int_{-T}^{t-(T-t)} \frac{|\dot{\mu}(s)|\zeta}{(t-s)^2}ds ds \lesssim \|\dot{\mu}\|_{\infty}\frac{1}{\left(T-t\right)^{\frac{1}{2}}}
\end{align}
\end{itemize}

Next, we estimate the integral near $s=t$,
\begin{align}\label{Aug3-4}
\int_{t-\frac{\zeta^{2}}{4}}^{t} \dot{\mu}(s)k_{\zeta}(\zeta,t-s) ds
\lesssim \frac{1}{\zeta^{3}} \int_{t-\frac{\zeta^{2}}{4}}^{t} |\dot{\mu}(s)| ds \lesssim \|\dot{\mu}\|_{\infty} \frac{1}{\zeta}.
\end{align}
Combining estimates \eqref{Aug3-1}-\eqref{Aug3-4} and recalling that $\zeta=\mu(t)\sqrt{1+|y|^2}$, we conclude that
\begin{align}\label{Aug3-5}
\left|\alpha_{0}\frac{y \cdot \dot{\xi} - \dot{\mu}(t)}{(1+|y|^{2})^{\frac{1}{2}}}\int_{-T}^{t} \dot{\mu}(s)k_{\zeta}(\zeta,t-s)  ds\right| \lesssim \|\dot{\mu}\|_{\infty} \left[ \frac{|y\cdot \dot{\xi} + \dot{\mu}|}{\left(T-t\right)^{\frac{1}{2}}(1+|y|^{2})^{\frac{1}{2}}} + \frac{|y\cdot \dot{\xi} + \dot{\mu}|}{\mu(1+|y|^2)} \right].
\end{align}

For the term involving $-\zeta k_{\zeta\zeta}(\zeta,t-s) + k_{\zeta}(\zeta,t-s)$, note the asymptotic behaviors:
$$\left|-\zeta k_{\zeta\zeta}(\zeta,t-s) + k_{\zeta}(\zeta,t-s)\right|\sim \frac{\zeta}{(t-s)^2}\quad \text{for}~~\frac{\zeta^{2}}{4(t-s)}<1$$
and
$$\left|-\zeta k_{\zeta\zeta}(\zeta,t-s) + k_{\zeta}(\zeta,t-s)\right|\sim \frac{1}{\zeta^3}~\quad \text{for}~~\frac{\zeta^{2}}{4(t-s)}>1.$$
By analogous estimates to those above, we obtain
\begin{align}\label{Aug3-6}
 &\frac{\alpha_{0}}{\mu(t)(1+|y|^{2})^{3/2}}
\int_{-T}^{t} \dot{\mu}(s) \left[-\zeta k_{\zeta\zeta}(\zeta,t-s) + k_{\zeta}(\zeta,t-s)\right] ds \notag
\\& \lesssim \|\dot{\mu}\|_{\infty} \left[ \frac{1}{\left(T-t\right)^{\frac{1}{2}}\mu(t)(1+|y|^2)^{\frac{3}{2}}} + \frac{1}{\mu^2 (1+|y|^2)^2} \right].
\end{align}
Combining all contributions, we summarize
\begin{align}\label{Aug1-3}
\mathcal{R}[\mu] \lesssim \|\dot{\mu}\|_{\infty}
&\left[  \frac{|y\cdot \dot{\xi} + \dot{\mu}|}{\left(T-t\right)^{\frac{1}{2}}(1+|y|^{2})^{\frac{1}{2}}} + \frac{|y\cdot \dot{\xi} + \dot{\mu}|}{\mu(1+|y|^2)}\right.\notag
\\&\left.\quad+\frac{1}{\left(T-t\right)^{\frac{1}{2}}\mu(t)(1+|y|^2)^{\frac{3}{2}}} + \frac{1}{\mu^2 (1+|y|^2)^2}\right].
\end{align}
Consequently, the contribution of $(1-\eta_R)\mathcal{R}[\mu]$ is bounded by
\begin{align}\label{Aug4-3}
  (1-\eta_R)\mathcal{R}[\mu]\lesssim  \|\dot{\mu}\|_{\infty}
  &\left[\mu_0^{\nu_1-\frac{1}{2}}\|\xi\|_{G}+\mu_0^{\frac{1}{2}-\nu_2}\rho_2+\mu_0^{\frac{1}{2}}\rho_3+\mu_0^{\nu_1-1}R^{-1}\|\xi\|_{G}\right.\notag
  \\&\left.\quad+\mu_0^{1-\nu_2}\rho_2+\mu_0^{\frac{1}{2}-\nu_2}R^{-1}\rho_2+ R^{-2}\mu_0^{-\nu_2}\rho_2\right].
\end{align}

Combining the estimates from \eqref{Aug4-1}, \eqref{Aug4-2}, and \eqref{Aug4-3}, we conclude that
\begin{align}\label{Aug4-5}
 \left\|\mathcal{G}_1\right\|_{**}^{(1)}\lesssim T^{\varepsilon_0}\left(\|\phi^0\|_{\sigma,\nu}+ \|\phi^\perp\|_{1+\sigma,\nu}+\|\dot{\mu}\|_{\infty}+\|\xi\|_{G}+1\right),
\end{align}
provided the following parameter conditions hold:
\begin{align}\label{Aug18-1}
\begin{cases}
     \sigma_1-\alpha>0,\\
    1+\sigma_2-\alpha>0,\\
    \beta-\nu_2>0,\\
    \nu_1+\beta-\nu_2>0,\\
    \nu_1-\frac{1}{2}>0,\\
    \frac{1}{2}-\nu_2>0,\\
     \nu_1-1+\beta>0,\\
     2\beta-\nu_2>0,
\end{cases}
\end{align}
where $\varepsilon_0$ is a small positive number.

Next, we proceed to estimate $\mathcal{G}_2\left[\phi,\psi,\mu,\xi\right]$. Recall that
\begin{align*}
\mathcal{G}_2\left[\phi,\psi,\mu,\xi\right](\tilde{y},0,t)=&~2\mu^{-1}(1-\eta_R)U(\tilde y,0)\left(\Psi_0+\psi+Z^*\right)(\tilde x,0,t)
\\&~+\mu^{-2}\phi(\tilde{y},0,t)\frac{d\eta_{R}}{d y_{4}}(\tilde{y},0,t)+f_{1}(\tilde{x},t),
\end{align*}
where
$$f_{1}(\tilde{x},t)=-\mu^{-2}(t)\eta_{R}(y)f(\tilde{y},t).$$

By direct computations analogous to those employed in estimating $\mathcal{R}[\mu]$, see also \cite[Section 6.1]{delPino-Musso-Wei-Zhou2020}, we establish the bound
\begin{align}\label{Aug1-2}
|\Psi_0|\lesssim |\dot{\mu}|\left[\log\left(\rho^2+\mu^2\right)+1\right].
\end{align}
Substituting this into the first term of $\mathcal{G}_2$, we obtain
\begin{align*}
&2\mu^{-1}(1-\eta_R)U(\tilde y,0)\left(\Psi_0+\psi+Z^*\right)(\tilde x,0,t)
\\&\lesssim \frac{\mu_0^{\nu_2}(t)}{|\tilde{x}-\tilde{\xi}(t)|}\chi_{\{|\tilde{x}-\tilde{\xi}(t)| \geq \mu_0R\}}R^{-1}(t)\mu_0^{-\nu_2}(t)
\Big[|\log(T-t)|\|\dot{\mu}\|_{\infty}+\|Z^{*}\|_{*}
\\&\qquad+\left(\mu_0^{\nu-1}(0) R^{-\alpha}(0)+T^{\nu_2}|\log T|^{1-\nu_2}+T\right)\left(\|\psi\|_{*}^{(1)}+\|\psi\|_{*}^{(2)}\right)\Big].
\end{align*}

Moreover,
\begin{align*}
\left|\mu^{-2}\phi(\tilde{y}, 0, t) \frac{d \eta_{R}}{d y_{4}}(\tilde{y}, 0, t)\right|=0.
\end{align*}
Recall that $f(\tilde{y},t)$ corresponds to $-c(\tau)\widetilde{Z}_{0}(\tilde{y})$ under a suitable change of variables $\tau$. Combining this with the fact that $\widetilde{Z}_{0}(\tilde{y})\sim |\tilde{y}|^{-4}$, we get
$$\left|f_{1}(\tilde{x},t)\right|=\left|-\mu^{-2}(t)\eta_{R}(y)f(\tilde{y},t)\right|\lesssim \mu_0^{\nu-2}(t) R^{-1-\alpha}(t) \chi_{\{|\tilde{x} -\tilde{\xi}(t)| \leq 2\mu_0R\}}R^{\alpha-3}(t).$$

Combining these estimates, we conclude the bound for $\mathcal{G}_{2}$ in the norm $\|\cdot\|_{**}^{(2)}$, defined in \eqref{outer6},
\begin{align}\label{Aug4-5-2}
\|\mathcal{G}_{2}\|_{**}^{(2)}\lesssim T^{\varepsilon_0}\left(\|\dot{\mu}\|_{\infty}+\|\psi\|_{\infty}+\|Z^{*}\|_{\infty}+1\right)
\end{align}
provided
\begin{align}\label{Aug18-2}
\begin{cases}
     \beta-\nu_2>0,\\
     \nu-1-\nu_2+\beta(1+\alpha)>0,\\
    \alpha-3<0.
\end{cases}
\end{align}

\subsubsection{Estimates for the inner problem: $\mathcal{H}_1$ and $\mathcal{H}_2$}
We now consider the inner problem \eqref{Jul-inner-problem}. The linear theory developed in Section \ref{linear-inner} guarantees that for $\mathcal{H}_1 = \mathcal{H}_1^{0} + \mathcal{H}_1^{\perp}$ and $\mathcal{H}_2 = \mathcal{H}_2^{0} + \mathcal{H}_2^{\perp}$ satisfying
\begin{align}\label{Aug12-3}
\|\mathcal{H}_1^{0}\|_{2+\sigma_1,\nu},\ \|\mathcal{H}_1^{\perp}\|_{3+\sigma_2,\nu},~\|\mathcal{H}_2^{0}\|_{1+\sigma_1,\nu},\ \|\mathcal{H}_2^{\perp}\|_{2+\sigma_2,\nu} < +\infty,
\end{align}
there exists a solution $(\phi^{0},\phi^{\perp},c^{0},c^{\ell})$ ($\ell=1,2,3$) to the projected inner problems
\begin{equation}\label{Aug12-1}
\begin{cases}
\mu^{2} \phi^{0}_{t}=\Delta_{y} \phi^{0}+\mathcal{H}^{0}_{1}[\phi,\psi,\mu,\xi] & \text { in } ~  B_{2R}^{+} \times(0, T),  \\
-\frac{d \phi^{0}}{d y_{4}}(\tilde{y}, 0, t)=2U(\tilde{y}, 0) \phi^{0}(\tilde{y}, 0, t)+\mathcal{H}^{0}_{2}[\phi,\psi,\mu,\xi]\\
 \qquad\qquad\qquad\quad+c^{0}Z_4(\tilde{y},0)
 & \text { in }~  (\partial B_{2R}^{+}\cap\mathbb{R}^{3})\times(0, T),\\
\phi^{0}(\cdot,0) = 0 & \ \text{in }~ B_{2R}^+,
\end{cases}
\end{equation}
and
\begin{equation}\label{Aug12-2}
\begin{cases}
\mu^{2} \phi^{\perp}_{t}=\Delta_{y} \phi^{\perp}+\mathcal{H}^{\perp}_{1}[\phi,\psi,\mu,\xi] & \text { in } ~  B_{2R}^{+} \times(0, T),  \\
-\frac{d \phi^{\perp}}{d y_{4}}(\tilde{y}, 0, t)=2U(\tilde{y}, 0) \phi^{\perp}(\tilde{y}, 0, t)+\mathcal{H}^{\perp}_{2}[\phi,\psi,\mu,\xi]\\
\qquad\qquad\qquad\quad + \sum\limits_{\ell=1}^{3}c^{\ell}Z_{\ell}(\tilde{y}, 0) & \text { in }~  (\partial B_{2R}^{+}\cap\mathbb{R}^{3})\times(0, T),\\
\phi^{\perp}(\cdot,0) = 0 & \ \text{in } ~B_{2R}^+.
\end{cases}
\end{equation}
Under these conditions, the inner solution $\phi[\mathcal{H}_1,\mathcal{H}_2] = \phi^{0}[\mathcal{H}_1^{0},\mathcal{H}_2^{0}] + \phi^{\perp}[\mathcal{H}_1^{\perp},\mathcal{H}_2^{\perp}]$ with appropriate space-time decay can then be constructed to carry out the inner-outer gluing procedure. Our first step is to ensure that the boundedness conditions \eqref{Aug12-3} hold by appropriately selecting constants.

Recall that
\begin{align*}
\mathcal{H}_1[\phi,\psi,\mu,\xi](y, t)&=\mu\left[\dot{\mu}\left(\nabla_y\phi\cdot y+\phi\right)+\nabla_y\phi\cdot \dot{\xi}\right]+\mu^3\mathcal{K}[\mu,\xi],
\\\mathcal{H}_2[\phi,\psi,\mu,\xi](\tilde y,0,t)&=2\mu U(\tilde y,0)\left(\Psi_0+\psi+Z^*\right)(\tilde x,0,t).
\end{align*}
Direct computation gives
\begin{align}\label{Aug2-1}
&\left|\mu\left[\dot{\mu}\left(\nabla_y\phi\cdot y+\phi\right)+\nabla_y\phi\cdot \dot{\xi}\right]\right|\notag
\\&\lesssim \mu_0 |\dot{\mu}_0|\left[\frac{\mu_0^{\nu} }{1+|y|^{\sigma_1}}\|\phi^0\|_{\sigma_1,\nu}+\frac{\mu_0^{\nu}}{1+|y|^{1+\sigma_2}}\|\phi^{\perp}\|_{1+\sigma_2,\nu}\right] \notag
\\&\quad+\mu_0|\dot{\xi}|\left[\frac{\mu_0^{\nu}}{1+|y|^{1+\sigma_1}}\|\phi^0\|_{\sigma_1,\nu}+\frac{\mu_0^{\nu} }{1+|y|^{2+\sigma_2}}\|\phi^{\perp}\|_{1+\sigma_2,\nu}\right].
\end{align}
Using the bound for $\mathcal{R}[\mu]$ from \eqref{Aug1-3}, we estimate the second term in $\mathcal{H}_1$
\begin{align}\label{Aug2-2}
\left|\mu^3\mathcal{K}[\mu,\xi]\right|
\lesssim &~\frac{\mu_0\left|\dot{\mu}_0\right|+\mu_0|\dot{\xi}|}{1+|y|^3}
+\|\dot{\mu}\|_{\infty}\left[ \frac{\mu_0^3|y\cdot \dot{\xi} + \dot{\mu}|}{\left(T-t\right)^{\frac{1}{2}}(1+|y|^{2})^{\frac{1}{2}}} + \frac{\mu_0^2|y\cdot \dot{\xi} + \dot{\mu}|}{1+|y|^2} \right] \notag
\\&~+\|\dot{\mu}\|_{\infty}\left[ \frac{\mu_0^2}{\left(T-t\right)^{\frac{1}{2}}(1+|y|^2)^{\frac{3}{2}}} + \frac{\mu_0}{ (1+|y|^2)^2}\right].
\end{align}
Combining \eqref{Aug2-1}, \eqref{Aug2-2} and applying the relevant norms, we obtain the bound for $\mathcal{H}_1^0$
\begin{align}\label{Aug17-8}
&\|\mathcal{H}_1^0\|_{2+\sigma_1,\nu}\notag
\\&\lesssim\mu_0\|\dot{\mu}\|_{\infty}\left(R^{2}\|\phi^0\|_{\sigma_1,\nu}+R^{1+\sigma_1-\sigma_2}\|\phi^{\perp}\|_{1+\sigma_2,\nu}\right)    \notag
\\&\quad+ \mu_0^{1+\nu_1}\|\xi\|_{G} \left(R\|\phi^0\|_{\sigma_1,\nu}+R^{\sigma_1-\sigma_2}\|\phi^{\perp}\|_{1+\sigma_2,\nu}\right)
+\left(\mu_0^{1-\nu}\|\dot{\mu}\|_{\infty}+\mu_0^{1+\nu_1-\nu} \|\xi\|_{G}\right)R^{-1+\sigma_1}          \notag
\\&\quad+\|\dot{\mu}\|_{\infty}\left[ \frac{\mu_0^{3-\nu}R^{1+\sigma_1}|R\cdot \dot{\xi} + \dot{\mu}|+\mu_0^{2-\nu}R^{-1+\sigma_1}}{\left(T-t\right)^{\frac{1}{2}}} + \mu_0^{2-\nu}R^{\sigma_1}|R\cdot \dot{\xi} + \dot{\mu}|+\mu_0^{1-\nu}R^{-2+\sigma_1} \right]\notag
\\&\lesssim T^{\varepsilon_0}\left(\|\phi^0\|_{\sigma_1,\nu}+\|\phi^{\perp}\|_{1+\sigma_2,\nu}+\|\dot{\mu}\|_{\infty}+\|\xi\|_{G}+1\right),
\end{align}
provided that the following parameter conditions are satisfied
\begin{align}\label{Aug18-3}
\begin{cases}
    1-2\beta>0,\\
    1-\beta(1+\sigma_1-\sigma_2)>0,\\
    1-\nu+\beta(1-\sigma_1)>0,\\
    3+\nu_1-\nu-\beta(2+\sigma_1)-\frac{1}{2}>0,\\
    3-\nu-\beta(1+\sigma_1)-\frac{1}{2}>0,\\
    2-\nu+\beta(1-\sigma_1)-\frac{1}{2}>0,\\
    2+\nu_1-\nu-\beta(1+\sigma_1)>0,\\
    2-\nu-\beta\sigma_1>0,\\
    1-\nu+\beta(2-\sigma_1)>0.\\
\end{cases}
\end{align}

Similarly, for the $\mathcal{H}_1^{\perp}$ component
\begin{align}\label{Aug17-9}
&\|\mathcal{H}_1^{\perp}\|_{3+\sigma_2,\nu}\notag
\\&\lesssim\mu_0\|\dot{\mu}\|_{\infty}\left(R^{3+\sigma_2-\sigma_1}\|\phi^0\|_{\sigma_1,\nu}+R^2\|\phi^{\perp}\|_{1+\sigma_2,\nu}\right) \notag
\\&\quad + \mu_0^{1+\nu_1}\|\xi\|_{G} \left(R^{2+\sigma_2-\sigma_1}\|\phi^0\|_{\sigma_1,\nu}+R\|\phi^{\perp}\|_{1+\sigma_2,\nu}\right) +\left(\mu_0^{1-\nu}\|\dot{\mu}\|_{\infty}+\mu_0^{1+\nu_1-\nu}\|\xi\|_{G}\right)R^{\sigma_2} \notag
\\&\quad+\|\dot{\mu}\|_{\infty}\left[ \frac{\mu_0^{3-\nu}R^{2+\sigma_2}|R\cdot \dot{\xi} + \dot{\mu}|+\mu_0^{2-\nu}R^{\sigma_2}}{\left(T-t\right)^{\frac{1}{2}}} + \mu_0^{2-\nu}R^{1+\sigma_2}|R\cdot \dot{\xi} + \dot{\mu}|+\mu_0^{1-\nu}R^{-1+\sigma_2} \right] \notag
\\&\lesssim T^{\varepsilon_0}\left(\|\phi^0\|_{\sigma_1,\nu}+\|\phi^{\perp}\|_{1+\sigma_2,\nu}+\|\dot{\mu}\|_{\infty}+\|\xi\|_{G}+1\right),
\end{align}
provided
\begin{align}\label{Aug18-4}
\begin{cases}
    1-\beta(3+\sigma_2-\sigma_1)>0,\\
    1-2\beta>0,\\
    1-\nu-\beta\sigma_2>0,\\
    3+\nu_1-\nu-\beta(3+\sigma_2)-\frac{1}{2}>0,\\
    3-\nu-\beta(2+\sigma_2)-\frac{1}{2}>0,\\
    2-\nu-\beta\sigma_2-\frac{1}{2}>0,\\
    2+\nu_1-\nu-\beta(2+\sigma_2)>0,\\
    2-\nu-\beta(1+\sigma_2)>0,\\
    1-\nu+\beta(1-\sigma_2)>0.\\
\end{cases}
\end{align}

For $\mathcal{H}_2$, using estimate \eqref{Aug1-2} for $\Psi_0$, we get
\begin{align}\label{Aug2-3}
&\left|2\mu U(\tilde y,0)\left(\Psi_0+\psi+Z^*\right)(\tilde x,0,t)\right|\notag
\\&\lesssim \frac{\mu_0(t)}{1+|\tilde{y}|^2}\Big[|\dot{\mu}_0|\left(\left|\log \mu_0\right|+\log(1+|\tilde{y}|\right)   \notag
\\&\qquad\qquad\quad +\left(\mu_0^{\nu-1}(0) R^{-\alpha}(0)+T^{\nu_2}|\log T|^{1-\nu_2}+T\right)\left(\|\psi\|_{*}^{(1)}+\|\psi\|_{*}^{(2)}\right)+\|Z^{*}\|_*\Big],
\end{align}
where
\begin{align}\label{Sep22-1}
\|Z^{*}\|_*=\|Z^*\|_{L^\infty(\mathbb{R}_+^4\times(0,T)}+\|\nabla Z^*\|_{L^\infty(\mathbb{R}_+^4\times(0,T)}.
\end{align}
Additionally, from the definition of $f(\tilde y,t)$, there holds
$$\left|f(\tilde y,t)\right|\lesssim \frac{\mu_0^{\nu}(t)}{1+|\tilde{y}|^4}.$$

Consequently, we derive the estimate for $\mathcal{H}_2^0$
\begin{align}\label{Aug17-10}
\|\mathcal{H}_2^0\|_{1+\sigma_1,\nu}
&\lesssim \mu_0^{1-\nu}\|\dot{\mu}\|_{\infty}R^{-1+\sigma_1}|\log(T-t)|   \notag
\\&\quad +\mu_0^{1-\nu}R^{-1+\sigma_1}\left(\mu_0^{\nu-1}(0) R^{-\alpha}(0)+T^{\nu_2}|\log T|^{1-\nu_2}+T\right)\left(\|\psi\|_{*}^{(1)}+\|\psi\|_{*}^{(2)}\right)    \notag
\\&\quad +\mu_0^{1-\nu}R^{-1+\sigma_1}\|Z^{*}\|_*+R^{-3+\sigma_1}    \notag
\\&\lesssim T^{\varepsilon_0}\left(\|\dot{\mu}\|_{\infty}+\|\psi\|_{*}^{(1)}+\|\psi\|_{*}^{(2)}+\|Z^{*}\|_*+1\right),
\end{align}
provided the conditions
\begin{align}\label{Aug18-5}
\begin{cases}
1-\nu+\beta(1-\sigma_1)>0, \\
\beta(3-\sigma_1)>0.
\end{cases}
\end{align}

Similarly, for $\mathcal{H}_2^{\perp}$
\begin{align}\label{Aug17-11}
\|\mathcal{H}_2^{\perp}\|_{2+\sigma_2,\nu}
&\lesssim \mu_0^{1-\nu}|\dot{\mu}_0|R^{\sigma_2}|\log(T-t)|   \notag
\\&\quad +\mu_0^{1-\nu}R^{\sigma_2} \left(\mu_0^{\nu-1}(0) R^{-\alpha}(0)+T^{\nu_2}|\log T|^{1-\nu_2}+T\right)\left(\|\psi\|_{*}^{(1)}+\|\psi\|_{*}^{(2)}\right)   \notag
\\&\quad+\mu_0^{1-\nu}R^{\sigma_2}\|Z^{*}\|_*+R^{-2+\sigma_2}    \notag
\\&\lesssim T^{\varepsilon_0}\left(\|\dot{\mu}\|_{\infty}+\|\psi\|_{*}^{(1)}+\|\psi\|_{*}^{(2)}+\|Z^{*}\|_*+1\right),
\end{align}
provided
\begin{align}\label{Aug18-6}
\begin{cases}
1-\nu-\beta\sigma_2>0, \\
\beta(2-\sigma_2)>0.
\end{cases}
\end{align}

\subsection{The parameter problems}\label{Sep26-parameters}

The parameters $\mu(t)$ and $\xi(t)$ are not prescribed in advance but are determined as part of the solution. Their evolution is governed by the conditions $c^0 = 0$ and $c^\ell = 0$. Specifically, to complete the system defined by \eqref{Aug12-1} and \eqref{Aug12-2}, we must adjust the parameters $\mu(t)$ and $\xi(t)$ such that
\begin{align}\label{Oct10-1}
c^0[\mu,\xi,\psi,Z^*]=0,\quad c^{\ell}[\mu,\xi,\psi,Z^*]=0,\quad \ell=1,2,3,\ \forall ~t\in(0,T),
\end{align}
where the coefficients are given by
\begin{align}\label{Aug13-1}
  c^0[\mu,\xi,\psi,Z^*]= \frac{\int_{B_{2R}^{+}}\mathcal{H}_1^0(y,t)Z_{4}(y) dy+\int_{\partial{B}_{2R}^{+}\cap{\mathbb{R}}^{3}}\mathcal{H}_2^0(\tilde{y},t)\widetilde{Z}_{4}(\tilde{y}) d\tilde{y}}{\int_{\partial{B}_{2R}^{+}\cap{\mathbb{R}}^{3}} |\widetilde{Z}_{4}|^{2}d\tilde{y}},
\end{align}
and
\begin{align}\label{Aug13-2}
  c^{\ell}[\mu,\xi,\psi,Z^*]= \frac{\int_{B_{2R}^{+}}\mathcal{H}_1^{\perp}(y,t)Z_{\ell}(y) dy+\int_{\partial{B}_{2R}^{+}\cap{\mathbb{R}}^{3}}\mathcal{H}_2^{\perp}(\tilde{y},t)\widetilde{Z}_{\ell}(\tilde{y}) d\tilde{y}}{\int_{\partial{B}_{2R}^{+}\cap{\mathbb{R}}^{3}} |\widetilde{Z}_{\ell}|^{2}d\tilde{y}} \quad \text{for}~\ell=1,2,3.
\end{align}

The equation for $\xi(t)$ controls the location of the blow-up point by counteracting translation instabilities, while the non-local integro-differential equation for $\mu(t)$ determines the blow-up rate by balancing the dominant scaling instability. The subsequent analysis translates conditions \eqref{Oct10-1} into solvable equations for $\dot{\xi}$ and $\dot{\mu}$.

\subsubsection{The reduced equation for $\xi(t)$}

We begin by considering the reduced equation for the parameter $\xi(t)$. Observing that condition \eqref{Aug13-2} is equivalent to
$$\int_{B_{2R}^{+}}\mathcal{H}_1^{\perp}(y,t)Z_{\ell}(y) dy+\int_{\partial{B}_{2R}^{+}\cap{\mathbb{R}}^{3}}\mathcal{H}_2^{\perp}(\tilde{y},t)\widetilde{Z}_{\ell}(\tilde{y}) d\tilde{y}=0,\quad \ell=1,2,3,$$
which implies
\begin{equation}\label{Aug15-1}
\dot{\xi}_{\ell} = b_{\ell}[\mu,\xi,\phi,\psi,Z^{*}],
\end{equation}
where
\begin{align*}
b_{\ell}[\mu,\xi,\phi,\psi,Z^{*}] = \frac{\int_{B_{2R}^{+}}\left[\mathcal{H}_1^{\perp}(y,t)-\mu\partial_{y_l}U(y)\dot{\xi}_l\right]Z_{\ell}(y) dy+\int_{\partial{B}_{2R}^{+}\cap{\mathbb{R}}^{3}}\mathcal{H}_2^{\perp}(\tilde{y},t)\widetilde{Z}_{\ell}(\tilde{y}) d\tilde{y}}{\mu(t)\int_{B_{2R}^{+}}\left|Z_{\ell}(y)\right|^2 dy}.
\end{align*}

A direct estimate yields
\begin{align*}
&\left| \int_{B_{2R}^{+}}\mu\left[\dot{\mu}\left(\nabla_y\phi\cdot y+\phi\right)+\nabla_y\phi\cdot \dot{\xi}\right]Z_{\ell}(y) dy\right|
\\&\lesssim \mu_0^{1+\nu}|\dot{\mu}_0|\left(R^{1-\sigma_1}\|\phi^0\|_{\sigma_1,\nu}+\|\phi^{\perp}\|_{1+\sigma_2,\nu}\right)+\mu_0^{1+\nu}|\dot{\xi}|\left(\|\phi^0\|_{\sigma_1,\nu}+\|\phi^{\perp}\|_{1+\sigma_2,\nu}\right).
\end{align*}
Symmetry considerations play a crucial role. From \eqref{Jul26-7} and \eqref{Jul26-8}, the term $\mu^3 \mathcal{K}[\mu,\xi] - \mu \partial_{y_\ell} U(y) \dot{\xi}_\ell$ is even in $y_\ell$. Since $Z_\ell(y)$ is odd in $y_\ell$, it follows that
\begin{align*}
\int_{B_{2R}^{+}}\left[\mu^3 \mathcal{K}[\mu,\xi] - \mu \partial_{y_l} U(y) \dot{\xi}_l\right]Z_{\ell}(y) dy=0.
\end{align*}
Similarly, by \eqref{Jul26-5}, $\Psi_0$ is even in $y_\ell$. Furthermore, recalling that $f(\tilde{y},t)$ corresponds to $-c(\tau)\widetilde{Z}_{0}(\tilde{y})$ under a suitable change of variables $\tau$, and noting that $\widetilde{Z}_{0}(\tilde{y})$ is even in $\tilde{y}$, we obtain
\begin{align*}
 \int_{\partial{B}_{2R}^{+}\cap{\mathbb{R}}^{3}}\left[2\mu U(\tilde y,0)\Psi_0(\tilde x,0,t)+f(\tilde y,t)\right]\widetilde{Z}_{\ell}(\tilde{y}) d\tilde{y}=0.
\end{align*}
For the remaining boundary term involving $\psi$ and $Z^*$, we expand
\begin{align*}
 &\mu \int_{\partial{B}_{2R}^{+}\cap{\mathbb{R}}^{3}}U(\tilde y,0)\left(\psi+Z^*\right)(\tilde x,0,t)\widetilde{Z}_{\ell}(\tilde{y}) d\tilde{y}
 \\&=\mu \int_{\partial{B}_{2R}^{+}\cap{\mathbb{R}}^{3}}U(\tilde y,0)\left[\psi(\tilde \xi,0,t)+Z^*(\tilde \xi,0,t)+\mu y_l\left(\frac{\pp \psi}{\pp x_l}+\frac{\pp Z^*}{\pp x_l}\right)(\bar \xi,0,t)\right]\widetilde{Z}_{\ell}(\tilde{y}) d\tilde{y},
\end{align*}
where $\bar\xi=\tilde{\xi}+\theta(\tilde{x}-\tilde{\xi})$ for some $\theta \in [0,1]$. Using the evenness of $U(\tilde y,0)$ in $\tilde{y}$ and the oddness of $\widetilde{Z}_{\ell}(\tilde{y})$ in $\tilde{y}$ for $\ell=1,2,3$, we estimate
\begin{align*}
 &\left|2\mu \int_{\partial{B}_{2R}^{+}\cap{\mathbb{R}}^{3}}U(\tilde y,0)\left(\psi+Z^*\right)(\tilde x,0,t)\widetilde{Z}_{\ell}(\tilde{y}) d\tilde{y}\right|
 \\&\lesssim \mu_0^{2}(t)\left[\left(\mu_0^{\nu-1}(0) R^{-\alpha}(0)+T^{\nu_2}|\log T|^{1-\nu_2}+T\right)\left(\|\psi\|_{*}^{(1)}+\|\psi\|_{*}^{(2)}\right)+\|Z^{*}\|_*\right].
\end{align*}

Thus, the size of $b_{\ell}[\mu,\xi,\phi,\psi,Z^{*}]$ is controlled by
\begin{align}\label{Aug15-2-2}
&|b_{\ell}[\mu,\xi,\phi,\psi,Z^{*}]|     \notag
\\&\lesssim \mu_0^{\nu}|\dot{\mu}_0|\left(R^{1-\sigma_1}\|\phi^0\|_{\sigma_1,\nu}+\|\phi^{\perp}\|_{1+\sigma_2,\nu}\right)+\mu_0^{\nu}|\dot{\xi}|\left(\|\phi^0\|_{\sigma_1,\nu}+\|\phi^{\perp}\|_{1+\sigma_2,\nu}\right)  \notag
\\&\quad+\mu_0(t)\left[\left(\mu_0^{\nu-1}(0) R^{-\alpha}(0)+T^{\nu_2}|\log T|^{1-\nu_2}+T\right)\left(\|\psi\|_{*}^{(1)}+\|\psi\|_{*}^{(2)}\right)+\|Z^{*}\|_*\right].
\end{align}

We now analyze the reduced problem \eqref{Aug15-1}, which defines solution operators $\Xi_{\ell}$ ($\ell=1,2,3$) for the components $\xi_{\ell}$. Writing
\begin{equation}\label{Aug15-2}
\Xi=(\Xi_{1},\Xi_{2},\Xi_{3}) ,
\end{equation}
we express $\xi(t) = \hat{q} + \xi^{1}(t)$, where $\hat{q} = (q_1, q_2, q_3, 0)$ is a prescribed point in $\mathbb{R}_+^4$. We seek solutions for $\xi^{1}(t)$ within the function space equipped with the norm
$$
\|\xi\|_{G}=\|\xi\|_{L^{\infty}(0,T)}+\sup_{t\in(0,T)}\mu_0^{-\nu_1}(t)|\dot{\xi}(t)|
$$
for some fixed $0<\nu_1<\nu<1$. Integrating \eqref{Aug15-1} gives the bound
$$
|\xi_{\ell}(t)|\leq|q_{\ell}|+\|b_{\ell}[\lambda,\xi,\phi,\psi,Z^{*}]\|_{L^{\infty}(0,T)}\,(T-t).
$$
Consequently, we have
\begin{equation}\label{Aug15-3}
\|\Xi_{\ell}\|_{G}\leq|q_{\ell}|+(T-t)^{-\nu_1}\|b_{\ell}[\mu,\xi,\phi,\psi,Z^{*}]\|_{L^{\infty}(0,T)}.
\end{equation}

Combining \eqref{Aug15-2-2} and \eqref{Aug15-3}, we conclude that for some constant $C>0$,
\begin{align}\label{Aug17-12}
\|\Xi_{\ell}\|_{G}\leq&~|q_{\ell}|+C(T-t)^{-\nu_1}\mu_0^{\nu}|\dot{\mu}_0|\left(R^{1-\sigma_1}\|\phi^0\|_{\sigma_1,\nu}+\|\phi^{\perp}\|_{1+\sigma_2,\nu}\right)  \notag
\\&+C(T-t)^{-\nu_1}\left[\mu_0^{\nu}|\dot{\xi}|\left(\|\phi^0\|_{\sigma_1,\nu}+\|\phi^{\perp}\|_{1+\sigma_2,\nu}\right)+\mu_0\|Z^{*}\|_*\right]  \notag
\\&+C(T-t)^{-\nu_1}\mu_0\left(\mu_0^{\nu-1}(0) R^{-\alpha}(0)+T^{\nu_2}|\log T|^{1-\nu_2}+T\right)\left(\|\psi\|_{*}^{(1)}+\|\psi\|_{*}^{(2)}\right).
\end{align}

\subsubsection{The integro-differential equation for $\mu(t)$}\label{6.3.2}

The reduced problem for the scaling parameter $\mu(t)$ is fundamentally analogous to that treated in \cite{Davila-del-Pino-Wei2020}. Therefore, we adopt the strategy and logical framework developed therein.

Direct computations show that condition \eqref{Aug13-1} yields a non-local integro-differential equation
\begin{equation}\label{Aug15-4}
\int_{-T}^{t}\frac{\dot{\mu}(s)}{t-s}\Gamma\left(\frac{\mu^{2}(t)}{t-s}\right)ds + \mathbf{c}_{0}\dot{\mu} = a[\mu,\xi,\psi,Z^{*}](t) + \mathbf{a}_{r}[\mu,\xi,\phi,\psi,Z^{*}](t),
\end{equation}
where $\mathbf{c}_{0} = \alpha_0\int_{\mathbb{R}^4_+}\frac{2+2y_4}{\left(1+|y|^2\right)^2} Z_{4}(y) d y$,
\begin{equation}\label{Aug15-5}
a[\mu,\xi,\psi,Z^{*}] = -\int_{\partial{B}_{2R}^{+}\cap{\mathbb{R}}^{3}} 2 U(\tilde y,0)\left(\psi+Z^*\right)(\tilde x,0,t)Z_4(\tilde{y},0) d\tilde y,
\end{equation}
and the remainder term $\mathbf{a}_{r}[\mu,\xi,\phi,\psi,Z^{*}](t)$ is of lower order and satisfies the bound
\begin{align*}
|\mathbf{a}_{r}[\mu,\xi,\phi,\psi,Z^{*}](t)|
&=\Bigg|-2\alpha_{0}\mu(t)\dot{\mu}(t)\int_{B_{2R}^+}\frac{Z_{4}(y)}{1+|y|^{2}}
\left(\int_{-T}^{t}\frac{\dot{\mu}(s)}{t-s}\Upsilon K_{\Upsilon}(\Upsilon)ds\right)dy
\\&\qquad-\int_{B_{2R}^+}\left[\dot{\mu}\left(\nabla_y\phi\cdot y+\phi\right)+\nabla_y\phi\cdot \dot{\xi}\right]Z_{4}(y)dy+\cdots\Bigg|
\\&\lesssim \mu_0|\dot{\mu}_0|+ \mu_0^{\nu}|\dot{\mu}_0|\left(R^{2-\sigma_1}\|\phi^0\|_{\sigma_1,\nu}+R^{1-\sigma_2}\|\phi^{\perp}\|_{1+\sigma_2,\nu}\right)
\\&\quad+\mu_0^{\nu}|\dot{\xi}|\left(R^{1-\sigma_1}\|\phi^0\|_{\sigma_1,\nu}+\|\phi^{\perp}\|_{1+\sigma_2,\nu}\right)
+|\dot{\mu}_0|R^{-1}.
\end{align*}

To solve $\mu(t)$, we introduce the following functional norms.
\begin{itemize}
    \item $\|\cdot\|_{\Theta,l}$-norm: for functions $f \in C([-T,T];\mathbb{R})$ with $f(T) = 0$,
    $$
    \|f\|_{\Theta,l} := \sup_{t \in [0,T]} \frac{|\log(T-t)|^{l}}{(T-t)^{\Theta}} |f(t)|,
   $$
    where $\Theta \in (0,1)$ and $l \in \mathbb{R}$.

    \item $[\cdot]_{\gamma,m,l}$-seminorm: for $g \in C([-T,T];\mathbb{R})$ with $g(T) = 0$,
    $$
    [g]_{\gamma,m,l} := \sup_{I_{T}} \frac{|\log(T-t)|^{l}}{(T-t)^{m}(t-s)^{\gamma}} |g(t) - g(s)|,
   $$
    where $I_{T} = \left\{ -T \leq s \leq t \leq T \mid t - s \leq \frac{1}{10}(T - t) \right\}$, $0 < \gamma < 1$, $m > 0$ and $l \in \mathbb{R}$.
\end{itemize}

We define the principal operator as
\begin{align}\label{Aug17-3}
\mathcal{B}_0[\mu](t) := \int_{-T}^{t} \frac{\dot{\mu}(s)}{t-s} \Gamma \left( \frac{\mu^{2}(t)}{t-s} \right) ds + \mathbf{c}_{0} \dot{\mu},
\end{align}
and express the orthogonality coefficient as
\begin{align*}
c^{0}[\mathcal{H}^0_1,\mathcal{H}^0_2] = \frac{\mathcal{B}_0[\mu] - \left( a[\mu,\xi,\psi, Z^{*}] + \mathbf{a}_{r}[\mu,\xi,\phi,\psi, Z^{*}] \right)}{\int_{\partial{B}_{2R}^{+}\cap{\mathbb{R}}^{3}} \left|\widetilde{Z}_{4}\right|^{2}d\tilde{y}}.
\end{align*}

The solvability of $\mu(t)$ relies on the following key proposition from \cite{Davila-del-Pino-Wei2020}.
\begin{prop}\label{prop7.1}
Let $\omega,\Theta \in (0,\frac{1}{2})$, $\gamma \in (0,1)$, $m \leq \Theta - \gamma$ and $l \in \mathbb{R}$. Suppose $a(t)$ satisfies $a(T) < 0$ with $1/C \leq |a(T)| \leq C$ for some constant $C > 1$, and
\begin{align}\label{Aug17-1}
T^{\Theta}|\log T|^{1+c-l}\|a(\cdot)-a(T)\|_{\Theta,l-1} + [a]_{\gamma,m,l-1} \leq C_{1} \end{align}
for some $c > 0$. Then there exist operators $\mathcal{P}$ and $\mathcal{R}_0$ such that $\mu = \mathcal{P}[a] : [-T,T] \to \mathbb{R}$ satisfies
\begin{align}\label{Aug17-2}
\mathcal{B}_0[\mu](t) = a(t) + \mathcal{R}_0[a](t)
\end{align}
with the remainder estimate
$$
|\mathcal{R}_0[a](t)| \lesssim \left( T^{\frac{1}{2}+c} + T^{\Theta}\frac{\log|\log T|}{|\log T|}\|a(\cdot)-a(T)\|_{\Theta,l-1} + [a]_{\gamma,m,l-1} \right) \frac{(T-t)^{m+(1+\omega)\gamma}}{|\log(T-t)|^{l}}.
$$
\end{prop}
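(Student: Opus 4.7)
The plan is to solve equation \eqref{Aug17-2} perturbatively around an explicit leading-order profile, following the framework developed in \cite{Davila-del-Pino-Wei2020}. The key observation is that, using the dichotomy $\Gamma(\tau)=c_*+O(\tau)$ for $\tau<1$ and $\Gamma(\tau)=O(1/\tau)$ for $\tau>1$ established in Section \ref{3}, the operator $\mathcal{B}_0[\mu]$ can be reduced at leading order to
\begin{equation*}
\mathcal{B}_0[\mu](t)=c_*\int_{-T}^{t-\mu^2(t)}\frac{\dot{\mu}(s)}{t-s}\,ds+\mathbf{c}_0\dot{\mu}(t)+\mathcal{E}[\mu](t),
\end{equation*}
where $\mathcal{E}[\mu]$ collects the tail contributions from the sub-leading part of $\Gamma$ and contributes only lower-order terms of size $T^{c'}$ for some $c'>0$. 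The first task is therefore to find $\kappa$ so that the leading profile $\mu_{0,\kappa}$ from \eqref{Sep27-1} satisfies $\mathcal{B}_0[\mu_{0,\kappa}](T)=a(T)$; using the identity already displayed in Section \ref{3},
\begin{equation*}
c_*\int_{-T}^{t-\mu^2(t)}\frac{\dot{\mu}(s)}{t-s}\,ds=c_*\Big[\int_{-T}^{t}\frac{\dot{\mu}(s)}{T-s}\,ds-\dot{\mu}(t)\log(T-t)\Big]+o(1),
\end{equation*}
and the fact that $\log(T-t)\tfrac{d}{dt}\beta(t)=-\tfrac{d}{dt}[\log^2(T-t)\dot{\mu}(t)]$ vanishes for the ansatz $\dot{\mu}_{0,\kappa}(t)=-\kappa|\log T|/|\log(T-t)|^2$, this pins down $\kappa$ uniquely (with $\kappa>0$ guaranteed by $a(T)<0$ and $1/C\le|a(T)|\le C$).

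The second task is to write $\mu=\mu_{0,\kappa}+\mu_1$ and linearize. The correction $\mu_1$ satisfies a Volterra-type integro-differential equation
\begin{equation*}
c_*\int_{-T}^{t-\mu_{0,\kappa}^2(t)}\frac{\dot{\mu}_1(s)}{t-s}\,ds+\mathbf{c}_0\dot{\mu}_1(t)=\bigl[a(t)-a(T)\bigr]+\mathcal{N}[\mu_1](t),
\end{equation*}
where $\mathcal{N}[\mu_1]$ gathers the nonlinear defect coming from replacing $\mu^2$ by $\mu_{0,\kappa}^2$ inside $\Gamma$, as well as the higher-order pieces of $\Gamma$. Differentiating in $t$ and exploiting the $\log(T-t)$-identity above yields an explicit one-step inversion of the leading linear operator of the form
\begin{equation*}
\dot{\mu}_1(t)\approx-\frac{1}{\log^2(T-t)}\int_{-T}^{t}\frac{g(s)}{T-s}\,ds,
\end{equation*}
with $g$ built from $a(t)-a(T)$. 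Inserting the hypotheses \eqref{Aug17-1} and integrating produces the desired bound for $\mu_1$ in the norm $\|\cdot\|_{*,3-\iota}$. The remainder operator $\mathcal{R}_0[a]$ absorbs $\mathcal{N}[\mu_1]$, $\mathcal{E}[\mu_{0,\kappa}+\mu_1]$, and the boundary contributions at $s=-T$, each of which is estimated against the right-hand side of the desired bound by a direct application of the weighted norm hypotheses.

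The main technical obstacle is proving the sharp remainder estimate on $\mathcal{R}_0[a]$, specifically obtaining the factor $(T-t)^{m+(1+\omega)\gamma}/|\log(T-t)|^l$. This requires a careful multi-scale decomposition of every integral into three temporal regions — $[-T,t-(T-t)]$, $[t-(T-t),t-\mu_{0,\kappa}^2(t)]$, and $[t-\mu_{0,\kappa}^2(t),t]$ — and then exploiting the $\gamma$-Hölder regularity of $a$ to gain the factor $(t-s)^\gamma$ on the innermost window, which after integration against the kernel $(t-s)^{-1}$ yields exactly the $(T-t)^\gamma$ improvement. The constraint $m\le\Theta-\gamma$ is precisely what makes this gain compatible with the Hölder seminorm hypothesis. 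Once these estimates are in place, the operators $\mathcal{P}$ and $\mathcal{R}_0$ are constructed as the inverse of a suitably contractive map in a weighted Banach space of functions vanishing at $T$, via the Banach fixed-point theorem. Since the argument reproduces, line by line, the construction in \cite[Section 7]{Davila-del-Pino-Wei2020} once the asymptotics of $\Gamma$ and the leading profile $\mu_{0,\kappa}$ have been identified in Section \ref{3}, no genuinely new phenomenon arises from the half-space geometry at this stage.
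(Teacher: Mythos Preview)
Your proposal is essentially correct and aligns with the paper's treatment: the paper does not give its own proof of this proposition but simply quotes it as a key result from \cite{Davila-del-Pino-Wei2020}, and your outline correctly identifies both the structure of that argument (extraction of the leading profile $\mu_{0,\kappa}$, linearization for $\mu_1$, and multi-scale estimation of the remainder) and the fact that no new ingredient is needed beyond the asymptotics of $\Gamma$ already computed in Section~\ref{3}. Your closing remark that the construction reproduces \cite{Davila-del-Pino-Wei2020} line by line is precisely how the paper handles it.
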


Recall from \eqref{Aug17-3} that the operator $\mathcal{B}_0$ satisfies the asymptotic expansion
$$
\mathcal{B}_0[\mu] = \int_{-T}^{t - \mu^{2}_{0}(t)} \frac{\dot{\mu}(s)}{t-s}  ds + O(\|\dot{\mu}\|_{\infty}).
$$
Proposition \ref{prop7.1} establishes an approximate inverse operator $\mathcal{P}$ for $\mathcal{B}_0$. Specifically, if the source term $a$ satisfies \eqref{Aug17-1}, then the function $\mu := \mathcal{P}[a]$ satisfies
$$
\mathcal{B}_0[\mu] = a + \mathcal{R}_0[a] \quad \text{in }~ [-T,T],
$$
where $\mathcal{R}_0[a]$ is a negligible remainder. Moreover, the analysis in \cite{Davila-del-Pino-Wei2020} yields the decomposition
\begin{align}\label{Aug17-4}
\mathcal{P}[a] = \mu_{0,\kappa} + \mathcal{P}_1[a],
\end{align}
in which $\mu_{0,\kappa}$ is given by \eqref{Sep27-1} with $\kappa = \kappa[a] \in \mathbb{R}$,
and the perturbation $\mu_1=\mathcal{P}_1[a]$ satisfies the norm estimate
\begin{align}\label{Aug17-13}
\|\mu_1\|_{*,3-\iota} \lesssim |\log T|^{1-\iota} \log^{2}(|\log T|),\quad 0 < \iota < 1.
\end{align}

This analysis thereby clarifies the motivation behind the functional framework introduced in Subsection \ref{Sep26-space} for the parameter $\mu$.

\subsection{Proof of Theorem \ref{theorem1}: the fixed-point argument}

The proof of Theorem \ref{theorem1}, building on the preparatory work of Subsections \ref{Sep26-space}, \ref{Sep27-nonlinear} and \ref{Sep26-parameters}, rests on a fixed-point formulation of the inner-outer gluing system. The first step is to summarize the coupled system to be solved, along with the necessary constraints on the parameters.

\subsubsection{The inner-outer gluing system and parameter constraints}

Following the analysis of the modulation parameters in Subsection \ref{Sep26-parameters}, we transform the inner-outer problems \eqref{Jul-inner-problem} and \eqref{Jul-outer-problem} into finding a tuple $(\psi,\phi^{0},\phi^{\perp},\mu,\xi)$ that satisfy the following \textit{inner-outer gluing system}

\begin{align}\label{Aug16-3}
\begin{cases}
\partial_t\psi_{1}=\Delta_{x} \psi_1+\mathcal{G}_{1}[\phi,\psi,\mu,\xi] & \text { in }~ \mathbb{R}_{+}^{4} \times(0, T), \\
 -\frac{d \psi_1}{d x_{4}}\left(\tilde{x}, 0, t\right)=0 & \text { in }~ \mathbb{R}^{3} \times(0, T),
\end{cases}
\end{align}

\begin{align}\label{Aug16-3-2}
\begin{cases}
\partial_t\psi_{2}=\Delta_{x} \psi_2 & \text { in }~ \mathbb{R}_{+}^{4} \times(0, T), \\
 -\frac{d \psi_2}{d x_{4}}\left(\tilde{x}, 0, t\right)=\mathcal{G}_{2}[\phi,\psi,\mu,\xi] & \text { in }~ \mathbb{R}^{3} \times(0, T),
\end{cases}
\end{align}

\begin{align}\label{Aug16-4}
\begin{cases}
\mu^{2} \phi^{0}_{t}=\Delta_{y} \phi^{0}+\mathcal{H}^{0}_{1}[\phi,\psi,\mu,\xi] & \text { in } ~  B_{2R}^{+} \times(0, T),  \\
-\frac{d \phi^{0}}{d y_{4}}(\tilde{y}, 0, t)=2U(\tilde{y}, 0) \phi^{0}(\tilde{y}, 0, t)+\mathcal{H}^{0}_{2}[\phi,\psi,\mu,\xi]\\
 \qquad\qquad\qquad\quad+c^{0}[\mathcal{H}^0_1,\mathcal{H}^0_2]Z_4(\tilde{y},0)
 & \text { in }~  (\partial B_{2R}^{+}\cap\mathbb{R}^{3})\times(0, T),\\
\phi^{0}(\cdot,0) = 0 & \ \text{in }~ B_{2R(0)}^+,
\end{cases}
\end{align}

\begin{align}\label{Aug16-5}
\begin{cases}
\mu^{2} \phi^{\perp}_{t}=\Delta_{y} \phi^{\perp}+\mathcal{H}^{\perp}_{1}[\phi,\psi,\mu,\xi] & \text { in } ~  B_{2R}^{+} \times(0, T),  \\
-\frac{d \phi^{\perp}}{d y_{4}}(\tilde{y}, 0, t)=2U(\tilde{y}, 0) \phi^{\perp}(\tilde{y}, 0, t)+\mathcal{H}^{\perp}_{2}[\phi,\psi,\mu,\xi]\\
\qquad\qquad\qquad\quad + \sum\limits_{\ell=1}^{3}c^{\ell}[\mathcal{H}^{\perp}_1,\mathcal{H}^{\perp}_2]Z_{\ell}(\tilde{y}, 0) & \text { in }~  (\partial B_{2R}^{+}\cap\mathbb{R}^{3})\times(0, T),\\
\phi^{\perp}(\cdot,0) = 0 & \ \text{in } ~B_{2R(0)}^+,
\end{cases}
\end{align}

\begin{align}\label{Aug16-6}
    c^{0}[\mathcal{H}^0_1,\mathcal{H}^0_2](t) = 0 \quad \text{for all} \ t \in (0,T),
\end{align}
\begin{align}\label{Aug16-7}
   c^{\ell}[\mathcal{H}^{\perp}_1,\mathcal{H}^{\perp}_2](t) = 0 \quad \text{for all} \ t \in (0,T),~\ell=1,2,3.
\end{align}
Here, $\mathcal{G}_1$ and $\mathcal{G}_2$ are defined in Section \ref{25-inner-outer}, while $\mathcal{H}_i^{0}$ and $\mathcal{H}_i^{\perp}$ ($i=1,2$) denote the projections of $\mathcal{H}_i$ onto different modes. It follows directly that if $(\psi,\phi^{0},\phi^{\perp},\mu,\xi)$ satisfies the system \eqref{Aug16-3}-\eqref{Aug16-7}, then setting $\psi = \psi_1 + \psi_2$ and $\phi = \phi^{0} + \phi^{\perp}$ yields solutions to the original inner-outer problems \eqref{Jul-inner-problem} and \eqref{Jul-outer-problem}, thereby constructing the desired blow-up solution.

We now reformulate this system as a fixed-point problem in the product space $$
\mathcal{X}=X_{\phi^{0}}\times  X_{\phi^{\perp}}\times X_{\psi_1}\times X_{\psi_2}\times\mathbb{R}\times X_{\mu}\times X_{\xi},$$
which is defined in Subsection \ref{Sep26-space}.

Before constructing the fixed-point operator, we summarize the key parameters and function space norms that have been introduced throughout our analysis.
\begin{itemize}
    \item The inner domain radius is $R(t)=\mu_0^{-\beta}(t)$ with $\beta\in(0,1/2)$.

    \item For the inner problem, the solution $\phi^{0}$ (mode $0$) is measured in the norm $\|\cdot\|_{\sigma_1,\nu}$, while $\phi^{\perp}$ (higher modes) is measured in $\|\cdot\|_{1+\sigma_2,\nu}$ as defined in \eqref{Aug17-5}, with $\sigma_1,\sigma_2,\nu\in(0,1)$.

    \item For the outer problem, the solution $\psi_1$ is measured in $\|\cdot\|_{*}^{(1)}$ (see \eqref{Aug5-3}) and its right-hand side in $\|\cdot\|_{**}^{(1)}$ (see \eqref{Aug5-2}). Similarly, $\psi_2$ and its right-hand side are measured in $\|\cdot\|_{*}^{(2)}$ and $\|\cdot\|_{**}^{(2)}$ (see \eqref{outer5}, \eqref{outer6}), with parameters $\nu,\alpha,\nu_{2},\gamma\in(0,1)$.

    \item For the parameters in Proposition \ref{prop7.1}, to apply Proposition \ref{prop7.1} for the scaling parameter $\mu(t)$, we set
    $$
    \Theta = \nu - 1 + \alpha\beta, \quad
    m = \nu - 2 - \gamma + \beta(2 + \alpha), \quad
    l < 1 + 2m,
    $$
    and require $\beta>\frac{1-\omega}{2}$ to ensure $m+(1+\omega)\gamma>\Theta$, where  $\omega\in(0,1/2)$ describes the remainder $\mathcal{R}_{\omega}$.
\end{itemize}

To establish the desired estimates for the inner-outer system \eqref{Aug16-3}-\eqref{Aug16-5}, we rely on the computations in Subsection \ref{Sep27-nonlinear}, which require the constraints \eqref{Aug18-1}, \eqref{Aug18-2}, \eqref{Aug18-3}, \eqref{Aug18-4} and \eqref{Aug18-5}.
These constraints can be satisfied, as exemplified by the choice
$$
\beta =0.26, \quad
\alpha=0.97,\quad  \sigma_1= 0.98, \quad  \nu_1=0.8,\quad
\nu=0.98, \quad \sigma_2=
\nu_{2}=0.005.
$$

This establishes the self-consistency of the framework and the validity of the estimates for a suitable range of parameters.

\subsubsection{Construction of the fixed-point operator}

We aim to solve the inner-outer gluing system within a closed ball $\mathcal{B}\subset \mathcal{X}$, where elements $(\phi^{0},\phi^{\perp},\psi_1,\psi_2,\kappa,\mu_{1},\xi^{1})$ satisfy
\begin{align}\label{Aug17-7}
\begin{cases}
\|\phi^{0}\|_{\sigma_1,\nu}+\|\phi^{\perp}\|_{1+\sigma_2,\nu}\leq 1, \\
\|\psi_1\|_{*}^{(1)}+\|\psi_2\|_{*}^{(2)}\leq 1, \\
|\kappa-\kappa_{0}|\leq|\log T|^{-1/2}, \\
\|\mu_{1}\|_{\ast,3-\iota}\leq C|\log T|^{1-\iota}\log^{2}(|\log T|), \\
\|\xi\|_{G}\leq 1,
\end{cases}
\end{align}
with $\kappa_{0}=Z^{*}_{0}(0)$ and $C > 0$ is a large and fixed constant.

We now construct the fixed-point operator $\mathcal{F}: \mathcal{B} \subset \mathcal{X} \to \mathcal{X}$ for the entire inner-outer gluing system. The main strategy is to apply the linear solution operators from the previous sections to define a map whose fixed point satisfies the nonlinear system \eqref{Aug16-3}--\eqref{Aug16-7}. Specifically, for a given tuple of approximate solutions and parameters, we
\begin{enumerate}
    \item solve the inner problems \eqref{Aug16-4} and \eqref{Aug16-5} for $\phi^0$ and $\phi^\perp$ using the linear theory in Proposition \ref{proposition4.2},
    \item solve the outer problems \eqref{Aug16-3} and \eqref{Aug16-3-2} for $\psi_1$ and $\psi_2$ using the linear theories in Propositions \ref{25Aug-prop5.1} and \ref{25-prop5.5},
    \item update the parameters $\mu$ and $\xi$ by solving the reduced equations \eqref{Aug15-4} and  \eqref{Aug15-1} derived from the orthogonality conditions.
\end{enumerate}
This procedure defines the components of our fixed-point operator $\mathcal{F}:\mathcal{B} \rightarrow \mathcal{X}$ as follows:
\begin{align}\label{Aug17-6}
 \mathcal{F}(v)=(\mathcal{F}_{\phi^{0}}(v),\mathcal{F}_{\phi^{\perp}}(v),\mathcal{F}_{\psi_1}(v),\mathcal{F}_{\psi_2}(v),\mathcal{F}_{\kappa}(v),\mathcal{F}_{\mu_{1}}(v),\mathcal{F}_{\xi}(v)),
\end{align}
where
\begin{align*}
&\mathcal{F}_{\phi^{0}}(\phi^{0},\phi^{\perp},\psi_1,\psi_2,\kappa,\mu_{1},\xi^{1})
\\&= \mathcal{T}_{0}(\mathcal{H}^{0}_1[\mu,\xi,\psi_1,\psi_2,Z^{*}],\mathcal{H}^{0}_2[\mu,\xi,\psi_1,\psi_2,Z^{*}]+c^{0}[\mathcal{H}^0_1,\mathcal{H}^0_2]Z_4 ),
\\&\mathcal{F}_{\phi^{\perp}}(\phi^{0},\phi^{\perp},\psi_1,\psi_2,\kappa,\mu_{1},\xi^{1})
\\&= \mathcal{T}_{\perp}\left(\mathcal{H}_1^{\perp}[\mu,\xi,\psi_1,\psi_2,Z^{*}],\mathcal{H}_2^{\perp}[\mu,\xi,\psi_1,\psi_2,Z^{*}]+\sum\limits_{\ell=1}^{3}c^{\ell}[\mathcal{H}^{\perp}_1,\mathcal{H}^{\perp}_2]Z_{\ell}\right),
\end{align*}
\begin{align*}
\mathcal{F}_{\psi_1}(\phi^{0},\phi^{\perp},\psi_1,\psi_2,\kappa,\mu_{1},\xi^{1})& = \mathcal{T}_{\psi_1}\left(\mathcal{G}_1(\phi^{0}+\phi^{\perp},\psi_1,\psi_2,Z^{*},\mu,\xi)\right),
\\
\mathcal{F}_{\psi_2}(\phi^{0},\phi^{\perp},\psi_1,\psi_2,\kappa,\mu_{1},\xi^{1}) &= \mathcal{T}_{\psi_2}\left(\mathcal{G}_2(\phi^{0}+\phi^{\perp},\psi_1,\psi_2,Z^{*},\mu,\xi)\right) ,
\\
\mathcal{F}_{\kappa}(\phi^{0},\phi^{\perp},\psi_1,\psi_2,\kappa,\mu_{1},\xi^{1}) &= \kappa\left[a^{0}[\mu,\xi,\psi_1,\psi_2,Z^{*}]\right] ,
\\
\mathcal{F}_{\mu_{1}}(\phi^{0},\phi^{\perp},\psi_1,\psi_2,\kappa,\mu_{1},\xi^{1})& = \mathcal{P}_{1}\left[a^{0}[\mu,\xi,\psi_1,\psi_2,Z^{*}]\right] ,
\\
\mathcal{F}_{\xi}(\phi^{0},\phi^{\perp},\psi_1,\psi_2,\kappa,\mu_{1},\xi^{1}) &= \Xi(\phi^{0},\phi^{\perp},\psi_1,\psi_2,\mu,\xi).
\end{align*}
Here, $\mathcal{T}_{0}$ and $\mathcal{T}_{\perp}$ are the solution operators for the inner problems (Proposition \ref{proposition4.2}), $\mathcal{T}_{\psi_1}$ and $\mathcal{T}_{\psi_2}$ solve the outer problems (Propositions \ref{25Aug-prop5.1} and \ref{25-prop5.5}), while $\kappa[a]$, $\mathcal{P}_{1}$, and $\Xi$ are the parameter-updating operators (which determine the new values of $\mu$ and $\xi$ from the orthogonality conditions) defined in Proposition \ref{prop7.1}, \eqref{Aug17-4}, and \eqref{Aug15-2}, respectively.

A fixed point of $\mathcal{F}$, i.e., a point $v$ such that $v=\mathcal{F}(v)$, corresponds to a solution of the coupled system \eqref{Aug16-3}-\eqref{Aug16-7}.
We now verify that the operator $\mathcal{F}$ defined in \eqref{Aug17-6} with components
\begin{align}\label{Aug17-14}
\mathcal{F} = \left( \mathcal{F}_{\phi^{0}}, \mathcal{F}_{\phi^{\perp}}, \mathcal{F}_{\psi_1}, \mathcal{F}_{\psi_2}, \mathcal{F}_{\kappa}, \mathcal{F}_{\mu_{1}}, \mathcal{F}_{\xi} \right)
\end{align}
satisfies the conditions of Schauder's fixed-point theorem in the closed ball $\mathcal{B}$ defined by \eqref{Aug17-7}.

Combining the estimates \eqref{Aug4-5}, \eqref{Aug4-5-2}, \eqref{Aug17-8}, \eqref{Aug17-9}, \eqref{Aug17-10}, \eqref{Aug17-11}, \eqref{Aug17-12} and \eqref{Aug17-13} with Propositions \ref{proposition4.2}, \ref{25Aug-prop5.1}, \ref{25-prop5.5}, and \ref{prop7.1}, we obtain for any $(\phi^{0},\phi^{\perp},\psi_1,\psi_2,\kappa,\mu_{1},\xi^{1}) \in \mathcal{B}$,
\begin{align}\label{Aug17-15}
\begin{cases}
\|\mathcal{F}_{\phi^{0}}(\phi^{0},\phi^{\perp},\psi_1,\psi_2,\kappa,\mu_{1},\xi^{1})\|_{\sigma_1,\nu}\leq CT^{\epsilon},\\
\|\mathcal{F}_{\phi^{\perp}}(\phi^{0},\phi^{\perp},\psi_1,\psi_2,\kappa,\mu_{1},\xi^{1})\|_{1+\sigma_2,\nu}\leq CT^{\epsilon},\\
\|\mathcal{F}_{\psi_1}(\phi^{0},\phi^{\perp},\psi_1,\psi_2,\kappa,\mu_{1},\xi^{1})\|_{*}^{(1)}\leq CT^{\epsilon},\\
\|\mathcal{F}_{\psi_2}(\phi^{0},\phi^{\perp},\psi_1,\psi_2,\kappa,\mu_{1},\xi^{1})\|_{*}^{(2)}\leq CT^{\epsilon},\\
|\mathcal{F}_{\kappa}(\phi^{0},\phi^{\perp},\psi_1,\psi_2,\kappa,\mu_{1},\xi^{1})-\kappa_{0}|\leq C|\log T|^{-1},\\
\|\mathcal{F}_{\mu_{1}}(\phi^{0},\phi^{\perp},\psi_1,\psi_2,\kappa,\mu_{1},\xi^{1})\|_{\ast,3-\epsilon}\leq C|\log T|^{1-\epsilon}\log^{2}(|\log T|),\\
\|\mathcal{F}_{\xi}(\phi^{0},\phi^{\perp},\psi_1,\psi_2,\kappa,\mu_{1},\xi^{1})\|_{G}\leq CT^{\epsilon},
\end{cases}
\end{align}
where $C>0$ is independent of $T$, and $\epsilon>0$ is sufficiently small. This establishes that $\mathcal{F}$ maps $\mathcal{B}$ into itself.

The compactness of $\mathcal{F}$ follows from continuity and the Arzel$\grave{a}$-Ascoli theorem. Indeed, for parameters $\beta,\alpha,\sigma_1,\sigma_2,\nu,\nu_1,\nu_2$ slightly perturbed while maintaining \eqref{Aug18-1}, \eqref{Aug18-2}, \eqref{Aug18-3}, \eqref{Aug18-4} and \eqref{Aug18-5}, the estimates \eqref{Aug17-15} persist with norms adapted to the new parameters, while $\mathcal{B}$ remains unchanged.  More precisely, for fixed $\nu^{\prime},\sigma_1^{\prime}$ close to $\nu,\sigma_1$, one may show that if $(\phi^{0},\phi^{\perp},\psi_1,\psi_2,\kappa,\mu_{1},\xi^{1})\in\mathcal{B}$, then
$$
\|\mathcal{F}_{\phi^{0}}(\phi^{0},\phi^{\perp},\psi_1,\psi_2,\kappa,\mu_{1},\xi^{1})\|_{\sigma_1^{\prime},\nu^{\prime}}\leq CT^{\epsilon^{\prime}}.
$$

Moreover, for $\nu^{\prime}>\nu$ satisfying
$$
\nu^{\prime} - \beta\left(2 - \frac{\sigma_1^{\prime}}{2}\right) > \nu - \beta\left(2 - \frac{\sigma_1}{2}\right),
$$
the embedding $\|\cdot\|_{\sigma_1^{\prime},\nu^{\prime}} \hookrightarrow \|\cdot\|_{\sigma_1,\nu}$ is compact. Since any bounded sequence in the $\|\cdot\|_{\sigma_1^{\prime},\nu^{\prime}}$-norm admits a subsequence converging in the $\|\cdot\|_{\sigma_1,\nu}$-norm, a standard diagonal argument using Arzelà-Ascoli theorem establishes the compactness of $\mathcal{F}$. Similar reasoning applies to the remaining components.

\subsubsection{Completion of the proof}
Since $\mathcal{B}$ is a nonempty, closed, bounded, and convex subset of $\mathcal{X}$, and $\mathcal{F}: \mathcal{B} \to \mathcal{B}$ is compact, Schauder's fixed-point theorem guarantees the existence of a fixed point $v^* = \mathcal{F}(v^*)$. This corresponds to a solution of the system \eqref{Aug16-3}-\eqref{Aug16-7}, which yields the desired blow-up solution for the case $k = 1$ via the decompositions $\psi = \psi_1 + \psi_2$ and $\phi = \phi^0 + \phi^\perp$.

The multi-bubble case follows an analogous construction. We consider the modified ansatz
$$
u(x,t) = u^{*}(x,t) + \sum_{j=1}^{k} \mu_{j}^{-1}(t) \eta_{R(t)}(y_{j}) \phi_{j}(y_{j},t) + Z^{*}(x,t) + \psi(x,t),~y_{j} = \frac{x - \xi_{j}(t)}{\mu_{j}(t)},
$$
where the leading profile is given by
$$
u^{*}(x,t) = \sum_{j=1}^{k} U_{\mu_{j}(t),\xi_{j}(t)} + \Psi_{0j}(x,t)
$$
with $\Psi_{0j}$ defined as in \eqref{Jul26-5}  under the replacements $\mu$ to $\mu_{j}$, $\xi$ to $\xi_{j}$.
This leads to one outer problem and $k$ inner problems with estimates identical to the single-bubble case. The corresponding system of fixed-point problems can be solved in the same manner, and we omit the details.

\section*{Acknowledgements}
J.C. Wei is supported by National Key R\&D Program of China 2022YFA1005602, and Hong Kong General Research Fund “New frontiers in singularity formations of nonlinear partial differential equations”. Y. Zheng is supported by NSF of China (No. 12171355), he is very grateful to Dr. Qidi Zhang for useful discussions.

\bigskip


\bibliographystyle{plain}

\end{document}